\let\originalleft\left
\let\originalright\right
\renewcommand{\left}{\mathopen{}\mathclose\bgroup\originalleft}
\renewcommand{\right}{\aftergroup\egroup\originalright}
\newcommand{\bwedge}[1]{\raisebox{0.2ex}{${\textstyle \bigwedge}$}
 \ensuremath{^{\raisebox{-0.2ex}{${\scriptstyle #1}$}}\,}}
\newcommand{\smin}{\,\raisebox{0.06em}{${\scriptstyle \in}$}\,}
\newcommand{\smwedge}{{\scriptstyle \;\wedge\;}}
\newcommand{\ordinal}[1]{[$\;\!#1\,$]}
\newcommand{\R}{\mathbb{R}}
\newcommand{\Z}{\mathbb{Z}}
\newcommand{\N}{\mathbb{N}}
\newcommand{\NN}{\mathbb{N}_0}
\newcommand{\I}{\infty}
\newcommand{\BIGOP}[1]{\mathop{\mathchoice
{\raise-0.22em\hbox{\huge $#1$}}
{\raise-0.05em\hbox{\Large $#1$}}{\hbox{\large $#1$}}{#1}}}
\newcommand{\bigtimes}{\BIGOP{\times}}
\def\clap#1{\hbox to 0pt{\hss#1\hss}}
\def\mathclap{\mathpalette\mathclapinternal}
\def\mathclapinternal#1#2{%
\clap{$\mathsurround=0pt#1{#2}$}%
}
\theoremstyle{plain}
\newtheorem{theorem}{Theorem}[section]
\newtheorem{prop}[theorem]{Proposition}
\newtheorem{corollary}[theorem]{Corollary}
\newtheorem{definition}[theorem]{Definition}
\newtheorem{my_equation}{Equation}
\theoremstyle{remark}
\newtheorem*{remark}{Remark}
\newtheorem{example}{Example}
\title{A Lie infinity-Algebra of Hamiltonian Forms in n-plectic Geometry }
\author{Mirco Richter }
\date {\today}
\thanks{email: \href{mailto:mirco.richter@email.de}{\tt mirco.richter@email.de}}
\begin{document}

\begin{abstract}
We propose a new definition of so called Hamiltonian forms in n-plectic 
geometry and show that they have a non-trivial Lie $\I$-algebra structure.
\end{abstract}
\maketitle
\section{N-Plectic Manifold} Basically \textit{n-plectic} is just another term for
what was once called \textit{multisymplectic} (See \cite{CR}). It generalizes the idea of 
symplectic geometry to manifolds with a distinguished closed and nondegenerate differential form 
of tensor degree higher than two.

A redefinition of the latter was necessary because multisymplectic now referees to
a special kind of vector bundle, designed to have an n-plectic total space 
as well as a valid Darboux theorem.
\begin{definition} For any $n \in \N$, an \textbf{n-plectic manifold} $\left(M,\omega\right)$
is a smooth manifold $M$, together with a differential form $\omega \in \Omega^{n+1}M$,
such that $\omega$ is \textbf{closed} and the map
\begin{equation}
i_{\left(\cdot\right)}\omega_m :T_mM \to \bwedge{n}T^*_mM \; ;\; X \mapsto i_X \omega_m
\end{equation}
is injective for all $m \in M$. In that case we call $\omega$ the \textbf{n-plectic form} of $M$. 
\end{definition}
At a first sight this might look quite similar to the definition of a symplectic manifold, but 
in fact it is a huge generalization. As it turns out for all cases of $n$
intermediate between symplectic forms $(n = 1)$ and volume forms $(n = dim(M)-1)$, there 
is no Darboux theorem without the assumption of additional structure 
\cite{GIM} and according to this $n$-plectic geometry behave in a manner having very 
little in common with symplectic geometry where the Darboux theorem is such a central 
organizing fact \cite{AM}. 
\begin{remark}
In \cite{FO1}, Forger et al. looked in more detail on particular classes of $n$-plectic 
forms and derived conditions under which a generalized Darboux theorem can be expected.
In particular a \textbf{multisymplectic} structure should be a \textbf{fiber bundle} of rank $N$ 
over an $n$-dimensional manifold equipped with a closed and non-degenerate 
$(n+1)$-form $\omega$ defined on the total space $P$ which is $(n-1)$-\textit{horizontal} 
and admits an involutive and isotropic vector subbundle of the vertical bundle $VP$ 
of $P$ of codimension $N$ and dimension $Nn+1$. Under these conditions a generalized Darboux's theorem assures the existence of special 
local coordinates $$x_\mu ,\;\; q^j ,\;\; p_j^\mu ,\;\; p$$ of $P$, called 
\textbf{Darboux coordinates}, in which $\omega$ adopt the form
\begin{equation}\label{multisym_omega}
 \omega~=~dq^j \smwedge dp_j^\mu \smwedge d^{n} x_\mu^{} -
          dp \smwedge d^{n} x.
\end{equation}
In particular the total space of every multisymplectic fiber bundle over some 
n-dimensional base is an $n$-plectic manifold and in this sense '$n$-plectic' is a 
generalization of multisymplectic. 
\end{remark}
Not much is said about morphisms and categories in $n$-plectic geometry so far. 
Even in symplectic geometry there is not a well accepted concept of an
appropriate category yet. A naive but obvious first definition would be the following:
\begin{definition} An \textbf{n-plectic morphism} is a smooth map $f:M\to N$,
such that $(M,\omega_M)$ and $(N,\omega_N)$ are $n$-plectic manifolds and $f$ is
subject to the condition $$\omega_M = f^*\omega_N.$$ If $f$ is a diffeomorphism in
addition, such that the inverse is an $n$-plectic morphism, then $f$ is called an 
\textbf{n-plectomorphism}.
\end{definition}
\section{The Lie $\I$-Algebra of Hamiltonian Forms} 
We define a Lie $\I$-algebra on any $n$-plectic manifold, 
different from the one in \cite{CR}. After a short introduction, 
we suggest a new kind of Hamiltonian 
forms and exhibit their rich and non-trivial
Lie $\I$-algebra structure. We derive explicit expressions
for the bilinear and the trilinear bracket and define the higher
operators inductively.

Since we make extensive use use differential calculus, a short introduction is given in
appendix (\ref{AP_2}). Moreover we have to deal a lot with graded vector spaces and 
sign factors and according to a better readable text we use our own sign
symbols as defined in (\ref{AP_2}).

\subsection{Lie $\I$-Algebras} On the structure level Lie $\I$-algebras generalize 
(differential graded) Lie-algebras to a setting where the Jacobi identity isn't satisfied 
any more, but holds 'up to higher homotopies' only. For more on this topic and how these
algebras are related to the homotopy theory of (co)chain complexes, see for example \cite{LV}. 

A Lie $\I$-algebra can be defined in many different ways \cite{LV}, but the one that
works best for us is its 'graded symmetric, many bracket' version:
\begin{definition} A \textbf{Lie $\I$-algebra} $\left(V,(D_k)_{k\in \N}\right)$
is a graded vector space $V$, together with a sequence $(D_k)_{k\in \N}$ of
graded symmetric, $k$-multilinear morphisms $D_k : \bigtimes^k V \to V$, 
homogeneous of of degree $-1$, such that the \textbf{strong homotopy Jacobi} equation in
dimension $n$
\begin{equation}\label{sh_Jacobi}
\sum_{i+j=n+1} \left(\smashoperator[r]{\sum_{s\in Sh(j,k-j)}}
e\left(s;v_1,\ldots,v_n\right)D_i\left(D_j \left(
v_{s_1}, \ldots, v_{s_j} \right), v_{s_{j+1}}, \ldots, v_{s_n}\right)\right) = 0
\end{equation}
is satisfied for any integer $n \in \N$ and any vectors $v_1,\ldots,v_n \in V$
\footnote{$Sh(p,q)$ is the set of shuffle permutations. See (\ref{AP_2})}. 
\end{definition} 
In particular Lie $\I$-algebras generalizes ordinary Lie algebras, if the grading is chosen 
right:
\begin{example}[Lie Algebra] Every Lie algebra $\left(V,[\cdot,\cdot]\right)$ is a  
Lie $\I$-algebra if we consider $V$ as concentrated in degree one and define $D_k=0$ for any $k \neq 2$ as well as $D_2(\cdot,\cdot):=[\cdot,\cdot]$.
\end{example}
It is beyond the scope of this paper to deal with morphisms or $\I$-morphisms of 
Lie $\I$-algebras. For more about that look for example at \cite{LV}.  
\subsection{Hamiltonian Forms}
We know from symplectic geometry that the non degenerate symplectic 2-form $\omega$ 
gives rise to a well defined pairing 
\begin{equation}\label{fake_fundamental_1}
i_X \omega = df
\end{equation}
between functions $f$ and vector fields $X$ and that this is the origin of the 
Poisson bracket for smooth functions on a symplectic manifold.

In attempt to define something similar in a general n-plectic setting, we 
have to take the following into account: 

For higher n-plectic forms the pairing is capable to exhibit a much 
richer inner structure, since it makes sense for differential forms and multivector 
fields in a range of tensor degrees. However the kernel of $\omega$ is  potentially 
non-trivial on multivector fields of degrees greater than one and consequently the 
association 
\begin{center}
multivector fields $\Leftrightarrow$ differential forms
\end{center}
is not unique in either direction. 

Moreover the associative product and the Jacobi identity of the Poisson bracket depend on 
properties that can't be expected in a general n-plecic setting. If 
the Poisson struture has to be replaced by a more general Lie $\I$-algebra, 
a combination of the strong homotopy Jacobi equation (\ref{sh_Jacobi}) 
and the fundamental pairing (\ref{fake_fundamental_1}) leads to the equation
$$
i_Y\omega = 
-\textstyle\sum^{i>1}_{i+j=k+1} \left(\sum_{s\in Sh(j,k-j)}
D_i\left(D_j\left(f_{s_1}, \ldots, f_{s_j}\right), 
f_{s_{j+1}}, \ldots, f_{s_k}\right)\right)\; .
$$
To ensure the existence of a multivector field solution $Y$, a \textit{second fundamental 
pairing} like
\begin{equation}\label{fake_fundamental_2}
i_Y \omega = f
\end{equation}
is required to hold in addition to the first pairing in (\ref{fake_fundamental_1}).

Fortunately as we will see in (\ref{kernel_prop}) the second pairing then also takes care of the ambiguity inherent in the first pairing due to a possible non trivial kernel of $\omega$.  
Consequently we propose both pairings [(\ref{fake_fundamental_1}) and 
(\ref{fake_fundamental_2})] to have multivector field solutions for a given form.

\begin{remark}Maybe strange at a first sight this is already true in symplectic geometry.
In fact if $\eta$ is the Poisson bivector field associated to the symplectic form 
$\omega$, then the second equation has a solution for any function $f$ since 
$$
i_{\left(f\cdot \eta\right)}\omega =f.
$$ 
\end{remark}
In conclusion, we propose the following new definition of what 
should be called Hamiltonian in $n$-plectic geometry:
\begin{definition} 
A multivector field $X$ on an n-plectic manifold $\left(M,\omega\right)$ is called 
\textbf{semi-Hamiltonian} if there is a differential form $f\in \Omega M$ such that the 
\textbf{first fundamental equation}
\begin{equation}\label{fundamental_1}
i_X \omega=-df
\end{equation}
is satisfied. Conversely, a differential form $f$ on $M$ is
called \textbf{semi-Hamiltonian} if there exists a multivector field such that 
(\ref{fundamental_1}) holds.

In addition a multivector field $Y$ is called \textbf{Hamiltonian} if there exists a 
semi-Hamiltonian form $f$ on $M$ with 
\begin{equation}\label{fundamental_2}
i_Y \omega=-f.
\end{equation}
Conversely, a semi-Hamilton form $f$ is called \textbf{Hamiltonian} if there is a 
multivector field $X$ such that (\ref{fundamental_2}) holds.
\end{definition}
The sign in equation (\ref{fundamental_1}) is necessary for consistency of
our Lie $\I$-algebra structure as we will see later on. The second sign
in equation (\ref{fake_fundamental_2}) is a convention we propose for fancy.

We write $H(M)$ for the set of Hamiltonian forms
on an $n$-plectic manifold $(M,\omega)$. 
Since both fundamental equations are always satisfied by the zero form 
and any element of the kernel of $\omega$ the set $H(M)$ is not empty. 

If a multivector field satisfies one of the 
fundamental equation for a given Hamiltonian form $f$, we say that
it is \textbf{associated} to $f$. We exclusively use designators like $X$ for 
multivector fields satisfying the first equation (i.e. semi-Hamiltonian multivector 
fields) and designators like $Y$ for multivector fields satisfying the second equation 
(i.e. Hamiltonian multivector fields) .
\begin{remark}On $1$-plectic (symplectic) manifolds, this just rephrases the
common definition of Hamiltonian vector fields and functions.  
\end{remark}
On multisymplectic fiber bundles this is equivalent to the definition
of so called Poisson forms. 
\begin{example}
Let $(P\to M,\omega)$ be a multisymplectic fiber bundle and $f \in \Omega(P)$ a 
semi-Hamiltonian form such that $\ker(\omega)\subset \ker(f)$ 
(These forms are called Poisson forms in \cite{FPR}). 
Then there exist a multivector field $Y$ with 
$i_Y\omega = -f$.\footnote{This was shown in \cite{FPR2} only 
for the so called \textit{multiphase space} of a vector bundle, but since the 
proof just requires the local form (\ref{multisym_omega}) of $\omega$ and the existence 
of the Euler vector field, it holds for any multisymplectic fiber bundle.}
\end{example}
In symplectic geometry brackets are defined in terms of associated multivector fields, but
in a general n-plectic setting this association is not necessarily well defined.
To handle the inherent ambiguity, it is required that the 
kernel of $\omega$ is part of the kernel of any Hamiltonian form (as first observed 
in \cite{FPR}). 
We call this the \textbf{kernel property} and as the following
proposition shows it is a consequence of the second pairing:
\begin{prop}\label{kernel_prop}
Assume that $f\in H(M)$ is a Hamiltonian form on an n-plectic manifold 
$\left(M,\omega\right)$. Then
\begin{equation}
\ker(\omega) \subset \ker(f).
\end{equation}
If $Z$ and $Z'$ are semi-Hamiltonian (resp. Hamiltonian) multivector fields associated 
to $f$, then their difference $Z-Z'$ is an element of the kernel of $\omega$ 
and the contractions $i_Zg$ and $i_{Z'}g$ are equal for any Hamiltonian form $g\in H(M)$.
\end{prop}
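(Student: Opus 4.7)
The plan is to exploit the second fundamental equation $i_{Y}\omega=-f$ in order to transfer the vanishing of $i_{K}\omega$ (for $K\in\ker(\omega)$) into the vanishing of $i_{K}f$. I would fix a multivector field $Y$ realizing this pairing, which exists because $f$ is Hamiltonian, and take an arbitrary $K\in\ker(\omega)$ of pure polyvector degree $k$; the general case then follows by linearity. The key identity is the multiplicativity of interior multiplication, $i_{K\wedge Y}\omega=i_{Y}(i_{K}\omega)$ (with the sign convention of appendix \ref{AP_2}), combined with the graded commutativity $K\wedge Y=(-1)^{k\,|Y|}Y\wedge K$ of the exterior algebra of multivectors. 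The first expression gives $i_{K\wedge Y}\omega=i_{Y}(i_{K}\omega)=0$, while rewriting the left hand side with the factors swapped yields $i_{K\wedge Y}\omega=\pm\,i_{K}(i_{Y}\omega)=\mp\,i_{K}f$, so $i_{K}f=0$.

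For the second part I would simply observe that both fundamental equations are linear in the associated multivector field. Hence if $Z$ and $Z'$ are both semi-Hamiltonian for $f$, then $i_{Z-Z'}\omega=i_{Z}\omega-i_{Z'}\omega=-df+df=0$, and analogously if both are Hamiltonian, then $i_{Z-Z'}\omega=-f+f=0$. In either case $Z-Z'\in\ker(\omega)$. Applying the first part of the proposition to an arbitrary Hamiltonian form $g\in H(M)$ yields $\ker(\omega)\subset\ker(g)$, and in particular $i_{Z-Z'}g=0$, i.e. $i_{Z}g=i_{Z'}g$, which is the asserted independence of the choice of associated multivector field.

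The only delicate step is the sign bookkeeping in the interior-multiplication identity, since $K$ and $Y$ are multivectors of different polyvector degrees and the appendix fixes a specific convention for $i_{K\wedge Y}$ in terms of iterated contractions; one has to verify that the sign from $K\wedge Y=(-1)^{k|Y|}Y\wedge K$ is compatible with the sign relating $i_{K\wedge Y}$ and $i_{Y\wedge K}$, so that the resulting equation $\mp\,i_{K}f=0$ really forces $i_{K}f=0$ rather than collapsing to a trivial identity. Once this convention check is made, the rest is linear algebra and both statements of the proposition follow at once.
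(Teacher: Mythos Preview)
Your proposal is correct and follows essentially the same route as the paper: the paper writes $i_\xi f = i_\xi i_Y\omega = \pm i_Y i_\xi\omega = 0$ for the first part and uses linearity plus the kernel property of $g$ for the second, exactly as you do. Your detour through $i_{K\wedge Y}\omega$ is just an explicit justification of the sign $\pm$ that the paper leaves implicit; note also that your closing worry is unfounded, since an equation of the form $\mp\, i_K f = 0$ forces $i_K f = 0$ regardless of which sign appears.
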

\begin{proof} The first part is an implication of the second fundamental pairing. Assume $\xi \in \ker(\omega)$. Then there is a Hamiltonian multivector field 
$Y$ with $i_\xi f = i_\xi i_Y\omega =\pm i_Y i_\xi \omega=0$. For the second part compute
$0=f-f=i_Z\omega - i_{Z'}\omega= i_{\left(Z-Z'\right)}\omega$ in case $Z$ and $Z'$
are Hamiltonian as well as 
$0=df-df=i_Z\omega - i_{Z'}\omega= i_{\left(Z-Z'\right)}\omega$ in the semi-Hamiltonian 
situation. Finally we get $i_Zg-i_{Z'}g=i_{(Z-Z')}g=0$ from the kernel property of $g$.
\end{proof}
As we will see in the next sections, this forces our multilinear operators given 
in terms of associated multivector fields to be well defined. The reason is that the contraction of a Hamiltonian form along multivector fields which only 
differ in elements of the kernel of $\omega$ are equal.

Next we examine algebraic structures on Hamiltonian forms. Immediate
from the linearity of the fundamental equations is the following proposition.
\begin{prop}\label{vector_space}
Let $\left(M,\omega\right)$ be an n-plectic manifold. The set of 
Hamiltonian forms is a $\NN$-graded vector space with respect to the tensor grading.
In addition the tensor degree of every (non-trivial) Hamiltonian form 
$f\in H(M)$ is bounded by
$$
0\leq |\,f\,| \leq n+1\;.
$$
\end{prop}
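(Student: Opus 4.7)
The plan is to derive both assertions directly from the linearity of the two fundamental equations (\ref{fundamental_1}) and (\ref{fundamental_2}), combined with the observation that interior multiplication by a homogeneous multivector field is homogeneous on forms.

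First I would establish that $H(M)$ is a linear subspace of $\Omega M$. Since both fundamental equations are linear in the form and in the associated multivector field, if $f,g\in H(M)$ admit associated semi-Hamiltonian fields $X_f,X_g$ and Hamiltonian fields $Y_f,Y_g$, then for any scalars $\lambda,\mu$ one has
\begin{equation*}
i_{\lambda X_f+\mu X_g}\omega=-d(\lambda f+\mu g),\qquad i_{\lambda Y_f+\mu Y_g}\omega=-(\lambda f+\mu g),
\end{equation*}
so $\lambda f+\mu g\in H(M)$. Hence $H(M)$ is closed under the vector space operations of $\Omega M$.

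Next I would install the grading by decomposing into pure tensor degrees. For $f\in H(M)$ with associated multivector fields $X$ and $Y$, write $f=\sum_k f_k$, $X=\sum_p X_p$, $Y=\sum_q Y_q$ with $|f_k|=k$, $|X_p|=p$, $|Y_q|=q$. Contraction by a homogeneous $p$-vector field shifts form degree by $-p$, and $d$ shifts by $+1$, so matching tensor degrees on both sides of (\ref{fundamental_1}) and (\ref{fundamental_2}) gives, for each $k$,
\begin{equation*}
i_{X_{n-k}}\omega=-df_k\quad\text{and}\quad i_{Y_{n+1-k}}\omega=-f_k.
\end{equation*}
Thus every homogeneous component $f_k$ is itself Hamiltonian (with explicit associated fields), and therefore $H(M)=\bigoplus_{k\ge 0} H^k(M)$ is an $\NN$-graded vector space with respect to the tensor grading.

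Finally, the lower bound $|f|\geq 0$ is a tautology of the tensor grading on $\Omega M$. The upper bound is read off from the degree matching above: a Hamiltonian form of pure tensor degree $k$ requires an associated Hamiltonian multivector field $Y$ of tensor degree $(n+1)-k$, and multivector fields live in non-negative tensor degrees, which forces $k\leq n+1$.

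I do not foresee a genuine obstacle; the only mild subtlety is the extremal slice $k=n+1$, where the first fundamental equation admits no solution of negative degree and must therefore be read as the constraint $df_{n+1}=0$. This is consistent and does not affect the bound being proved.
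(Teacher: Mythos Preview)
Your proposal is correct and follows the same route as the paper: linearity of the two fundamental equations yields the vector-space structure, and the degree count on $i_Y\omega$ versus $f$ forces $|f|\le n+1$. Your version is in fact more explicit than the paper's, which simply asserts that $H(M)$ is a \emph{graded} subspace; you actually verify, via the degree-matching argument, that each homogeneous component $f_k$ of a Hamiltonian form is again Hamiltonian, a step the paper leaves implicit.
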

\begin{proof} Since both fundamental equations are linear in their arguments,
$H\left(M\right)$ is a graded vector subspace of $\Omega M$. 

The lower bound is obvious. The upper bound holds, since for a differential
form $f$ of tensor degree $|\,f\,|>(n+1)$ there can't be a non trivial solution
to the second fundamental equation $i_Y\omega=-f$, since $\omega$ is of tensor
degree $n+1$ only.
\end{proof}
Hamiltonian forms of tensor degree $n+1$ have to be closed, since there can't be
a non trivial solution to the first fundamental equation in that case.
In contrast semi-Hamiltonian forms have no upper bound but have to be closed in
tensor degrees greater than $n+1$ for the same reason. 

This is a rather restrictive property with far reaching implications on the 
algebraic structure of Hamiltonian forms.
\begin{corollary}The set $H(M)$ of Hamiltonian forms is not a subalgebra of $\Omega(M)$.
\end{corollary}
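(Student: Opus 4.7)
The plan is to exploit the tensor-degree bound from Proposition \ref{vector_space}: every Hamiltonian form has tensor degree at most $n+1$. Since the wedge product of a $p$-form with a $q$-form is a $(p+q)$-form, it suffices to exhibit two Hamiltonian forms whose wedge product is non-zero and of degree strictly greater than $n+1$; such a product necessarily lies outside $H(M)$, so $H(M)$ fails to be closed under $\wedge$ and is therefore not a subalgebra of $\Omega(M)$.

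First I would observe that the n-plectic form $\omega$ is itself Hamiltonian. Indeed, the first fundamental equation $i_X \omega = -d\omega$ is solved by $X = 0$ since $\omega$ is closed, and the second equation $i_Y \omega = -\omega$ is solved by the constant function $-1$, regarded as a multivector field of tensor degree zero, for which $i_{-1}\omega = -\omega$. Hence $\omega \in H(M)$ with $|\,\omega\,| = n+1$.

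Next I would form $\omega \wedge \omega$, whose tensor degree is $2(n+1) > n+1$. Proposition \ref{vector_space} then immediately forbids $\omega \wedge \omega \in H(M)$ whenever it does not vanish identically. The only step requiring actual verification is the non-vanishing of $\omega \wedge \omega$; on any multisymplectic fiber bundle of sufficient dimension this is transparent from a direct expansion in the Darboux coordinates of equation (\ref{multisym_omega}), where the monomials produced are linearly independent and do not cancel.

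The main, and very mild, obstacle is therefore only the explicit check that $\omega \wedge \omega$ does not vanish. On low-dimensional degenerate examples where $\omega \wedge \omega = 0$, one would instead select two Hamiltonian forms of intermediate degree, again read off from the Darboux presentation, whose wedge product still exceeds degree $n+1$; the degree-overflow principle driving the argument is the same.
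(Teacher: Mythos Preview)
Your approach is the same as the paper's: both arguments rest on the tensor-degree bound of Proposition~\ref{vector_space} and observe that wedge products generically overshoot the bound $n+1$. The paper's proof is in fact terser than yours---it just remarks that the upper bound forces the wedge product of (semi-)Hamiltonian forms to fall outside the class ``in general'' without naming an explicit pair.

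One small correction to your concrete example: the vanishing of $\omega\wedge\omega$ is governed by parity, not by dimension. Whenever $n$ is even the form $\omega$ has odd degree $n+1$, so $\omega\wedge\omega = -\,\omega\wedge\omega = 0$ identically, no matter how large the manifold is or whether Darboux coordinates are available. Your diagnosis of ``low-dimensional degenerate examples'' therefore misses half of all cases. Your fallback---picking two Hamiltonian forms of intermediate degree whose wedge product still exceeds $n+1$---is the right repair and is exactly the spirit of the paper's ``in general'' argument; just be aware that the failure of the primary example is systematic rather than exceptional.
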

\begin{proof}As a consequence of the upper bound on non closed
semi-Hamiltonian forms, the wedge product of semi-Hamiltonian forms is in 
general not a semi-Hamiltonian form.  
\end{proof}
Nevertheless the wedge product still closes on Hamiltonian functions. This is an $n$-plectic generalization of the associative product for functions in symplectic geometry. 
\begin{theorem}\label{ring}
Let $C^\I(M)$ be the algebra of smooth functions on an $n$-plectic
manifold $(M,\omega)$ and $H_0(M)$ the subset of Hamiltonian functions. Then $H_0(M)$ is a
subalgebra of $C^\I(M)$.
\end{theorem}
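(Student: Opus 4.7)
The plan is to show that $H_0(M)$ is closed under pointwise multiplication; closure under addition and scalar multiplication already follows from Proposition \ref{vector_space}. So fix two Hamiltonian functions $f, g \in H_0(M)$, together with associated semi-Hamiltonian multivector fields $X_f, X_g$ of tensor degree $n$ satisfying $i_{X_f}\omega = -df$ and $i_{X_g}\omega = -dg$, and Hamiltonian multivector fields $Y_f, Y_g$ of tensor degree $n+1$ satisfying $i_{Y_f}\omega = -f$ and $i_{Y_g}\omega = -g$. The task is to exhibit analogous multivector fields witnessing that $fg$ is Hamiltonian.

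The key observation is that interior contraction $i_{(\cdot)}\omega$ is $C^\I(M)$-linear in the multivector field slot, so associated fields can be rescaled by smooth functions. For the first fundamental equation, the Leibniz rule $d(fg) = f\,dg + g\,df$ suggests setting $X_{fg} := f X_g + g X_f$; a short calculation then yields $i_{X_{fg}}\omega = f\,i_{X_g}\omega + g\,i_{X_f}\omega = -d(fg)$, so $fg$ is semi-Hamiltonian. For the second fundamental equation, setting $Y_{fg} := f Y_g$ (or equivalently $g Y_f$) gives $i_{Y_{fg}}\omega = f\,i_{Y_g}\omega = -fg$. Both are genuine multivector fields of the correct tensor degree, since multiplication by a smooth function preserves multivector degree.

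I do not expect any real obstacle: the argument is purely algebraic and rests only on the Leibniz rule for $d$ and the $C^\I(M)$-linearity of interior contraction. The reason this closure works in degree $0$, while failing for higher-degree Hamiltonian forms (cf.\ the preceding corollary), is that here we never need to wedge $f$ and $g$ with each other beyond pointwise scalar multiplication, so the upper bound $|fg| \leq n+1$ from Proposition \ref{vector_space} is automatically respected. The only point one should briefly verify is that the constructed $X_{fg}$ and $Y_{fg}$ really have the correct tensor degrees $n$ and $n+1$, which is immediate from the corresponding degrees of $X_f, X_g, Y_f, Y_g$.
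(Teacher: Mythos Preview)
Your proposal is correct and matches the paper's proof essentially line for line: the paper also invokes Proposition~\ref{vector_space} for the vector space structure, then exhibits $f_1X_2 + f_2X_1$ as a semi-Hamiltonian multivector field and $f_1Y_2$ as a Hamiltonian multivector field for the product $f_1f_2$, via the same short computations you give. Your additional remarks on tensor degrees and on why the argument is specific to degree~$0$ are accurate commentary but not part of the paper's own proof.
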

\begin{proof}
Since the zero function is Hamiltonian, $H_0(M)$ is not empty. By proposition 
(\ref{vector_space}) it is a vector subspace of $C^\I(M)$ and it only remains 
to show that the product of $C^\I(M)$ closes on Hamiltonian functions.

To see that let $f_1,f_2\in H_0(M)$ be Hamiltonian functions, with associated 
semi-Hamiltonian multivector fields $X_1$ and $X_2$ and associated Hamiltonian
multivector fields $Y_1$ and $Y_2$, respectively. 
A semi-Hamiltonian multivector field associated to the product $f_1f_2$
is defined by $f_1X_2 + f_2X_1$ as the computation
$i_{f_1X_2 + f_2X_1}\omega =-f_1df_2-f_2df_1=-d(f_1f_2)$ shows
and an associated Hamiltonian multivector field is given by $f_1Y_2$ (or $f_2Y_1$), since
$i_{f_1Y_2}\omega=-f_1f_2$. 
\end{proof}
An even more important consequence of the upper bound is the following corollary.
It basically tells us that the set of Hamiltonian forms is not always the section 
space of a vector bundle over $M$.
\begin{corollary}\label{negative_module}
In general, the set $H(M)$ of Hamiltonian forms is not a $C^\I(M)$-submodule of $\Omega(M)$.
\end{corollary}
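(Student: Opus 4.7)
The strategy is to exhibit a counterexample: a Hamiltonian form $f \in H(M)$ and a smooth function $g \in C^\infty(M)$ such that the product $g f$ fails to be Hamiltonian. The key lever is the upper bound $|\,f\,| \leq n+1$ from Proposition~\ref{vector_space}. One can push it slightly further: a Hamiltonian form of maximal degree $n+1$ must be closed, because otherwise its exterior derivative $df$ would have degree $n+2$ and the first fundamental equation $i_X\omega = -df$ could admit no solution, since $i_X\omega$ is a differential form of degree at most that of $\omega$ itself.

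The most natural candidate is $\omega$. It is semi-Hamiltonian via $X = 0$ because $d\omega = 0$, and it is Hamiltonian via the constant $0$-vector field $Y = -1 \in C^\infty(M)$, since $i_{-1}\omega = -\omega$. Hence $\omega \in H(M)$. If I then produce a smooth $g$ with $dg \wedge \omega \neq 0$, the product $g\omega$ has tensor degree $n+1$ but satisfies $d(g\omega) = dg \wedge \omega \neq 0$, so it is not closed. By the closedness requirement noted above, $g\omega$ cannot lie in $H(M)$, and this settles the corollary.

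Producing such a $g$ is the only non-trivial point, and is precisely why the statement reads ``in general''. Whenever $\dim M > n+1$, a brief local computation suffices: expand $\omega_m$ in a coordinate basis at a point $m$ where $\omega_m \neq 0$, pick any nonzero monomial, and choose a coordinate function $x^j$ whose index lies outside that monomial's multi-index (possible because the multi-index has only $n+1$ entries). Then $g := x^j$ gives $dg \wedge \omega \neq 0$ in a neighbourhood of $m$, and the previous paragraph applies.

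The main (mild) obstacle is this existence statement, and the need to acknowledge that in the degenerate dimension $\dim M = n+1$ (for example a volume form) one has $\Omega^{n+2}M = 0$, the obstruction collapses, and $H(M)$ may in fact be a $C^\infty(M)$-module in top degree. The qualifier ``in general'' in the corollary is precisely there to exclude this edge case, and the construction above covers every other setting.
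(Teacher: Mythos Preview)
Your argument is correct and follows essentially the same route as the paper: pick a Hamiltonian form of top tensor degree $n+1$, observe it must be closed, and then multiply by a function $g$ with $dg\wedge\omega\neq 0$ to obtain a non-closed $(n+1)$-form, which therefore cannot be Hamiltonian. The only difference is cosmetic: the paper starts from an arbitrary top-degree Hamiltonian form $f$ and observes it must equal $-\phi\,\omega$ for some function $\phi$ with $d\phi\wedge\omega=0$, whereas you go straight to the canonical choice $f=\omega$ (i.e.\ $\phi=-1$), which streamlines the computation. Your explicit discussion of the edge case $\dim M=n+1$ is a nice addition that the paper leaves implicit in the phrase ``in general''.
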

\begin{proof}To see that consider a Hamiltonian form $f$ of
tensor degree $n+1$. An associated Hamiltonian multivector field 
has to be of tensor degree zero (up to elements of the kernel of $\omega$) and hence
is a function $\phi$ satisfying $\phi\omega=-f$.\footnote{Recall that functions are both:
multivector fields as well as differential forms and the contraction is just 
multiplication.}
Since $f$ is closed we have  
$$0=-df=d(\phi\omega)=(d\phi)\wedge\omega+\phi(d\omega)=(d\phi)\wedge \omega$$
and it follows, that $d\phi$ is a \textit{zero divisor} of $\omega$ 
with respect to the wedge product.
Now suppose that there is a function $g\in C^\I(M)$ such that $dg$ is not a zero divisor of
$\omega$. Then $gf$ is not a Hamiltonian form since in general
$$d(gf)= dg\wedge f + gdf = dg \wedge (\phi\omega) = \phi(dg\wedge \omega)\neq 0\;.$$

Hence if there are functions $g\in C^\I(M)$ such that their exterior derivative $dg$ 
is not a zero divisor of $\omega$, then $H(M)$ is not a $C^\I(M)$-submodule of $\Omega(M)$.
\end{proof}
Fortunately it is enough to restrict to zero divisors of the $n$-pletic form to get an
appropriate function ring, that qualifies in a module structure on Hamiltonian forms.
We propose the following definition and proof the module structure later, since it
is easier to use another grading, defined in the next section.
\begin{definition}Let $(M,\omega)$ be an $n$-plectic manifold and $C^\I(M)$ the algebra
of smooth functions on $M$. We say that the set
$$C^\I_\omega(M):=\left\{f\in C^\I(M)\;|\; df\wedge \omega =0 \right\}\;,$$
is the \textbf{n-plectic function algebra} of $M$. Moreover we call a function 
$f\in C^\I_\omega(M)$ an \textbf{$n$-pletic function}. 
\end{definition}
Any constant function is $n$-plectic and so the $C_\omega^\I(M)$ is not empty. 
\begin{prop}
$C^\I_\omega(M)$ is a subalgebra of $C^\I(M)$.
\end{prop}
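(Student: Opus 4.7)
The plan is to verify directly that $C^\infty_\omega(M)$ is closed under the algebra operations of $C^\infty(M)$, namely scalar multiplication, addition, pointwise multiplication, and that it contains the unit. Each verification reduces to a one-line computation that exploits the linearity of the exterior derivative and the Leibniz rule.

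First I would observe that the unit $1 \in C^\infty(M)$ lies in $C^\infty_\omega(M)$, since $d1 = 0$ and thus $d1 \wedge \omega = 0$; in particular $C^\infty_\omega(M)$ is nonempty and contains all constant functions. Next, for $f, g \in C^\infty_\omega(M)$ and $a, b \in \mathbb{R}$, the $\mathbb{R}$-linearity of $d$ and the bilinearity of $\wedge$ give
\begin{equation*}
d(af + bg)\wedge \omega = a\,(df \wedge \omega) + b\,(dg \wedge \omega) = 0,
\end{equation*}
so $af + bg \in C^\infty_\omega(M)$ and $C^\infty_\omega(M)$ is an $\mathbb{R}$-linear subspace.

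The main step is closure under the pointwise product. For $f, g \in C^\infty_\omega(M)$, apply the graded Leibniz rule to compute
\begin{equation*}
d(fg)\wedge \omega = \bigl((df)\,g + f\,(dg)\bigr) \wedge \omega = g\,(df\wedge \omega) + f\,(dg\wedge \omega) = 0,
\end{equation*}
where in the middle equality we used that $f$ and $g$ are functions (degree zero) so they commute with forms under the wedge product. Hence $fg \in C^\infty_\omega(M)$.

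There is no real obstacle here: the only thing to be careful about is the sign/position of the function factors in the Leibniz expansion, but since $f$ and $g$ are of degree zero no signs arise. Putting the three verifications together establishes that $C^\infty_\omega(M)$ is a unital subalgebra of $C^\infty(M)$.
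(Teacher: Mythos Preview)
Your proof is correct and follows essentially the same approach as the paper: first check that $C^\infty_\omega(M)$ is a linear subspace, then verify closure under the product via the Leibniz rule. The paper's version is terser (phrasing the product step as ``the product of zero divisors is a zero divisor''), but the content is the same.
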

\begin{proof}From the distributive laws we see that $C_\omega^\I(M)$ is a
vector subspace of $C^\I(M)$. It only remains to show that the product of zero 
divisors is a zero divisor, but this is true, since the wedge product is graded commutative.
\end{proof}
\begin{example}
Let $M$ be an orientable manifold and $\omega$ the volume form on $M$. 
Then $C^\I_\omega(M)$ equals $C^\I(M)$, since $df\wedge\omega=0$ for any 
differential form $f\in \Omega(M)$.
\end{example}
\begin{example}
Let $(P\to M,\omega)$ be an multisymplectic fiber bundle. Then 
$C_\omega^\I(M)$ is the algebra of constant functions on $P$. This was shown in \cite{FPR2}. 
\end{example}
As the following theorem shows, the definition of (semi)-Hamiltonian forms is
at least natural with respect to n-plectomorphisms.
\begin{theorem}
Assume that $\left(M,\omega_M\right)$ and $\left(N,\omega_N\right)$ are n-plectic manifolds and
that $\phi:M\to N$ is an n-plectomorphism. The pullback $\phi^*f$ is a Hamiltonian form on
$M$ for any Hamiltonian form $f\in H(N)$ on $N$.  
\end{theorem}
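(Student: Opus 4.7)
The plan is to transport the associated multivector fields along $\phi$. Since $\phi$ is an $n$-plectomorphism it is in particular a diffeomorphism, so the pullback $\phi^\ast Z := (\phi^{-1})_\ast Z$ of a multivector field $Z$ on $N$ is a well-defined multivector field on $M$. I will use this, together with the two standard naturality identities
\begin{equation*}
\phi^\ast(d\alpha) = d(\phi^\ast\alpha), \qquad \phi^\ast(i_Z\alpha) = i_{\phi^\ast Z}(\phi^\ast\alpha),
\end{equation*}
valid for any differential form $\alpha$ on $N$ and any multivector field $Z$ on $N$ (the second one requires $\phi$ to be a diffeomorphism, which is exactly what we have).

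Let $f\in H(N)$. By definition there exist a semi-Hamiltonian multivector field $X$ and a Hamiltonian multivector field $Y$ on $N$ with $i_X\omega_N=-df$ and $i_Y\omega_N=-f$. Define $X':=\phi^\ast X$ and $Y':=\phi^\ast Y$ on $M$. Applying $\phi^\ast$ to the first fundamental equation and using the two identities above together with $\phi^\ast\omega_N=\omega_M$ gives
\begin{equation*}
i_{X'}\omega_M = i_{\phi^\ast X}(\phi^\ast\omega_N) = \phi^\ast(i_X\omega_N) = \phi^\ast(-df) = -d(\phi^\ast f),
\end{equation*}
so $\phi^\ast f$ is semi-Hamiltonian on $M$ with associated multivector field $X'$. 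The same computation applied to the second fundamental equation yields
\begin{equation*}
i_{Y'}\omega_M = \phi^\ast(i_Y\omega_N) = \phi^\ast(-f) = -\phi^\ast f,
\end{equation*}
so $\phi^\ast f$ is Hamiltonian on $M$ with associated Hamiltonian multivector field $Y'$.

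There is no real obstacle here beyond verifying the naturality statement for interior product with multivector fields under a diffeomorphism; this reduces by the Leibniz/graded-derivation property of $i_Z$ on $\wedge^\bullet TN$ to the case of vector fields, where it is the elementary identity $\phi^\ast(i_X\alpha) = i_{(\phi^{-1})_\ast X}(\phi^\ast\alpha)$. Everything else is a one-line substitution using $\phi^\ast\omega_N=\omega_M$.
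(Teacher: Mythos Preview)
Your proof is correct and follows essentially the same approach as the paper: pull back the associated (semi-)Hamiltonian multivector fields via the diffeomorphism and use naturality of $d$ and of the contraction together with $\phi^*\omega_N=\omega_M$. The paper's proof is in fact slightly terser, only writing out the computation for the first fundamental equation and noting that the second is analogous, whereas you spell out both.
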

\begin{proof}
Since $\phi$ is a diffeomorphism we can pull any multivector
field $X$ on $N$ back to a multivector field $\phi^*X$ on $M$. 

It remains to show that if $X$ is associated to $f$ then
$\phi^*X$ is associated to $\phi^*f$, but that follows, since the exterior derivative is 
natural and the contraction has the natural property (\ref{multi_rules}). 
In particular we have 
$i_{X'}\omega_M=i_{X'}\phi^*\omega_N=\phi^*i_X\omega_N=-\phi^*df=-d\phi^*f$ 
for the first pairing and a  similar calculation for the second.
\end{proof}
\subsection{The Differential} We define a differential for Hamiltonian forms 
and give another grading, slightly different from the usual tensor grading. The
latter will simplify our calculations in the 
following sections and leads to a \textit{symmetric} Lie $\I$-algebra.
\begin{definition}[Symmetric Grading] 
Let $\left(M,\omega\right)$ be an n-plectic manifold and $f\in H(M)$ a Hamiltonian form 
homogeneous of tensor degree $r$. The \textbf{symmetric degree} of $f$ is  
\begin{equation}
deg(f):=n-r \; .
\end{equation}
To distinguish it from the usual tensor grading we use the symbol $|\,\cdot\,|$ 
exclusively for the tensor degree.
\end{definition}
An immediate consequence of the fundamental equations is that the 
tensor degree of any Hamiltonian multivector field $Y$ associated to a homogeneous 
Hamiltonian form $f$ is given by
\begin{equation}
|\,Y\,|=deg(f)+1.
\end{equation}
If $f$ is not closed, the tensor degree of an associated semi-Hamiltonian multivector field 
$X$ is
$$|\,X\,|=deg(f).$$
\begin{prop}Let $\left(M,\omega\right)$ be an n-plectic manifold. The set of 
Hamiltonian forms is a $\Z$-graded vector space with respect to the symmetric grading.
If $f\in H(M)$ is a Hamiltonian form its symmetric degree is bounded by
$$
-1\leq deg(f) \leq n.
$$
\end{prop}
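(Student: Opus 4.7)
The plan is to reduce this entirely to Proposition \ref{vector_space}, since the symmetric grading is nothing more than an affine relabelling of the tensor grading. First I would observe that by the definition $deg(f) = n - |f|$, specifying a decomposition of $H(M)$ by symmetric degree is equivalent to specifying its decomposition by tensor degree; the two gradings differ only by the bijection $r \mapsto n-r$ between $\NN$ and a subset of $\Z$. Hence the vector space axioms for the symmetric grading are inherited at no cost from Proposition \ref{vector_space}, which already established that $H(M)$ is a graded subspace of $\Omega(M)$ under the tensor grading.

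Next I would derive the bounds on $deg(f)$ by transporting the bounds $0 \leq |f| \leq n+1$ from Proposition \ref{vector_space} through this bijection. The upper bound $|f| \leq n+1$ becomes $deg(f) = n - |f| \geq -1$, and the lower bound $|f| \geq 0$ becomes $deg(f) \leq n$. Since the tensor degrees actually attained by Hamiltonian forms fill the integer interval $[0,n+1]$ (the zero form and constant functions show the extremes are reached), the symmetric degrees attained lie in the integer interval $[-1,n]$, confirming the bounds in the statement.

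There is essentially no obstacle here: the content is packaged in Proposition \ref{vector_space} and the definition of $deg$, so the proof is a two-line reindexing argument. The only thing worth making explicit is that extending the indexing set from $\NN$ to $\Z$ is harmless, as one simply declares $H(M)_k = 0$ for $k \notin \{-1,0,\ldots,n\}$.
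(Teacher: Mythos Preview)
Your proposal is correct and matches the paper's own proof, which consists of the single line ``Follows from proposition (\ref{vector_space}).'' You have simply spelled out the reindexing $deg(f)=n-|f|$ in more detail than the paper does, which is fine.
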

\begin{proof} Follows from proposition (\ref{vector_space}).
\end{proof}
We choose this grading to simplify our calculations. 
If one wants Hamiltonian forms to be bounded by zero, the grading can be shifted. 
In that case we get a Lie $\I$-algebra in its graded \textit{skew}-symmetric incarnation.
Unless otherwise stated, we assume that Hamiltonian forms are graded with respect to the 
symmetric grading.

Besides the vector space structure, Hamiltonian forms have an additional module structure
related to the ring of $n$-plectic functions.
\begin{theorem}
Let $(M,\omega)$ be an $n$-plectic manifold and 
$C_\omega^\I(M)$ the $n$-plectic function algebra. The set of Hamiltonian forms $H(M)$ is a $C_\omega^\I(M)$-module.

If $f \in C_\omega^\I(M)$ is an $n$-plectic function and $g$ a homogeneous Hamiltonian 
form with associated semi-Hamiltonian and Hamiltonian multivector field $X$ and $Y$, 
respectively, then an associated Hamiltonian multivector field of the scalar product 
$fg$ is given by
\begin{equation}
f Y
\end{equation}
and an associated semi-Hamiltonian multivector field of the scalar product $fg$ is 
given by 
\begin{equation}
e(f)[f,Y]+fX.
\end{equation}
\end{theorem}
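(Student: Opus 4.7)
The plan is to reduce the theorem to its genuine content and then verify the two concrete formulas for the associated multivector fields. Since $H(M)$ is already a graded vector subspace of $\Omega(M)$ by Proposition \ref{vector_space} and the $C^\I(M)$-module structure on $\Omega(M)$ restricts to $C_\omega^\I(M)$, the module axioms are inherited automatically. The substantial claim is therefore that, for $f\in C_\omega^\I(M)$ and a homogeneous $g\in H(M)$ with associated semi-Hamiltonian field $X$ and associated Hamiltonian field $Y$, the product $fg$ is again Hamiltonian, with $fY$ an associated Hamiltonian multivector field and $e(f)[f,Y]+fX$ an associated semi-Hamiltonian one.

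The Hamiltonian equation for $fY$ is immediate from $C^\I(M)$-linearity of contraction in the multivector slot:
\begin{equation*}
i_{fY}\omega \;=\; f\cdot i_Y\omega \;=\; -fg ,
\end{equation*}
so $fY$ satisfies the second fundamental equation, and hence $fg$ is Hamiltonian as soon as it is shown to be semi-Hamiltonian.

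For the semi-Hamiltonian equation I would expand
\begin{equation*}
-d(fg) \;=\; -df\wedge g - f\,dg \;=\; df\wedge i_Y\omega + i_{fX}\omega,
\end{equation*}
substituting $-g = i_Y\omega$ and $-dg = i_X\omega$. The piece $i_{fX}\omega$ already matches the contribution of $fX$ to the prospective $Z$. The remaining summand $df\wedge i_Y\omega$ is handled by the graded-Leibniz rule for interior products against a wedge from Appendix \ref{AP_2}, which in this situation reads
\begin{equation*}
df\wedge i_Y\omega \;=\; i_{\iota_{df}Y}\omega \;-\; i_Y\!\bigl(df\wedge\omega\bigr).
\end{equation*}
Here the hypothesis $f\in C_\omega^\I(M)$ enters decisively: $df\wedge\omega = 0$ kills the second term. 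Rewriting the contraction $\iota_{df}Y$ as a Schouten bracket via the standard identity $\iota_{df}Y = e(f)[f,Y]$ then produces precisely the summand $e(f)[f,Y]$, yielding
\begin{equation*}
i_{e(f)[f,Y]+fX}\omega \;=\; -d(fg),
\end{equation*}
which is the first fundamental equation for $fg$. That these expressions do not depend on which associated fields $X,Y$ one picks, which is what is needed for scalar multiplication to be well-defined on $H(M)$, follows from the kernel property of Proposition \ref{kernel_prop}.

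The hard part will be the sign bookkeeping encoded in $e(f)$: both the Leibniz expansion of $i_Y(df\wedge\omega)$ and the conversion between $\iota_{df}Y$ and $[f,Y]$ carry grading-dependent signs, whose composition is fixed by consistency with the $-1$ conventions of the fundamental equations. The underlying picture is nonetheless transparent: the naive guess $fX$ fails to be semi-Hamiltonian for $fg$ by exactly $i_Y(df\wedge\omega)$, and it is precisely this defect that $C_\omega^\I(M)$ is defined to annihilate, leaving a residual correction expressible as a Schouten bracket with $f$.
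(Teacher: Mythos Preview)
Your argument is correct and arrives at the same conclusion, but it is organized differently from the paper's. The paper computes forward from the proposed multivector field: it applies the identity $i_{[X,Y]}\alpha = (-1)^{(r-1)s}L_X i_Y\alpha - i_Y L_X\alpha$ of Proposition~\ref{multi_rules} with $X=f$ a degree-zero multivector, so that the generalized Lie derivative $L_f\omega = d(f\omega)$ appears and the condition $df\wedge\omega=0$ enters through $i_Y d(f\omega)$. You instead start from $-d(fg)$, peel off the $i_{fX}\omega$ piece, and treat the residual $df\wedge i_Y\omega$ via a Leibniz expansion of $i_Y(df\wedge\omega)$, then convert $\iota_{df}Y$ to $[f,Y]$. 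Both routes are short and both isolate $df\wedge\omega=0$ as the decisive hypothesis; yours is a bit more transparent about \emph{why} the correction term is a Schouten bracket with $f$, while the paper's stays entirely within the calculus it has already set up. One small caveat: the Leibniz identity you invoke for $i_Y$ against a wedge of forms is not actually among the formulas listed in Appendix~\ref{AP_2} (only the $i_{[X,Y]}$, $L_{[X,Y]}$, $L_{X\wedge Y}$ rules are there), so you would need to state and justify it separately---and, as you anticipate, track the $(-1)^{|Y|}$ signs carefully when you do.
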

\begin{proof}The first part follows from the second, since we only have to show
that the $C_\omega^\I(M)$ scalar multiplication closes on Hamiltonian forms. 
This can be seen by the direct calculation
$$i_{f\cdot Y}\omega=-fg$$ 
in case of the associated Hamiltonian multivector field and
\begin{align*}
i_{-e(f)[f,Y]+ fX}\omega
&= -e(f)i_{[f,Y]}\omega -fdg\\ 
&= L_{f}i_Y\omega-i_YL_f\omega -fdg\\
&= -L_fg -i_Yd(f\omega) -fdg\\
&= -d(fg)+fdg-i_Ydf\wedge\omega-fdg\\
&= -df\wedge g - fdg\\
&= D_1(fg)
\end{align*}
in case of the associated semi-Hamiltonian multivector field.
\end{proof}

Back on topic, the following proposition qualifies the exterior derivative as a valid differential on Hamilton forms. 
\begin{prop}\label{sym_differential}Let $\left(M,\omega\right)$ be an $n$-plectic
manifold. With respect to the symmetric grading, the exterior derivative $d$ is a 
differential on $H\left(M\right)$.
\end{prop}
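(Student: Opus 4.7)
The plan is to check the three defining properties of a differential: that $d$ maps $H(M)$ to itself, that it is homogeneous of degree $-1$ with respect to the symmetric grading, and that it squares to zero.

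First I would verify the degree. Since the exterior derivative raises the tensor degree by one, and the symmetric degree is $deg(f) = n - |f|$, the operator $d$ lowers the symmetric degree by exactly one, so it is homogeneous of degree $-1$ as required. The identity $d \circ d = 0$ is standard exterior calculus and needs no further comment.

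The only substantive point is showing that $d$ preserves $H(M)$, i.e. that $df$ is Hamiltonian whenever $f$ is. So let $f \in H(M)$ and let $X$ and $Y$ be associated semi-Hamiltonian and Hamiltonian multivector fields, satisfying $i_X\omega = -df$ and $i_Y\omega = -f$ respectively. To see that $df$ is semi-Hamiltonian, one must exhibit a multivector field $X'$ with $i_{X'}\omega = -d(df)$; but $d(df) = 0$, so the zero multivector field works. To see that $df$ is in fact Hamiltonian, one must exhibit $Y'$ with $i_{Y'}\omega = -df$; but the semi-Hamiltonian multivector field $X$ already satisfies exactly this equation, so $Y' := X$ does the job.

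The step I expect to require the most care in the writeup is just making sure the bookkeeping between the two fundamental equations is clean: the multivector field that witnesses the first pairing for $f$ is precisely the one that witnesses the second pairing for $df$, and the trivial multivector field witnesses the first pairing for $df$ since $df$ is automatically closed. No obstacle is hidden here, so the proposition follows.
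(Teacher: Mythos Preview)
Your proposal is correct and follows essentially the same route as the paper: verify the degree shift via $deg(df)=n-(|f|+1)=deg(f)-1$, note $d^2=0$, and for closure observe that the semi-Hamiltonian field $X$ of $f$ serves as the Hamiltonian field for $df$, while the zero (or any kernel) multivector field serves as the semi-Hamiltonian field for $df$ since $d(df)=0$. The only cosmetic difference is that you introduce the Hamiltonian field $Y$ of $f$, which is never used; you can drop it.
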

\begin{proof} Since the exterior derivative is a codifferential on differential forms
with respect to the tensor grading it only remains to show that it is homogeneous of
symmetric degree $-1$ and closes on Hamiltonian forms. 

The former can be seen from $deg(df)=n-(|f|+1)= (n-|f|)-1=deg(f)-1$. For the latter, 
assume $f\in H(M)$ with associated semi-Hamiltonian multivector field 
$X$. Then $df$ is closed and an associated semi-Hamiltonian multivector is given by 
any element of the kernel of $\omega$, while an associated Hamiltonian multivector field 
is given by $X$.
\end{proof}
Regarding our proposed Lie $\I$-algebra on Hamiltonian forms we will use the negative
exterior derivative as the differential for consistency reasons.
\begin{definition}
Let $(M, \omega)$ be an n-plectic manifold and $H(M)$ the $\Z$-graded
vector space of Hamiltonian forms on $M$. The \textbf{$n$-plectic differential}
\begin{equation}
D_1 : H(M) \to H(M)
\end{equation}
of $H(M)$ is defined for any Hamiltonian form $f\in H(M)$ by the negative exterior derivative
$$
D_1f:=-df\;.
$$
\end{definition}
In terms of the $n$-plectic differential, the first fundamental equation then just reads 
as $i_X\omega = D_1f$.
\subsection{The Bilinear Operator}\label{subsec_binary} We generalize 
the usual Poisson bracket of functions on a symplectic manifold (see 
for example \cite{AM}) to Hamiltonian forms in a general n-plectic framework. Inspired 
by the Poisson bracket in \cite{FPR}, our bilinear operator is just a 
(graded symmetric) adoption of the former to a differential graded 
setting where no Hamiltonian primitive of the $n$-plectic form is available.  

\begin{definition}\label{D_2}
Let $\left(M, \omega\right)$ be an n-plectic manifold and $H\left(M\right)$ the $\Z$-graded vector space of Hamiltonian forms on $M$. The 
\textbf{strong homotopy Lie 2-bracket}
\begin{equation}
D_2:  H(M)\times H(M) \to H(M) \\
\end{equation}
is defined for any homogeneous $f_1$, $f_2\in H\left(M\right)$ and associated 
semi-Hamiltonian multivector fields $X_1$ and $X_2$ by 
$$D_2\left(f_1,f_2\right):= e\left(f_1\right)L_{X_{1}}f_2 
	+e\left(f_1,f_2\right)e\left(f_2\right)L_{X_{2}}f_1$$  
and is then extended to $H\left(M\right)$ by linearity. 
\end{definition}
Also similar operators (\cite{FPR},\cite{AM}) are usually called 
\textit{Poisson} bracket, this is misleading in our
context, since there is no known product to define a Poisson ($\I$)-algebra 
on $H(M)$ in general. 
In addition we propose the '\textit{strong homotopy}' modifier since theorem
(\ref{jacobi_id}) shows that the Jacobi identity does not vanish but holds 'up to higher homotopies' as we will see in the next section. 

On the technical level, a first thing to show is, that the bracket is 
independent of the particular chosen associated semi-Hamiltonian multivector fields. This is guaranteed by proposition (\ref{kernel_prop}). 
\begin{theorem}\label{well_def_2}
For any two $f_1$, $f_2\in H\left(M\right)$, the image 
$D_2(f_1,f_2)$ is a well defined Hamiltonian form. If $Y_1$ resp. $Y_2$ are
associated Hamiltonian multivector fields and $X_1$ resp. $X_2$ are associated
semi-Hamiltonian multivector fields, then an associated Hamiltonian 
multivector field $Y_{D_2}\left(f_1,f_2\right)$ is given by
\begin{equation}
\left[Y_2,X_1\right]
  +e\left(f_1,f_2\right)\left[Y_1,X_2\right]
\end{equation}
and an associated semi-Hamiltonian multivector field $X_{D_2}\left(f_1,f_2\right)$ by
\begin{equation}
-2e\left(f_1\right)\left[X_2,X_1\right]\; .
\end{equation}
\end{theorem}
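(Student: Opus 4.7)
The plan is to separate the argument into two independent parts: first establishing that $D_2(f_1,f_2)$ is independent of the chosen semi-Hamiltonian representatives, then verifying by direct contraction that the two displayed multivector fields are associated to it. For well-definedness I would invoke Proposition \ref{kernel_prop}: any two semi-Hamiltonian multivector fields $X_1, X_1'$ associated to $f_1$ differ by an element of $\ker(\omega)$, and by Cartan's magic formula $L_{X_1} f_2 = i_{X_1} df_2 \pm d\, i_{X_1} f_2$, the claim $L_{X_1} f_2 = L_{X_1'} f_2$ reduces to showing $i_{(X_1-X_1')} f_2 = 0$ and $i_{(X_1-X_1')} df_2 = 0$. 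The first vanishes by the kernel property of the Hamiltonian form $f_2$; the second vanishes as well because Proposition \ref{sym_differential} identifies $D_1 f_2 = -df_2$ as itself Hamiltonian, to which the same kernel property applies. An identical argument handles the second summand of Definition \ref{D_2}.

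For the associated Hamiltonian multivector field, I would apply the graded Cartan-type identity
\begin{equation*}
i_{[A,B]} \omega \;=\; L_A\, i_B \omega \;\pm\; i_B\, L_A \omega,
\end{equation*}
whose sign is determined by the tensor degrees of $A$ and $B$. Using $d\omega = 0$ so that $L_A \omega = d\, i_A \omega$, the fundamental equations $i_{Y_2}\omega = -f_2$ and $i_{X_1}\omega = -df_1$ kill the second term via $d^2 f_1 = 0$, leaving $i_{[Y_2, X_1]} \omega = -L_{Y_2} df_1$. Commuting the Lie derivative with $d$ and re-expressing it through $-f_2 = i_{Y_2}\omega$ and $i_{X_1}\omega = -df_1$ yields a term proportional to $L_{X_1} f_2$; the symmetric partner $L_{X_2} f_1$ arises from $e(f_1,f_2)[Y_1,X_2]$, and collecting Koszul signs reproduces $-D_2(f_1,f_2)$.

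For the semi-Hamiltonian multivector field, I would apply the same Cartan-type identity to $[X_2, X_1]$. Since $i_{X_i}\omega = -df_i$ is exact, the second term vanishes by $d^2=0$, giving $i_{[X_2,X_1]}\omega = -L_{X_2} df_1 = -d\, L_{X_2} f_1$. The graded antisymmetry of the Schouten bracket yields the dual presentation $\mp d\, L_{X_1} f_2$; averaging the two presentations and multiplying by $-2e(f_1)$ produces both summands of $-d D_2(f_1,f_2) = D_1 D_2(f_1,f_2)$, which is exactly the condition that $-2e(f_1)[X_2,X_1]$ is semi-Hamiltonian associated to $D_2(f_1,f_2)$. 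The main obstacle throughout is sign bookkeeping: the Koszul signs in the Cartan-type identity and in the graded antisymmetry of the Schouten bracket depend on the tensor degrees $|X_i|$, $|Y_i|$ which are fixed by $\deg(f_i)$, and matching them against the author's $e(\cdot)$ symbols is where almost all the work lives; in particular the explicit factor $-2$ must emerge cleanly from the fact that both presentations of $[X_2,X_1]$ contribute equal summands after symmetrization.
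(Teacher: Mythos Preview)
Your well-definedness argument and your computation for the semi-Hamiltonian multivector field $X_{D_2}$ match the paper's proof essentially line for line. The gap is in your treatment of $Y_{D_2}$. You write that applying the Cartan-type identity to $i_{[Y_2,X_1]}\omega$ and using $d^2 f_1 = 0$ ``kills the second term'', leaving $-L_{Y_2}\,df_1$. These two claims are inconsistent. In the identity $i_{[A,B]}\omega = \pm L_A i_B\omega - i_B L_A\omega$ with $A=Y_2$ and $B=X_1$, the second term is
\[
i_{X_1}L_{Y_2}\omega \;=\; i_{X_1}\,d\bigl(i_{Y_2}\omega\bigr) \;=\; -\,i_{X_1}\,df_2,
\]
which has nothing to do with $d^2 f_1$ and does \emph{not} vanish. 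The vanishing ``via $d^2 f_1 = 0$'' only happens when the \emph{semi}-Hamiltonian field $X_1$ occupies the $L$-slot, since then $L_{X_1}\omega = d(i_{X_1}\omega) = -d^2 f_1 = 0$. The paper arranges exactly this: it first uses the graded antisymmetry of the Schouten bracket to rewrite $[Y_2,X_1]$ as a sign times $[X_1,Y_2]$, then applies the identity with $A=X_1$, so the unwanted term is $i_{Y_2}L_{X_1}\omega = 0$ and the survivor is $\pm L_{X_1}i_{Y_2}\omega = \mp L_{X_1}f_2$ immediately---no further ``commute $L$ with $d$ and re-express'' step is needed.

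Your route is salvageable (the spurious $-i_{X_1}df_2$ term does eventually cancel once you expand $L_{Y_2}df_1 = d\,i_{Y_2}df_1$ and rewrite $i_{Y_2}df_1 = \pm i_{X_1}f_2$ via the two fundamental equations), but as written the argument contains a false step, and in any case it is a detour around the one-line simplification the paper uses: always flip the bracket so that a semi-Hamiltonian $X_i$ carries the Lie derivative, because $L_{X_i}\omega = 0$ while $L_{Y_i}\omega = -df_i \neq 0$.
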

\begin{proof}
To see that $D_2$ is well defined, suppose $\xi$ is a multivector fields from the 
kernel of $\omega$. Then $L_{\xi}f=di_{\xi}f-(-1)^{|\,\xi\,|}i_{\xi}df=0$ since $f$ as
well as $df$ has the kernel property and we get 
$L_{X+\xi}f=L_{X}f$ for any Hamiltonian form $f$ and semi-Hamiltonian multivector field $X$. 

By prop. (\ref{kernel_prop}) the difference of multivector fields associated to 
the same Hamiltonian form is an element of the kernel of $\omega$ and consequently the image 
$D_2(f_1,f_2)$ does not depend on the particular chosen associated semi-Hamiltonian multivector 
field.

To see that $[Y_2,X_1]+e(f_1,f_2)[Y_1,X_2]$ is an associated Hamiltonian multivector 
field, compute
\begin{align*}
i_{Y_{D_2}\left(f_1,f_2\right)}\omega
&=i_{\left[Y_2,X_1\right]}\omega +e\left(f_1,f_2\right)i_{\left[Y_1,X_2\right]}\omega\\
&=- e\left(f_1,f_2\right)e\left(f_2\right)i_{\left[X_1,Y_2\right]}\omega
	-e\left(f_1\right)i_{\left[X_2,Y_1\right]}\omega\\
&=-e\left(f_1,f_2\right)e\left(f_2\right)\left(
	-e\left(f_1,f_2\right)e\left(f_1\right)e\left(f_2\right)
		L_{X_1}i_{Y_2}\omega-i_{Y_2}L_{X_1}\omega\right)\\
&\quad-e\left(f_1\right)\left(-e\left(f_1,f_2\right)e\left(f_1\right)e\left(f_2\right)
		L_{X_2} i_{Y_1}\omega-i_{Y_1}L_{X_2}\omega\right)\\	
&= e\left(f_1\right)L_{X_1}i_{Y_2}\omega + 
	e\left(f_1,f_2\right)e\left(f_2\right)L_{X_2} i_{Y_1}\omega\\
&= -e\left(f_1\right)L_{X_1}f_2 - e\left(f_1,f_2\right)e\left(f_2\right)
		L_{X_2} f_1\\
&= -D_2\left(f_1,f_2\right)
\end{align*}
and to see that $-2e(f_1)[X_2,X_1]$ is an associated semi-Hamilton multivector field 
use $L_X\omega=0$ (for a semi-Hamiltonian multivector field) and compute 
\begin{align*}
-2e\left(f_1\right)i_{[X_2,X_1]}\omega
&=-e\left(f_1\right)i_{[X_2,X_1]} \omega 
	-e\left(f_1,f_2\right)e\left(f_2\right)i_{[X_1,X_2]} \omega\\
&=-e\left(f_1,f_2\right)L_{X_2} i_{X_1} \omega
   +e\left(f_1\right)i_{X_1} L_{X_2} \omega  \\
&\quad-L_{X_1} i_{X_2} \omega
	+ e\left(f_1,f_2\right)e\left(f_2\right)i_{X_2} L_{X_1} \omega \\
&=- e\left(f_1,f_2\right)e\left(f_2\right)e\left(f_2\right)
		L_{X_2} D_1f_1- e\left(f_1\right)e\left(f_1\right)L_{X_1} D_1f_2\\
&= e\left(f_1,f_2\right)e\left(f_2\right)e\left(f_2\right)
		L_{X_2} df_1+ e\left(f_1\right)e\left(f_1\right)L_{X_1}df_2\\
&= -e\left(f_1,f_2\right)e\left(f_2\right)
		dL_{X_2}f_1- e\left(f_1\right)dL_{X_1}f_2\\
&= e\left(f_1,f_2\right)e\left(f_2\right)
		D_1L_{X_2}f_1+ e\left(f_1\right)D_1L_{X_1}f_2\\
&=D_1D_2\left(f_1,f_2\right)\;.
\end{align*}
\end{proof}
In what follows we will sometimes referee to $X_{D_2}$ as a semi-Hamiltonian 
multivector field, associated to $D_2$, without stating the arguments explicit.
\begin{corollary}
The $\NN$-graded vector space of semi-Hamiltonian multivector fields is a subalgebra
of the Schouten algebra.
\end{corollary}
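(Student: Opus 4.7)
The plan is to verify the two subalgebra axioms: that the $\NN$-graded space of semi-Hamiltonian multivector fields is closed under linear combinations and under the Schouten--Nijenhuis bracket. Linearity of $i_{(\cdot)}\omega$ takes care of the first axiom immediately: if $i_{X_1}\omega=-df_1$ and $i_{X_2}\omega=-df_2$, then $i_{aX_1+bX_2}\omega = -d(af_1+bf_2)$, so the sum is semi-Hamiltonian with associated form $af_1+bf_2$.

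The main work is the bracket. First I would observe that every semi-Hamiltonian multivector field $X$ satisfies $L_X\omega=0$: since $\omega$ is closed and $i_X\omega=-df$ is also closed, the graded commutator formula $L_X=[d,i_X]$ gives $L_X\omega = d\,i_X\omega \pm i_X d\omega = -d^2 f = 0$. Next I would invoke the standard identity relating the Schouten bracket to the Lie derivative, namely
\begin{equation*}
i_{[X_1,X_2]}\omega \;=\; \pm\bigl(L_{X_1} i_{X_2}\omega \;-\; i_{X_2} L_{X_1}\omega\bigr),
\end{equation*}
with signs dictated by the conventions of appendix (\ref{AP_2}). Plugging in $L_{X_1}\omega=0$ and $i_{X_2}\omega=-df_2$, and using that $L_{X_1}$ commutes with $d$, collapses the right-hand side to $\mp\, d\bigl(L_{X_1} f_2\bigr)$. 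Hence $[X_1,X_2]$ is semi-Hamiltonian, with associated form $\pm L_{X_1} f_2$.

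Most of this has in fact already been established inside the proof of Theorem \ref{well_def_2}: the displayed computation there showing that $-2e(f_1)[X_2,X_1]$ is a semi-Hamiltonian multivector field associated to $D_2(f_1,f_2)$ uses only that $i_{X_j}\omega$ is exact, so the same reasoning applies to arbitrary semi-Hamiltonian multivector fields, not merely to those associated to Hamiltonian forms. Finally, since the Schouten bracket of a $p$-vector and a $q$-vector is a $(p{+}q{-}1)$-vector, the bracket respects the $\NN$-grading in the sense required of a graded subalgebra of the Schouten algebra.

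The only potential obstacle is bookkeeping: getting the multivector-field sign conventions consistent with those used for $L_X$ and for the Schouten bracket in the appendix, so that the sign I write for the associated form of $[X_1,X_2]$ is the one actually produced by the identity $[d,i_X]=L_X$ in the grading convention of the paper. No deep issue arises here; the existence of \emph{some} associated form is what the statement requires.
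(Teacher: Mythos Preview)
Your proposal is correct and follows essentially the same route as the paper: the paper's proof simply notes linearity of the first fundamental equation and then invokes the computation in Theorem~\ref{well_def_2} (that $-2e(f_1)[X_2,X_1]$ is semi-Hamiltonian) for closure under the Schouten bracket. You reproduce that computation in slightly more detail and make explicit the point the paper leaves implicit, namely that the argument in Theorem~\ref{well_def_2} uses only exactness of $i_{X_j}\omega$ and hence applies to arbitrary semi-Hamiltonian multivector fields, not just those associated to Hamiltonian forms.
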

\begin{proof}Semi-Hamiltonian multivector fields associated to $(n+1)$-forms are trivial, 
so the grading is valid. Since the first fundamental equation is linear it only remains 
to show that the Schouten bracket closes on semi-Hamiltonian multivector fields, but this is 
guaranteed by the previous theorem.
\end{proof}
\begin{remark}
The associated multivector field $Y_{D_2}\left(f_1,f_2\right)$ is graded symmetric 
in its arguments and from the super-symmetry of the Schouten bracket we get
$$-2e\left(f_1\right)\left[X_2,X_1\right]=
	-2e\left(f_1,f_2\right)e\left(f_2\right)\left[X_1,X_2\right]\;,$$ so that
$X_{D_2}(f_1,f_2)$ is graded symmetric too. 
If at least one of the arguments is a closed form, $X_{D_2}(f_1,f_2)$ 
vanishes. Consequently closed forms are a two-sided ideal in the non-associative algebra 
$\left(H(M), D_2\right)$.
\end{remark}
The next theorem shows that the strong homotopy Lie $2$-bracket qualifies as the bilinear
operator in a Lie $\I$-algebra, in the sense that it has the right symmetry and interacts 
with the differential as required.

\begin{theorem}
The bilinear operator $D_2$ is graded symmetric and homogeneous of degree $-1$ with respect to the symmetric grading. Moreover the strong homotopy Jacobi equation
in dimension two
\begin{equation}
D_1D_2\left(f_1,f_2\right)+D_2\left(D_1f_1,f_2\right)+
	e\left(f_1,f_2\right)D_2\left(D_1f_2,f_1\right)=0
\end{equation}
is satisfied for any Hamiltonian forms $f_1,f_2 \in H(M)$.
\end{theorem}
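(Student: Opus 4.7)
The graded symmetry $D_2(f_1,f_2)=e(f_1,f_2)D_2(f_2,f_1)$ follows by swapping the two summands in the definition and using $e(\cdot,\cdot)^2=1$; homogeneity of degree $-1$ is a direct count since $|X_i|=\deg(f_i)$ forces the tensor degree of $L_{X_i}f_j$ to equal $|f_j|-\deg(f_i)+1$, i.e.\ symmetric degree $\deg(f_i)+\deg(f_j)-1$. This disposes of the first assertion in a few lines.

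For the strong homotopy Jacobi identity in dimension two, the plan is to expand all three summands and collect coefficients of the building blocks $L_{X_i}D_1 f_j$. Applying $D_1=-d$ to the definition of $D_2(f_1,f_2)$ and using the commutation $dL_{X_i}=L_{X_i}d$ gives
$$D_1 D_2(f_1,f_2) \;=\; e(f_1)\,L_{X_1}D_1 f_2 \;+\; e(f_1,f_2)\,e(f_2)\,L_{X_2}D_1 f_1.$$
For the remaining two summands I exploit that $D_1 f_i$ is closed: by proposition \ref{sym_differential} an associated semi-Hamiltonian multivector field of $D_1 f_i$ may be chosen inside $\ker(\omega)$, and the observation opening the proof of theorem \ref{well_def_2} shows that such a kernel field annihilates the Lie derivative of every Hamiltonian form. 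Hence the first summand in each $D_2(D_1 f_i,\,\cdot\,)$ drops out, leaving
$$D_2(D_1 f_1,f_2) = e(D_1 f_1,f_2)\,e(f_2)\,L_{X_2}D_1 f_1, \qquad D_2(D_1 f_2,f_1) = e(D_1 f_2,f_1)\,e(f_1)\,L_{X_1}D_1 f_2.$$

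It then remains to collect coefficients. On $L_{X_2}D_1 f_1$ the total coefficient is $e(f_2)\bigl[e(f_1,f_2)+e(D_1 f_1,f_2)\bigr]$, and on $L_{X_1}D_1 f_2$ it is $e(f_1)\bigl[1+e(f_1,f_2)\,e(D_1 f_2,f_1)\bigr]$. Both brackets vanish by the elementary shift identities for the symmetric Koszul signs recorded in appendix \ref{AP_2}, under the degree $-1$ shift $\deg(D_1 f)=\deg(f)-1$. The main obstacle is therefore purely notational---this last round of sign bookkeeping---since the geometric content reduces to $[d,L_{X_i}]=0$ together with the fact that closedness of $D_1 f_i$ permits the $\ker(\omega)$-representative.
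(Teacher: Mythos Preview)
Your overall strategy coincides with the paper's: compute $D_1D_2(f_1,f_2)$ by commuting $d$ past the Lie derivatives, observe that the semi-Hamiltonian field associated to the closed form $D_1f_i$ may be taken in $\ker(\omega)$ so that one summand of each $D_2(D_1f_i,\cdot)$ drops out, and then match signs. The execution, however, contains a genuine error.

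The commutation you invoke, $dL_{X_i}=L_{X_i}d$, is valid only for vector fields. For a multivector field $X$ of tensor degree $r$ the correct identity from proposition~\ref{multi_rules} is $dL_X\alpha=(-1)^{r-1}L_Xd\alpha$, and here $|X_i|=\deg(f_i)$. With the right sign one obtains
\[
D_1D_2(f_1,f_2)=-L_{X_1}D_1f_2-e(f_1,f_2)\,L_{X_2}D_1f_1,
\]
not the expression you wrote. This is exactly the computation the paper carries out.

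Because of this slip your final ``sign bookkeeping'' does not in fact close. For instance, $e(D_1f_1,f_2)=(-1)^{(\deg(f_1)-1)\deg(f_2)}=e(f_1,f_2)\,e(f_2)$, so the bracket you claim vanishes is
\[
e(f_1,f_2)+e(D_1f_1,f_2)=e(f_1,f_2)\bigl(1+e(f_2)\bigr),
\]
which is zero only when $\deg(f_2)$ is odd; the second bracket likewise equals $1+e(f_1)$. Once you insert the missing $(-1)^{|X_i|-1}=-e(f_i)$ into the computation of $D_1D_2$, the coefficients become $-e(f_1,f_2)+e(D_1f_1,f_2)e(f_2)=0$ and $-1+e(f_1,f_2)e(D_1f_2,f_1)e(f_1)=0$, and the argument goes through as intended.
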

\begin{proof}
Assume $X_1$, $X_2 \in \mathfrak{X}M$ are semi-Hamiltonian multivector fields, 
associated to $f_1$ and $f_2$, respectively. 

Bilinearity is a straight forward implication of the definition, since a multivector 
field associated to any linear combination $\lambda_1f_1+\lambda_2f_2$ is given by 
the linear combination $\lambda_1X_1+\lambda_2X_2$. 
To see graded symmetry, compute
\begin{align*}
D_2\left(f_1,f_2\right) &= e\left(f_1\right)L_{X_1}f_2 
	+e\left(f_1,f_2\right)e\left(f_2\right)L_{X_2}f_1\\
&= e\left(f_1,f_2\right)\left(e\left(f_1,f_2\right)e\left(f_1\right)
	L_{X_1}f_2 + e\left(f_2\right)L_{X_2}f_1\right)\\
&= e\left(f_1,f_2\right)\left(e\left(f_2\right)
	L_{X_2}f_1+e\left(f_2,f_1\right)e\left(f_1\right)L_{X_1}f_2\right)\\
&= e\left(f_1,f_2\right)D_2\left(f_2,f_1\right).
\end{align*}
If $f_i$ is homogeneous of symmetric degree $deg(f_i)$ the Lie derivative 
along $X_i$ is homogeneous of degree $deg(f_i)-1$. Consequently 
$deg(D_2(f_1,f_2))=deg(f_1)+deg(f_2)-1$ and $D_2$ is homogeneous of 
symmetric degree $-1$.

Finally, compute the strong homotopy Jacobi equation in dimension two:
\begin{align*}
D_1D_2\left(f_1, f_2\right) &= -e\left(f_1\right)dL_{X_1}f_2
	-e\left(f_1,f_2\right)e\left(f_2\right)dL_{X_2}f_1\\
&= e\left(f_1\right)e\left(f_1\right) 
	L_{X_1}df_2 +e\left(f_1,f_2\right)e\left(f_2\right)e\left(f_2\right)L_{X_2}df_1\\	
&=-e\left(f_1\right)e\left(f_1\right)L_{X_1}\left(-df_2\right) 
	-e\left(D_1f_1,f_2\right)e\left(f_2\right)L_{X_2}\left(-df_1\right)\\ 	
&= -D_2\left(D_1f_1, f_2\right) - e\left(f_1\right)D_2\left(f_1, D_1f_2\right)\\	
&= -D_2\left(D_1f_1,f_2\right) 
	- e\left(f_1,df_2\right)e\left(f_1\right)D_2\left(D_1f_2, f_1\right)\\	
&= -D_2\left(D_1f_1, f_2\right) - e\left(f_1,f_2\right) D_2\left(D_1f_2, f_1\right).
\end{align*}
\end{proof}
In case $\omega$ is symplectic, (non-closed) Hamiltonian forms are just functions and definition
(\ref{D_2}) rephrases the usual Poisson bracket from symplectic geometry to a
differential graded setting.
However a big difference is, that in general the Jacobi identity does not vanish any more. 
\begin{theorem}[Jacobi Identity]\label{jacobi_id} 
The graded Jacobi identity does not vanish. 
Instead for any three Hamiltonian forms 
$f_1$, $f_2$, $f_3 \in H(M)$ the equation
\begin{multline*}
\smashoperator[r]{\sum_{s\in Sh\left(2,1\right)}}e\left(s;f_1,f_2,f_3\right)
	D_2\left(D_2\left(f_{s_1},f_{s_2}\right),f_{s_3}\right)\\
= -\tfrac{1}{2}\sum_{\mathclap{s\in Sh\left(2,1\right)}}e\left(s;f_1,f_2,f_3\right)
		e\left(f_{s_1}\right)e\left(f_{s_2}\right)
			L_{X_{D_2}\left(f_{s_1},f_{s_2}\right)}f_{s_3}
\end{multline*}
is satisfied.
\end{theorem}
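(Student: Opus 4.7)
The plan is to unfold the left-hand side one application of $D_2$ at a time. Write $g := D_2(f_{s_1}, f_{s_2})$ and use Definition \ref{D_2} together with the explicit semi-Hamiltonian multivector field $X_{D_2}(f_{s_1}, f_{s_2}) = -2 e(f_{s_1}) [X_{s_2}, X_{s_1}]$ associated to $g$ by Theorem \ref{well_def_2}. Then
$$D_2(g, f_{s_3}) = e(g)\, L_{X_{D_2}(f_{s_1}, f_{s_2})} f_{s_3} + e(g, f_{s_3})\, e(f_{s_3})\, L_{X_{s_3}} g.$$
Since $D_2$ has symmetric degree $-1$, one has $e(g) = -e(f_{s_1}) e(f_{s_2})$, so the first summand is exactly the integrand on the right-hand side of the theorem. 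Summing it against $e(s; f_1, f_2, f_3)$ over the three shuffles $s \in Sh(2,1)$ produces $-2$ times the claimed right-hand side.

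The identity therefore reduces to showing that the shuffle sum of the second summand equals $+1$ times the claimed right-hand side. To see this, I would substitute the definition of $g = D_2(f_{s_1}, f_{s_2})$ inside $L_{X_{s_3}}$, producing nested Lie derivatives of the form $L_{X_{s_3}} L_{X_{s_i}} f_{s_j}$. Applying the graded commutator identity $L_{X_{s_3}} L_{X_{s_i}} - e(X_{s_3}, X_{s_i}) L_{X_{s_i}} L_{X_{s_3}} = L_{[X_{s_3}, X_{s_i}]}$ on forms, each nested composition splits into a graded-symmetric half and half a Lie derivative along the Schouten bracket $[X_{s_3}, X_{s_i}]$. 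The bracket half is, up to the prefactor $-2 e(\cdot)$ hidden in $X_{D_2}(f_{s_i}, f_{s_3})$, precisely what is needed; the symmetric halves cancel across the three shuffles by the same pairing mechanism that underlies the classical symplectic Jacobi identity.

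The main obstacle is sign bookkeeping: one must juggle simultaneously the shuffle signs $e(s; f_1, f_2, f_3)$, the Koszul signs $e(f_i, f_j)$ and $e(X_i, X_j)$ introduced by reorderings, the parities $e(f_i)$ from the symmetric grading, the $-2 e(f_{s_1})$ prefactor of $X_{D_2}$, and the degree shift of the Lie derivative on forms. Beyond pure sign tracking, the technical ingredients I would invoke are $L_{X_i}\omega = 0$ (valid for semi-Hamiltonian $X_i$), the graded Leibniz and naturality properties of the Lie derivative with respect to contraction, and Proposition \ref{kernel_prop}, which guarantees that neither the final expression nor the intermediate substitutions depend on the particular choice of associated multivector fields $X_i$.
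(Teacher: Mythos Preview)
Your approach is essentially the paper's: expand the outer $D_2$ into the $L_{X_{D_2}(f_{s_1},f_{s_2})}f_{s_3}$ piece and the $L_{X_{s_3}}D_2(f_{s_1},f_{s_2})$ piece, then reduce the nested Lie derivatives in the second piece to Lie derivatives along Schouten brackets via the commutator rule $L_{[X,Y]} = \pm L_X L_Y - L_Y L_X$ from Proposition~\ref{multi_rules}, and finally recognise $-2e(f_{s_1})[X_{s_2},X_{s_1}]$ as $X_{D_2}(f_{s_1},f_{s_2})$.

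One slip to fix: your arithmetic does not close. With $e(g)=-e(f_{s_1})e(f_{s_2})$ the first piece contributes $-\sum_s e(s)e(f_{s_1})e(f_{s_2})L_{X_{D_2}}f_{s_3}$, which is $+2$ times the right-hand side (the right-hand side carries a $-\tfrac12$), not $-2$; correspondingly the second piece must contribute $-1$ times the right-hand side, not $+1$. As written, your two contributions sum to $-1$ times the right-hand side rather than $+1$. This is exactly the ``sign bookkeeping'' you flag, not a conceptual gap. Also, $L_{X_i}\omega=0$, the Leibniz/naturality rules, and Proposition~\ref{kernel_prop} are not needed here; the paper's proof uses only the definition of $D_2$, the commutator identity for Lie derivatives, and the explicit form of $X_{D_2}$.
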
 
\begin{proof} Let $X_1$, $X_2$, $X_3$ be associated semi-Hamiltonian multivector fields, 
respectively. We apply the definition of $D_2$ to
rewrite the left side into
\begin{multline*}
-\textstyle\sum_{s\in Sh(2,1)}e\left(s;f_1,f_2,f_3\right)
	e\left(f_{s_1}\right)e\left(f_{s_2}\right)
		L_{X_{D_2}\left(f_{s_1},f_{s_2}\right)}f_{s_3}\\
+\textstyle\sum_{s\in Sh(2,1)}e\left(s;f_1,f_2,f_3\right)
	e\left(f_{s_1},f_{s_3}\right)e\left(f_{s_2},f_{s_3}\right)L_{X_{s_3}}\\
\cdot\left(e\left(f_{s_1}\right)L_{X_{s_1}}f_{s_2} 
	+ e\left(f_{s_1},f_{s_2}\right)e\left(f_{s_2}\right)
		L_{X_{s_2}}f_{s_1}\right)=
\end{multline*}
\begin{multline*}
-\textstyle\sum_{s\in Sh(2,1)}e\left(s;f_1,f_2,f_3\right)
	e\left(f_{s_1}\right)e\left(f_{s_2}\right)
		L_{X_{D_2}\left(f_{s_1},f_{s_2}\right)}f_{s_3}\\
+\textstyle\sum_{s\in Sh(2,1)}e\left(s;f_1,f_2,f_3\right)
	e\left(f_{s_2}\right)L_{X_{s_1}}L_{X_{s_2}}f_{s_3}\\
+\textstyle\sum_{s\in Sh(2,1)}e\left(s;f_1,f_2,f_3\right)
	e\left(f_{s_1}\right)e\left(f_{s_1},f_{s_2}\right)L_{X_{s_2}}L_{X_{s_1}}f_{s_3}=	
\end{multline*}
\begin{multline*}
-\textstyle\sum_{s\in Sh(2,1)}c_2e\left(s;f_1,f_2,f_3\right)
	e\left(f_{s_1}\right)e\left(f_{s_2}\right)
		L_{X_{D_2}\left(f_{s_1},f_{s_2}\right)}f_{s_3}\\
-\textstyle\sum_{s\in Sh(2,1)}e\left(s;f_1,f_2,f_3\right)
	e\left(f_{s_2}\right)\\
		\cdot\left(-e\left(f_{s_1},f_{s_2}\right)e\left(f_{s_1}\right)
		e\left(f_{s_2}\right)L_{X_2}L_{X_{s_1}}f_{s_3}
			-L_{X_{s_1}}L_{X_{s_2}}f_{s_3}\right)\;.
\end{multline*}
Using (\ref{multi_rules}) the second shuffle sum can be rewritten in terms of 
the Schouten bracket to get the expression
\begin{multline*}
-\textstyle\sum_{s\in Sh(2,1)}e\left(s;f_1,f_2,f_3\right)
	e\left(f_{s_1}\right)e\left(f_{s_2}\right)
		L_{X_{D_2}\left(f_{s_1},f_{s_2}\right)}f_{s_3}\\
-\textstyle\sum_{s\in Sh\left(2,1\right)}e\left(s;f_1,f_2,f_3\right)
	e\left(f_{s_2}\right)L_{[X_{s_2},X_{s_1}]}f_{s_3}
\end{multline*}
and expanding this 
\begin{multline*}
-\textstyle\sum_{s\in Sh\left(2,1\right)}e\left(s;f_1,f_2,f_3\right)
	e\left(f_{s_1}\right)e\left(f_{s_2}\right)
		L_{X_{D_2}\left(f_{s_1},f_{s_2}\right)}f_{s_3}\\
+\textstyle\frac{1}{2}\sum_{s\in Sh\left(2,1\right)}
	e\left(s;f_1,f_2,f_3\right)
		e\left(f_{s_1}\right)e\left(f_{s_2}\right)
			L_{-2c_2e(f_{s_1})[X_{s_2},X_{s_1}]}f_{s_3}
\end{multline*}
we applay the definition of the associated semi-Hamiltonian multivector field 
and collect the shuffle terms to arrive at
\begin{align*}		
-\textstyle\frac{1}{2}\sum_{s\in Sh\left(2,1\right)}
	c_2e\left(s;f_1,f_2,f_3\right)	e\left(f_{s_1}\right)e\left(f_{s_2}\right)
		L_{X_{D_2}\left(f_{s_1},f_{s_2}\right)}f_{s_3}\;.
\end{align*}
\end{proof}
\begin{remark}
The factor 'two' in  $-2e(f_1)[X_2,X_1]$ is the only reason 
for the graded Jacobi expression to not vanish. In addition the Jacobi equation
as given above, is still valid for semi-Hamiltonian forms.
\end{remark}
The next proposition uses the vanishing Jacobi identity of the Schouten bracket to show, that the Hamiltonian Jacobi expression is a closed form.

\begin{theorem}\label{J2_mvf}
For any three Hamiltonian forms $f_1$, $f_2$, $f_3\in H(M)$, the Jacobi expression
$$\smashoperator[r]{\sum_{s\in Sh(2,1)}}e\left(s;f_1,f_2,f_3\right)
D_2\left(D_2\left(f_{s_1},f_{s_2}\right),f_{s_3}\right)$$
is a closed Hamiltonian form. If $X_1$, $X_2$, $X_3$ are associated semi-Hamiltonian
multivector fields and $Y_1$, $Y_2$, $Y_3$ are associated Hamiltonian multivetor field, 
respectively, then an associated Hamiltonian multivector field is 
given by
\begin{equation}\label{J_2}
\tfrac{1}{2}\sum_{\mathclap{s\in Sh(2,1)}}e\left(s;f_1,f_2,f_3\right)
e\left(f_{s_2}\right)\left[\left[X_{s_3},X_{s_2}\right],Y_{s_1}\right]
\end{equation}
\end{theorem}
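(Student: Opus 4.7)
The plan is to prove both assertions — that $J_2 := \sum_{s \in Sh(2,1)} e(s;f_1,f_2,f_3) D_2(D_2(f_{s_1},f_{s_2}),f_{s_3})$ admits the claimed associated Hamiltonian multivector field, and that $J_2$ is closed — essentially in that order, since the closedness follows naturally once the second fundamental equation has been verified.

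First I would apply theorem (\ref{well_def_2}) iteratively to each summand $D_2(D_2(f_{s_1},f_{s_2}),f_{s_3})$. The inner bracket has, by (\ref{well_def_2}), explicit associated Hamiltonian and semi-Hamiltonian multivector fields $Y_{D_2}(f_{s_1},f_{s_2}) = [Y_{s_2},X_{s_1}] + e(f_{s_1},f_{s_2})[Y_{s_1},X_{s_2}]$ and $X_{D_2}(f_{s_1},f_{s_2}) = -2e(f_{s_1})[X_{s_2},X_{s_1}]$. Substituting these into the formula for the outer bracket produces a sum of triple Schouten brackets of the forms $[Y_{s_3},[X_{s_2},X_{s_1}]]$ and $[[Y_{s_i},X_{s_j}],X_{s_3}]$. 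Using the graded antisymmetry of the Schouten bracket and the graded Jacobi identity $[[A,B],C] \pm [[B,C],A] \pm [[C,A],B] = 0$, I would rewrite every triple bracket in the canonical form $[[X_{s_3},X_{s_2}],Y_{s_1}]$ and then collect shuffle-related terms. After careful tracking of the Koszul signs $e(s;f_1,f_2,f_3)$ and $e(f_{s_2})$, the six-term expansion collapses to the compact formula $\tfrac{1}{2}\sum_s e(s;f_1,f_2,f_3)\, e(f_{s_2})\, [[X_{s_3},X_{s_2}],Y_{s_1}]$. The fractional factor $\tfrac{1}{2}$ is the algebraic fingerprint of this cancellation, analogous to the factor $-2$ in the associated semi-Hamiltonian field of $D_2$.

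Second, to establish closedness, I would use that the just-proved relation $i_{Y_{J_2}}\omega = -J_2$ implies $dJ_2 = -d\, i_{Y_{J_2}}\omega = -L_{Y_{J_2}}\omega$ via $d\omega=0$. So closedness is equivalent to $L_{Y_{J_2}}\omega = 0$. Expanding each summand $L_{[[X_{s_3},X_{s_2}],Y_{s_1}]}\omega = [L_{[X_{s_3},X_{s_2}]},L_{Y_{s_1}}]\omega$ via the derivation identity $L_{[A,B]} = [L_A,L_B]$, and using that $[X_{s_3},X_{s_2}]$ is semi-Hamiltonian by the subalgebra corollary so that $L_{[X_{s_3},X_{s_2}]}\omega = 0$, only the mixed term survives:
\[
L_{[[X_{s_3},X_{s_2}],Y_{s_1}]}\omega = \pm L_{[X_{s_3},X_{s_2}]} L_{Y_{s_1}} \omega = \pm L_{[X_{s_3},X_{s_2}]} d(-f_{s_1}) = \mp d\,L_{[X_{s_3},X_{s_2}]} f_{s_1}.
\]
Hence $L_{Y_{J_2}}\omega$ is the exterior derivative of a shuffle sum, which I expect to vanish via the graded Jacobi identity of the Schouten bracket on the triple $(X_1,X_2,X_3)$ combined with the shuffle combinatorics.

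The main obstacle will be the sign and shuffle bookkeeping in the first step: reducing the six-term expansion that results from iterated application of (\ref{well_def_2}) to the compact formula requires identifying which pairs of terms combine via the graded Schouten Jacobi identity, and keeping track of how the Koszul sign factors $e(s;f_1,f_2,f_3)$ and $e(f_{s_i})$ transform under the relevant permutations of shuffle indices. The numerical coefficient $\tfrac{1}{2}$ is especially delicate, since it arises from over-counting each partition of $\{1,2,3\}$ into a pair and a singleton across the shuffle sum.
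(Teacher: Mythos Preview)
Your plan is sound, but it takes a noticeably different route from the paper in both parts.

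For the associated Hamiltonian multivector field, you propose to \emph{derive} the formula by iterating theorem~(\ref{well_def_2}) and then collapsing the resulting nine triple Schouten brackets via the graded Jacobi identity. The paper instead \emph{verifies} the claimed formula directly: it contracts the candidate~(\ref{J_2}) against~$\omega$, applies the rule $i_{[A,B]}\omega=\pm L_A i_B\omega - i_B L_A\omega$ together with $L_{[X_i,X_j]}\omega=0$, reindexes the shuffle sum so that the inner double bracket becomes exactly $X_{D_2}(f_{s_1},f_{s_2})=-2e(f_{s_1})[X_{s_2},X_{s_1}]$, and then recognises the result as $-J_2$ by invoking theorem~(\ref{jacobi_id}). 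This bypasses the entire six/nine-term expansion and all of the sign bookkeeping you flag as the main obstacle. Your derivation would work, but you are re-proving theorem~(\ref{jacobi_id}) implicitly inside the collapse; the paper simply cites it.

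For closedness, you route the argument through $dJ_2=-L_{Y_{J_2}}\omega$ and then expand $L_{[[X,X],Y]}\omega$. The paper avoids the Hamiltonian multivector fields $Y_i$ altogether here: it starts from the vanishing Schouten--Jacobi expression $\sum_s\pm\, i_{[[X_{s_3},X_{s_2}],X_{s_1}]}\omega=0$ (all \emph{semi}-Hamiltonian fields), applies the same Cartan rule with $L_{[X_i,X_j]}\omega=0$, commutes $d$ past $L$, and again invokes theorem~(\ref{jacobi_id}) to identify the result with $dJ_2$. Your route is correct---after your last step you are left with $\sum_s\pm\, L_{[X_{s_3},X_{s_2}]}df_{s_1}$, and substituting $df_{s_1}=-i_{X_{s_1}}\omega$ and using $i_{[A,B]}$ once more brings you to exactly the same triple-$X$ Schouten--Jacobi sum---but the paper's choice to work with $X$'s from the outset is shorter and makes the role of the Schouten Jacobi identity transparent.
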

\begin{proof}
To see that (\ref{J_2}) is an associated Hamiltonian multivector field compute
\begin{align*}
&\textstyle\frac{1}{2}\sum_{s\in Sh\left(2,1\right)}e\left(s;f_1,f_2,f_3\right)
	e\left(f_{s_2}\right)i_{[[X_{s_3},X_{s_2}],Y_{s_1}]}\omega\\
&=-\textstyle\frac{1}{2}\sum_{s\in Sh\left(2,1\right)}e\left(s;f_1,f_2,f_3\right)
	e\left(f_{s_2}\right)e\left(f_{s_1},f_{s_2}\right)e\left(f_{s_1},f_{s_3}\right)
			e\left(f_{s_2}\right)e\left(f_{s_3}\right)L_{[X_{s_3},X_{s_2}]}i_{Y_{s_1}}\omega\\
&=\textstyle\frac{1}{2}\sum_{s\in Sh\left(2,1\right)}e\left(s;f_1,f_2,f_3\right)
	e\left(f_{s_1}\right)e\left(f_{s_2}\right)
		L_{-2e\left(f_{s_1}\right)[X_{s_2},X_{s_1}]}i_{Y_{s_3}}\omega\\
&=\textstyle\frac{1}{2}\sum_{s\in Sh\left(2,1\right)}e\left(s;f_1,f_2,f_3\right)
	e\left(f_{s_1}\right)e\left(f_{s_2}\right)
		L_{X_{D_2}\left(f_{s_1},f_{s_2}\right)}i_{Y_{s_3}}\omega\\
&=\textstyle\frac{1}{2}\sum_{s\in Sh\left(2,1\right)}e\left(s;f_1,f_2,f_3\right)
	e\left(f_{s_1}\right)e\left(f_{s_2}\right)
		L_{X_{D_2}\left(f_{s_1},f_{s_2}\right)}i_{Y_{s_3}}\omega\\
&=\textstyle\frac{1}{2}\sum_{s\in Sh\left(2,1\right)}e\left(s;f_1,f_2,f_3\right)
	e\left(f_{s_1}\right)e\left(f_{s_2}\right)
		L_{X_{D_2}\left(f_{s_1},f_{s_2}\right)}f_{s_3}\\
&=-\textstyle\sum_{s\in Sh\left(2,1\right)}e\left(s;f_1,f_2,f_3\right)
	D_2\left(D_2\left(f_{s_1},f_{s_2}\right),f_{s_3}\right)\; .
\end{align*}
The closedness follows from the Jacobi identity of the Schouten bracket for 
multivector fields. Since $[X_i,X_j]$ is a semi-Hamiltonian multivector field, $L_{[X_i,X_j]}\omega=0$ and we have
\begin{align*}
0&=\textstyle\sum_{s\in Sh(2,1)}e\left(s;f_1,f_2,f_3\right)
		e\left(f_{s_1}\right)e\left(f_{s_3}\right)
			i_{[[X_{s_3},X_{s_2}],X_{s_1}]}\omega\\
&=\textstyle\sum_{s\in Sh(2,1)}e\left(s;f_1,f_2,f_3\right)
		e\left(f_{s_1},f_{s_2}\right)e\left(f_{s_1},f_{s_3}\right)
			e\left(f_{s_1}\right)e\left(f_{s_3}\right)
				L_{[X_{s_3},X_{s_2}]}i_{X_{s_1}}\omega\\
&=\textstyle\sum_{s\in Sh(2,1)}e\left(s;f_1,f_2,f_3\right)
	e\left(f_{s_2}\right)e\left(f_{s_3}\right)
		L_{[X_{s_2},X_{s_1}]}i_{X_{s_3}}\omega\\
&=-\textstyle\frac{1}{2}\sum_{s\in Sh(2,1)}e\left(s;f_1,f_2,f_3\right)
	e\left(f_{s_1}\right)e\left(f_{s_2}\right)e\left(f_{s_3}\right)
		L_{-2e\left(f_{s_1}\right)[X_{s_2},X_{s_1}]}df_{s_3}\\
&=-\textstyle\frac{1}{2}e\left(f_1\right)e\left(f_2\right)e\left(f_3\right)
	\sum_{s\in Sh(2,1)}e\left(s;f_1,f_2,f_3\right)
		L_{X_{D_2}\left(f_{s_1},f_{s_2}\right)}df_{s_3}\\
&=-\textstyle\frac{1}{2}e\left(f_1\right)e\left(f_2\right)e\left(f_3\right)
	\sum_{s\in Sh(2,1)}e\left(s;f_1,f_2,f_3\right)
	e\left(f_{s_1}\right)e\left(f_{s_2}\right)
		dL_{X_{D_2}\left(f_{s_1},f_{s_2}\right)}f_{s_3}\\
&=\textstyle e\left(f_1\right)e\left(f_2\right)e\left(f_3\right)
	d\left(\sum_{s\in Sh(2,1)}e\left(s;f_1,f_2,f_3\right)
		D_2\left(D_2\left(f_{s_1},f_{s_2}\right),f_{s_3}\right)\right)\;.
\end{align*}
\end{proof}
The existence of the associated multivector field (\ref{J2_mvf})
is tied to the assumption that for any Hamiltonian form $f$ there is
a multivector field $Y$ satisfying $i_Y\omega=-f$. If we drop that assumption and
work with semi-Hamiltonian forms instead, the equation
\begin{equation}\label{help_1}
i_Y\omega = -\textstyle\sum_{s\in Sh(2,1)}e\left(s;f_1,f_2,f_3\right)
D_2\left(D_2\left(f_{s_1},f_{s_2}\right),f_{s_3}\right)
\end{equation}
must not have a solution any more, as the following simple counterexample shows:
\begin{example}\label{counterexample_1}
Let $M$ be the linear manifold $\R^6$ with global coordinates $x_1,\ldots,x_6$
and $\omega$ the differential $4$-form
$$ 
dx^1\smwedge dx^3\smwedge dx^5 \smwedge dx^6 +
dx^2\smwedge dx^4\smwedge dx^5 \smwedge dx^6\;.
$$
$\left(\R^6,\omega\right)$ is a $3$-plectic manifold, 
since $\omega$ is closed and the contraction $i_{X^j\partial_j}\omega$ is the zero
form, only if any coordinate $X^j$ is the zero function.
Define
$$
\begin{array}{lcr}
f_1:= \left(x_4-x_1^2\,x_3\right)\,dx^5\smwedge dx^6 
&\mbox{ and }&
f_2:= \left(x_3+x_2^2\,x_4\right)\,dx^5\smwedge dx^6\;.
\end{array}
$$
These forms are Hamiltonian, since associated Hamiltonian multivector fields are given 
(for example) by
$$
\begin{array}{lcr}
Y_1:= \left(x_1^2\,x_3-x_4\right)\,\partial_3\smwedge \partial_1 
&,&
Y_2:= -\left(x_2^2\,x_4+x_3\right)\,\partial_4\smwedge \partial_2
\end{array}
$$
and associated semi-Hamiltonian multivector fields by
$$
\begin{array}{lcr}
X_1:= x_1^2\partial_1 -\partial_2 -2x_1x_3\partial_3 
&,&
X_2:= -\partial_1 -x_2^2\partial_2 +2x_2x_4\partial_4 \;.
\end{array}
$$
In this case the Schouten bracket reduces to the usual Lie bracket and is given by
$$
[X_2,X_1]=2x_1\partial_1+2x_2\partial_2-2x_3\partial_3-2x_4\partial_4\;.
$$
Next define the form
$$
f_3:= dx^1 \smwedge dx^2\; .
$$
Any closed form is semi-Hamiltonian and so is $f_3$. In contrast $f_3$ is 
not Hamiltonian, because there can't be a multivector field satisfying
$i_Y\omega=-f_3$. 

To compute the Jacobi expression $\textstyle\sum_{s\in Sh(2,1)}e\left(s;f_1,f_2,f_3\right)
D_2\left(D_2\left(f_{s_1},f_{s_2}\right),f_{s_3}\right)$ we use the fact that equation
(\ref{jacobi_id}) is still valid for semi-Hamiltonian forms. Since $f_3$ is closed we have to compute $L_{[X_2,X_1]}f_3$ only, but this is
$4 dx^1 \smwedge dx^2$.
It follows that the Jacobi expression is not a Hamiltonian form and that equation 
(\ref{help_1}) has no solution.
\end{example} 

This justifies our proposed definition of Hamiltonian forms. If we want a trilinear 
operator $D_3$, related to our sh-Lie $2$-bracket by the strong homotopy Jacobi
equation in dimension three a solution to the equation
\begin{multline*}
i_{X_{D_3}\left(f_1,f_2,f_3\right)}\omega =
-\textstyle\sum_{s\in Sh(2,1)}e\left(s;f_1,f_2,f_3\right)
D_2\left(D_2\left(f_{s_1},f_{s_2}\right),f_{s_3}\right)\\
-\textstyle\sum_{s\in Sh(1,2)}e\left(s;f_1,f_2,f_3\right)
D_3\left(D_1f_{s_1},f_{s_2},f_{s_3}\right)
\end{multline*}
is required and since the contraction operator is linear, this needs 
a solution of equation (\ref{help_1}).
\subsection{The Trilinear Operator}\label{subsec_trinary} The strong homotopy Lie $2$-bracket 
does not satisfy the graded Jacobi identity and it is the subject of this section to 
define a trilinear operator $D_3$ such that the \textit{strong homotopy} Jacobi equation 
(\ref{sh_Jacobi}) in dimension three is satisfied instead.  
\begin{definition}Let $\left(M, \omega\right)$ be an n-plectic manifold and $H\left(M\right)$ the $\Z$-graded vector space of Hamiltonian forms on $M$. The 
\textbf{strong homotopy Lie 3-bracket}
\begin{equation}
D_3:  H(M)\times H(M)\times H(M) \to H(M) \\
\end{equation}
is defined for any homogeneous $f_1,f_2,f_3\in H\left(M\right)$ and semi-Hamiltonian multivector 
fields $X_{D_2}\left(\cdot,\cdot\right)$ associated to the sh-Lie $2$-bracket, by 
$$
D_3(f_1,f_2,f_3):= 
	-\tfrac{1}{2}\sum_{\mathclap{s\in Sh(2,1)}}e\left(s;f_1,f_2,f_3\right)
	e\left(f_{s_1}\right)e\left(f_{s_2}\right)
		i_{X_{D_2}\left(f_{s_1},f_{s_2}\right)}f_{s_3}
$$ 
and is then extended to $H\left(M\right)$ by linearity. 
\end{definition}

Again the kernel property (\ref{kernel_prop}) guarantees that this definition does not
depend on the particular chosen associated semi-Hamiltonian multivector fields. Moreover
it qualifies as the trilinear operator in a Lie $\I$-algebra structure:

\begin{theorem}
The operator $D_3$ is well defined, graded symmetric and homogeneous of degree $-1$. For 
any three Hamiltonian forms $f_1$, $f_2$, $f_3 \in H(M)$ the strong homotopy Jacobi equation
in dimension four
\begin{multline}
D_1D_3\left(f_1,f_2,f_3\right)+\sum_{\mathclap{s\in Sh(1,2)}}e\left(s;f_1,f_2,f_3\right)
	D_3\left(D_1f_{s_1},f_{s_2},f_{s_3}\right)\\
		+\sum_{\mathclap{s\in Sh(2,1)}}e\left(s;f_1,f_2,f_3\right)
			D_2(D_2(f_{s_1},f_{s_2}),f_{s_3})=0
\end{multline}
is satisfied and for associated Hamiltonian multivector fields 
$Y_1$, $Y_2$, $Y_3$  an associated Hamiltonian multivector field 
$Y_{D_3}\left(f_1,f_2,f_3\right)$ is given by
\begin{equation}\label{ass_Ham_3}
\tfrac{1}{2}\sum_{\mathclap{s\in Sh(2,1)}}e\left(s;f_1,f_2,f_3\right)
	e\left(f_{s_1}\right)e\left(f_{s_2}\right) 
		Y_{s_3}\wedge X_{D_2}\left(f_{s_1},f_{s_2}\right)\;,
\end{equation}
while for associated semi-Hamiltonian multivector fields 
$X_1$, $X_2$, $X_3$ an associated semi-Hamiltonian multivector field $X_{D_3}\left(f_1,f_2,f_3\right)$ 
is given by
\begin{equation}\label{ass_sHam_3}
\sum_{\mathclap{s\in Sh(2,1)}}e\left(s;f_1,f_2,f_3\right)
	\left(e\left(f_{s_2}\right)\left[\left[X_{s_3},X_{s_2}\right],Y_{s_1}\right]
	+ X_{s_3}\wedge X_{D_2}\left(f_{s_1},f_{s_2}\right)\right)\;.
\end{equation}
\end{theorem}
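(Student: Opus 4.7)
The plan is to verify the six assertions in turn, using four tools: (i) Proposition \ref{kernel_prop} for independence from choices, (ii) the multivector contraction rules in (\ref{multi_rules}), (iii) the graded Cartan formula $L_X = [d,i_X]$, and (iv) the structural results for $D_2$ established in Theorems \ref{jacobi_id} and \ref{J2_mvf}. Well-definedness is immediate: the only freedom in the definition is the choice of semi-Hamiltonian multivector field $X_{D_2}(f_{s_1},f_{s_2})$, and since $f_{s_3}$ is Hamiltonian any two such choices differ by an element of $\ker(\omega)$ and produce the same $i_{X_{D_2}}f_{s_3}$ by Proposition \ref{kernel_prop}. Graded symmetry is the shuffle argument from Theorem \ref{well_def_2} combined with the graded symmetry of $D_2$ and of $X_{D_2}$. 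The degree count is $|X_{D_2}(f_{s_1},f_{s_2})| = deg(f_{s_1})+deg(f_{s_2})-1$ for non-closed arguments (the closed case contributes trivially by the kernel property), so $|i_{X_{D_2}}f_{s_3}| = n - (deg(f_1)+deg(f_2)+deg(f_3)) + 1$ and therefore $deg(D_3) = deg(f_1)+deg(f_2)+deg(f_3)-1$.

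For the associated Hamiltonian multivector field (\ref{ass_Ham_3}) I contract $\omega$ term by term using (\ref{multi_rules}):
\begin{equation*}
i_{Y_{s_3}\wedge X_{D_2}(f_{s_1},f_{s_2})}\,\omega \;=\; \pm\, i_{X_{D_2}(f_{s_1},f_{s_2})}\, i_{Y_{s_3}}\omega \;=\; \mp\, i_{X_{D_2}(f_{s_1},f_{s_2})}\, f_{s_3},
\end{equation*}
and collect Koszul signs to match the definition of $D_3$. For the associated semi-Hamiltonian multivector field (\ref{ass_sHam_3}) I split $X_{D_3}$ into the Schouten piece $[[X_{s_3},X_{s_2}],Y_{s_1}]$ and the wedge piece $X_{s_3}\wedge X_{D_2}(f_{s_1},f_{s_2})$. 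The Schouten piece contracts $\omega$ exactly as in the calculation at the start of Theorem \ref{J2_mvf}, producing (up to the overall $\tfrac12$ and signs) the $D_2$-Jacobi expression. The wedge piece contracts $\omega$ to $-i_{X_{D_2}(f_{s_1},f_{s_2})} df_{s_3}$ via $i_{X_{s_3}}\omega = -df_{s_3}$. Summing the two, the combination must be $-dD_3 = D_1 D_3$; this is precisely the content of the Jacobi equation verified in the next paragraph, so the two verifications share their hard step.

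For the displayed strong homotopy Jacobi equation (written as dimension four but with three arguments, consistent with (\ref{sh_Jacobi}) at $n=3$), the starting point is Theorem \ref{jacobi_id}:
\begin{equation*}
\sum_{s\in Sh(2,1)} e(s;f_1,f_2,f_3)\, D_2(D_2(f_{s_1},f_{s_2}),f_{s_3}) \;=\; -\tfrac{1}{2}\sum_{s\in Sh(2,1)} e(s;f_1,f_2,f_3)\, e(f_{s_1})e(f_{s_2})\, L_{X_{D_2}(f_{s_1},f_{s_2})} f_{s_3}.
\end{equation*}
I apply Cartan's formula $L_X = d\circ i_X \pm i_X \circ d$ to each summand of the right-hand side. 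The $d\circ i_X$ piece pulls the exterior derivative in front of $-\tfrac{1}{2}\sum e(s) e(f_{s_1})e(f_{s_2}) i_{X_{D_2}} f_{s_3} = D_3(f_1,f_2,f_3)$, and hence equals $dD_3 = -D_1D_3(f_1,f_2,f_3)$. The $\pm i_X\circ d$ piece is a sum of terms $\tfrac12 (\pm) e(\cdot)\,i_{X_{D_2}(f_{s_1},f_{s_2})} D_1 f_{s_3}$. Since by Proposition \ref{sym_differential} the vector field $X_{s_3}$ serves as an associated Hamiltonian multivector field for $D_1 f_{s_3}$, this piece is exactly what $D_3$ produces when one of its arguments is $D_1 f_{s_3}$; invoking the graded symmetry of $D_3$ to move $D_1$ into the first slot and reindexing the shuffle sum from $Sh(2,1)$ to $Sh(1,2)$ identifies it with $-\sum_{s\in Sh(1,2)} e(s;f_1,f_2,f_3)\, D_3(D_1 f_{s_1}, f_{s_2}, f_{s_3})$. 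Rearranging gives the required equation.

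The main obstacle is the Koszul sign and shuffle-sum bookkeeping, in particular matching the factor $\tfrac12$ from Theorem \ref{jacobi_id} against the relative sizes of $Sh(2,1)$ and $Sh(1,2)$ and verifying that the re-indexing consistent with graded symmetry of $D_3$ produces exactly the prefactors demanded by (\ref{sh_Jacobi}). The conceptual content is simple: Cartan's formula transforms the graded failure of Jacobi for $D_2$ (documented in Theorem \ref{jacobi_id}) into a $D_1$-coboundary for $D_3$, which is precisely what a homotopy of the Jacobi identity should do.
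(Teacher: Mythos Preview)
Your proposal is correct and follows essentially the same route as the paper: both arguments pivot on applying the Cartan decomposition $L_X = d\,i_X \pm i_X\,d$ to the Lie-derivative form of the $D_2$-Jacobiator from Theorem~\ref{jacobi_id}, with the $d\,i_X$ piece yielding $-D_1D_3$ and the $i_X\,d$ piece yielding the $D_3(D_1\cdot,\cdot,\cdot)$ sum, while the verifications of $Y_{D_3}$ and $X_{D_3}$ rest on (\ref{contract}) and Theorem~\ref{J2_mvf} exactly as you outline. One small correction: the factor $\tfrac{1}{2}$ is not reconciled by comparing $|Sh(2,1)|$ with $|Sh(1,2)|$ (both equal $3$); rather, when one argument of $D_3$ is the closed form $D_1f_{s_3}$, two of the three inner shuffle terms vanish because $X_{D_2}$ vanishes on closed arguments, so the surviving single term carries the $\tfrac{1}{2}$ from the definition of $D_3$ and matches the $\tfrac{1}{2}$ in Theorem~\ref{jacobi_id}.
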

\begin{proof}To see that the definition does not depend on the particular chosen associated 
Hamiltonian multivector fields, we use (\ref{kernel_prop}) and proceed as in (\ref{well_def_2}). 

Graded symmetry follows from the graded symmetry of the semi-Hamiltonian multivector fields 
$X_{D_2}\left(f_i,f_j\right)$.

The contraction $i_{X}$ is graded linear of (symmetric) degree $|\,X\,|$ for any 
homogeneous multivector field $X$ and since $|\,X_{D_2}(f_i,f_j)\,|=deg(f_j)+deg(f_j)-1$ 
for any $i,j\in \N_3$, the sh-Lie $3$-bracket is homogeneous of degree $-1$.

To see the strong homotopy Jacobi equation in dimension three, we apply the definition
of the differential and of $D_3$ to rewrite the left side
\begin{multline*}
D_1D_3\left(f_1, f_2, f_3\right)=-dD_3\left(f_1,f_2,f_3\right)=\\
\textstyle \frac{1}{2}\sum_{s\in Sh(2,1)}e\left(s;f_1,f_2, f_3\right)
	e\left(f_{s_1}\right)e\left(f_{s_2}\right)
		di_{X_{D_2}\left(f_{s_1},f_{s_2}\right)}f_{s_3}\;.
\end{multline*}
Regarding (\ref{jacobi_id}) and (\ref{def_lie}) we use 
$e\left(D_2\left(f_i,f_j\right)\right)=-e(f_i)e(f_j)$ and insert appropriate 
correction terms
\begin{multline*}
\textstyle \frac{1}{2}\sum_{s\in Sh(2,1)}e\left(s;f_1,f_2, f_3\right)
	e\left(f_{s_1}\right)e\left(f_{s_2}\right)\\
\cdot\left(di_{X_{D_2}\left(f_{s_1},f_{s_2}\right)}f_{s_3}
	+e\left(f_{s_1}\right)e\left(f_{s_2}\right)
		i_{X_{D_2}\left(f_{s_1},f_{s_2}\right)}df_{s_3}\right)\\
			-\textstyle \frac{1}{2}\sum_{s\in Sh(2,1)}e\left(s;f_1,f_2, f_3\right)
				i_{X_{D_2}\left(f_{s_1},f_{s_2}\right)}df_{s_3}
\end{multline*}
to rewrite the first shuffle sum into a sum over Lie derivations 
\begin{multline*}
\textstyle \frac{1}{2}\sum_{s\in Sh(2,1)}e\left(s;f_1,f_2, f_3\right)
	e\left(f_{s_1}\right)e\left(f_{s_2}\right)
		L_{X_{D_2}\left(f_{s_1},f_{s_2}\right)}f_{s_3}\\
			-\textstyle \frac{1}{2}\sum_{s\in Sh(2,1)}e\left(s;f_1,f_2, f_3\right)
	i_{X_{D_2}\left(f_{s_1},f_{s_2}\right)}df_{s_3}\;.
\end{multline*}
According to (\ref{jacobi_id}) the first shuffle sum is just the negative Jacobi
expression and rewriting the latter we get 
\begin{multline*}
-\textstyle \sum_{s\in Sh(2,1)}e\left(s;f_1,f_2, f_3\right)
	D_2\left(D_2\left(f_{s_1},f_{s_2}\right),f_{s_3}\right)\\
		-\textstyle \frac{1}{2}\sum_{s\in Sh(2,1)}e\left(s;f_{s_1},f_{s_2}, df_{s_3}\right)
			e\left(f_{s_1}\right)e\left(f_{s_2}\right)
				i_{X_{D_2}\left(f_{s_1},f_{s_2}\right)}df_{s_3}
\end{multline*}
which we an rewrite into
\begin{multline*}
-\textstyle \sum_{s\in Sh(2,1)}e\left(s;f_1,f_2, f_3\right)
	D_2\left(D_2\left(f_{s_1},f_{s_2}\right),f_{s_3}\right)\\
-\textstyle \sum_{s\in Sh(2,1)}e\left(s;f_1,f_2, f_3\right)
	D_3\left(f_{s_1},f_{s_2},D_1f_{s_3}\right)\;.
\end{multline*}

To see that $Y_{D_3}\left(f_1,f_2,f_3\right)$ is a Hamiltonian multivector field
associated to the image $D_3\left(f_1,f_2,f_3\right)$, just 
apply the contraction of $\omega$ along $Y_{D_3}$ using (\ref{contract}). 

To see that $X_{D_3}\left(f_1,f_2,f_3\right)$ is a semi-Hamiltonian multivector field
associated to $D_3\left(f_1,f_2,f_3\right)$, we write 
$Y:= \sum_{s\in Sh\left(2,1\right)}e\left(s;f_1,f_2,f_3\right)
	e\left(f_{s_2}\right)[[X_{s_3},X_{s_2}],Y_{s_1}]$ and use (\ref{J_2}) to compute:
\begin{align*}
i_{X_{D_{3}\left(f_1,f_2,f_3\right)}}\omega
&= i_Y\omega +\textstyle\sum_{s\in Sh\left(2,1\right)}e\left(s;f_1,f_2,f_3\right)
			i_{X_{s_3}\wedge X_{D_2}\left(f_{s_1},f_{s_2}\right)}\omega\\
&=-\textstyle \sum_{s\in Sh\left(2,1\right)}e\left(s;f_1,f_2, f_3\right)
					D_2\left(D_2\left(f_{s_1},f_{s_2}\right),f_{s_3}\right)\\
&\quad+\textstyle\sum_{s\in Sh\left(2,1\right)}e\left(s;f_1,f_2,f_3\right)
			i_{X_{s_3}\wedge X_{D_2}\left(f_{s_1},f_{s_2}\right)}\omega\\
&=-\textstyle \sum_{s\in Sh\left(2,1\right)}e\left(s;f_1,f_2, f_3\right)
					D_2\left(D_2\left(f_{s_1},f_{s_2}\right),f_{s_3}\right)\\
&\quad+\textstyle\sum_{s\in Sh\left(1,2\right)}e\left(s;f_1,f_2,f_3\right)
			i_{X_{D_2}\left(f_{s_1},f_{s_2}\right)}df_{s_3}\\
&=-\textstyle\sum_{s\in Sh\left(2,1\right)}e\left(s;f_1,f_2, f_3\right)
					D_2\left(D_2\left(f_{s_1},f_{s_2}\right),f_{s_3}\right)\\
&\quad+\textstyle\sum_{s\in Sh\left(1,2\right)}e\left(s;f_1,f_2,f_3\right)
			D_3\left(df_{s_1},f_{s_2},f_{s_3}\right)\\
&=D_1D_3\left(f_1,f_2,f_3\right) \; .			
\end{align*}
\end{proof}
\begin{remark}At this point we should stress again, that if there is 
\textit{no} Hamiltonian multivector field $Y$ satisfying 
$$i_Y\omega = -\textstyle\sum_{s\in Sh\left(2,1\right)}e\left(s;f_1,f_2,f_3\right)
	D_2\left(D_2\left(f_{s_1},f_{s_2}\right),f_{s_3}\right)\; ,
$$
then the previous proof shows, that the image $D_3(f_1,f_2,f_3)$ is not semi-Hamiltonian. 
Regarding example (\ref{counterexample_1}) this justifies our definition of Hamiltonian 
forms as differential forms satisfying \textbf{both} fundamental equations.
\end{remark}

\subsection{The general multilinear Operator}We give an inductive definition of 
$k$-linear operators, based at the sh-Lie $3$-bracket from the previous
section. These operators satisfy strong homotopy Jacobi equations in any dimension
and define a Lie $\I$-algebra structure on the set of Hamiltonian forms.
\begin{definition}\label{niceBracket}
Let $\left(M, \omega\right)$ be an n-plectic manifold and $H\left(M\right)$ the $\Z$-graded 
vector space of Hamiltonian forms on $M$. The \textbf{strong homotopy Lie k-bracket}
\begin{equation}
D_{k}: 	H(M)\times \cdots \times H(M) \to  H(M) \; ,
\end{equation}
is defined inductively for any $k>3$, homogeneous $f_1,\ldots,f_k\in H\left(M\right)$ 
and semi-Hamiltonian multivector fields $X_{D_{k-1}}\left(\cdot,\cdots,\cdot\right)$ 
associated to the strong homotopy Lie $(k-1)$-bracket $D_{k-1}$ by  
$$
D_k(f_1,\ldots ,f_{k}):=
-\sum_{\mathclap{s\in Sh(k-1,1)}}e\left(s;f_1,\ldots,f_{k}\right)
	e\left(f_{s_1}\right)\cdots e\left(f_{s_{k-1}}\right) 
		i_{X_{D_{k-1}}\left(f_{s_1},\ldots,f_{s_{k-1}}\right)}f_{s_{k}}
$$ 
and is then extended to $H\left(M\right)$ by linearity. 
\end{definition}
The induction base is the sh-Lie $3$-bracket. If we referee to the sh-Lie $k$-bracket for any $k\in\N$, then the differential $D_1$ is meant to be the sh-Lie $1$-bracket.

The following theorem is the central statement in this work and basically says, that the
sequence of sh-Lie $k$-brackets defines a Lie $\I$-algebra on the vector space of Hamiltonian
forms.
\begin{theorem}\label{main_theorem}
The operator $D_k$ is well defined, graded symmetric and homogeneous of degree $-1$ for any
$k\in\N$ and the strong homotopy Jacobi equation 
$$
\sum\nolimits_{i+j=n+1} \left(\sum\nolimits_{s\in Sh(j,i-1)}
e(s;f_1,\ldots,f_n)D_i\left(D_j \left(
f_{s_1}, \ldots, f_{s_j} \right), f_{s_{j+1}}, \ldots, f_{s_n}\right)\right) = 0
$$
is satisfied for any Hamiltonian forms $f_1,\ldots,f_n \in H(M)$ and in any dimension 
$n\in\N$. 

If $Y_1,\ldots,Y_k$ are associated Hamiltonian multivector fields,
an associated Hamiltonian multivector field 
$Y_{D_k}\left(f_1,\ldots,f_k\right)$ is given by
\begin{equation}\label{ass_Ham_k}
\sum_{\mathclap{s\in Sh(k-1,1)}}e\left(s;f_1,\ldots,f_{k}\right)
	e\left(f_{s_1}\right)\cdots e\left(f_{s_{k-1}}\right) 
		Y_{s_k}\wedge X_{D_{k-1}}\left(f_{s_1},\ldots,f_{s_{k-1}}\right)\;.
\end{equation}
If $X_1,\ldots,X_k$ are associated semi-Hamiltonian multivector field,
an associated semi-Hamiltonian multivector field
$X_{D_k}\left(f_1,\ldots,f_k\right)$ 
is given by
\begin{equation}\label{ass_sHam_k}
\sum_{\mathclap{s\in Sh(k-1,1)}}e\left(s;f_1,\ldots,f_{k}\right)
X_{s_{k}}\wedge X_{D_{k-1}}\left(f_{s_1},\ldots,f_{s_{k-1}}\right)
-Y_{J_k}\left(f_{s_1},\ldots,f_{s_{k}}\right)\;,
\end{equation} 
where the $Y_{J_k}(f_{s_1},\ldots,f_{s_{k+1}})$ is defined by the
equation
$$
i_{Y_{J_k}(f_{s_1},\ldots,f_{s_{k+1}})}\omega = 
-\sum^{i,j>1}_{i+j=k+1} (\sum_{s\in Sh(j,k-j)}
e(s)D_i\left(D_j \left(
f_{s_1}, \ldots, f_{s_j} \right), f_{s_{j+1}}, \ldots, f_{s_k}\right))
$$
\end{theorem}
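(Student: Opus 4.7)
The plan is to proceed by induction on $k$, with the cases $k=1,2,3$ already established in the preceding theorems. Fix $k\geq 4$ and assume that all claims of the theorem hold for $D_1,\ldots,D_{k-1}$, including the strong homotopy Jacobi equations in dimensions $\leq k-1$. The routine parts are dispatched first. Well-definedness follows from Proposition (\ref{kernel_prop}): any two choices of $X_{D_{k-1}}$ differ by an element of $\ker\omega$, which the Hamiltonian form $f_{s_k}$ annihilates under contraction. Graded symmetry of $D_k$ is inherited from that of $X_{D_{k-1}}$ (by induction) together with the shuffle sum. For homogeneity of degree $-1$, the inductive identity $|X_{D_{k-1}}|=\sum_{j<k} deg(f_{s_j})-1$ combined with $deg(i_{X_{D_{k-1}}}f_{s_k})=deg(f_{s_k})+|X_{D_{k-1}}|$ yields the total $\sum_i deg(f_i)-1$.

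Next I would verify the formula (\ref{ass_Ham_k}) for $Y_{D_k}$ by directly contracting $\omega$ against it. Using $i_{A\wedge B}\omega=i_B i_A\omega$, the induction hypothesis $i_{X_{D_{k-1}}}\omega=-dD_{k-1}(\cdots)$ and $i_{Y_{s_k}}\omega=-f_{s_k}$, the computation telescopes to $-D_k(f_1,\ldots,f_k)$ after a sign and shuffle rearrangement, mirroring the proof for $Y_{D_3}$.

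The core of the argument is the Jacobi equation at dimension $k$, from which the formula (\ref{ass_sHam_k}) for $X_{D_k}$ then follows. Mirroring the $D_3$ computation, I would start from $D_1D_k=-dD_k$, substitute the definition of $D_k$, and apply Cartan's formula $d\circ i_X = L_X -(-1)^{|X|} i_X\circ d$ inside each shuffle term. This splits the result into a Lie-derivative sum and an $i_{X_{D_{k-1}}}df_{s_k}$ sum; by graded symmetry and reshuffling, the latter equals $-\sum_{s\in Sh(1,k-1)} e(s;f_1,\ldots,f_k) D_k(D_1f_{s_1},f_{s_2},\ldots,f_{s_k})$, accounting for the $(i,j)=(k,1)$ contribution of the Jacobi equation in dimension $k$.

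The main obstacle is to identify the remaining Lie-derivative sum with the full residual $-\sum_{i,j\geq 2,\,i+j=k+1}\sum_{Sh(j,k-j)} e(s) D_i(D_j(\cdots),\cdots)$, that is, to establish a general-$k$ analogue of Theorem (\ref{jacobi_id}). For this I would substitute the inductive formula (\ref{ass_sHam_k}) for $X_{D_{k-1}}$, decomposing it into its wedge part $\sum X_{s_{k-1}}\wedge X_{D_{k-2}}$ and its correction $-Y_{J_{k-1}}$. The Leibniz rule for $L_{A\wedge B}$ applied to the wedge part, together with the defining equation $i_{Y_{J_{k-1}}}\omega=-\sum_{i,j\geq 2,\,i+j=k} D_i(D_j(\cdots),\cdots)$ used on the correction part, unfolds each $L_{X_{D_{k-1}}}f_{s_k}$ into nested bracket expressions $D_i(D_j,\cdots)$; a careful accounting of shuffles and signs, aided by the lower-dimensional Jacobi equations supplied by induction, collapses the sum into precisely the required form. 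With Jacobi in dimension $k$ secured, the formula for $X_{D_k}$ is then verified by direct contraction: $i_{X_{s_k}\wedge X_{D_{k-1}}}\omega = -i_{X_{D_{k-1}}}df_{s_k}$ assembles under the shuffle sum into $\sum D_k(D_1f_{s_1},\ldots,f_{s_k})$, while $-i_{Y_{J_k}}\omega = \sum_{i,j\geq 2} D_i(D_j,\cdots)$ by its defining equation, and combining both reproduces $D_1D_k$ as required.
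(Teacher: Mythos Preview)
Your treatment of well-definedness, symmetry, homogeneity, and the verifications of $Y_{D_k}$ and $X_{D_k}$ (granting Jacobi) matches the paper. The gap is in the Jacobi argument itself. You propose to substitute the inductive decomposition $X_{D_{k-1}}=(\text{wedge part})-Y_{J_{k-1}}$ into $L_{X_{D_{k-1}}}f_{s_k}$ and then invoke the defining equation $i_{Y_{J_{k-1}}}\omega=-\sum D_i(D_j,\cdots)$ on the correction term. But that equation constrains $i_{Y_{J_{k-1}}}\omega$, not $L_{Y_{J_{k-1}}}f_{s_k}$; since $Y_{J_{k-1}}$ is specified only implicitly as \emph{some} solution of the second fundamental equation, you have no direct handle on its Lie derivative against an arbitrary Hamiltonian form. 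Iterating the Leibniz rule on the wedge part merely pushes the same obstruction down to $Y_{J_{k-2}}$, $Y_{J_{k-3}}$, etc., so the promised ``collapse via lower Jacobi'' is not substantiated.

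The paper avoids this by never substituting the explicit formula for $X_{D_{k-1}}$. Instead it expands each $D_i(D_j(\cdots),\cdots)$ with $i>2$ directly from the definition of $D_i$, splitting the inner $(i-1,1)$-shuffle into those that keep $D_j$ in the first block versus the one that moves it to the last slot (Equations~\ref{my_equation_1}--\ref{my_equation_2}); a wedge-symmetry argument (Equation~\ref{my_equation_3}) kills the resulting cross terms pairwise, and the cases $i=2$, $j=2$ are handled jointly via the commutator rule for $i_{[\cdot,\cdot]}$ (Equation~\ref{my_equation_4}). The decisive ingredient you are missing is Proposition~\ref{_mvf_sh_J}: applying $D_1$ to the Jacobi equation in dimension $k-1$ and reading it through the first fundamental pairing yields a relation among the \emph{semi-Hamiltonian multivector fields} $X_{D_l}(D_j(\cdots),\cdots)$ themselves, modulo $\ker\omega$; contracting that relation against $f_{s_k}$ (which has the kernel property) is exactly what collapses the residual $i_{X_{D_l}(D_j(\cdots),\cdots)}f_{s_k}$ terms. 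Note also that the paper treats $k=4$ separately, since the factor $\tfrac12$ in the definition of $D_3$ breaks the uniform pattern used for $k>4$.
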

\begin{proof}(By induction)
For $k\leq 3$ this was shown in the previous sections. For the induction step assume
that all statements of the theorem are true for some $k\in\N$. We proof that they are
true for $(k+1)$:

First of all lets see that the definition does not depend on the particular chosen
associated semi-Hamiltonian multivector fields. This follows from proposition
(\ref{kernel_prop}). Since the difference of multivector fields associated to the 
same Hamilton form differ only in elements of the kernel of $\omega$ we can write
$
i_{X'_{D_{k}}\left(f_{1},\ldots,f_{k}\right)}f_{k+1}=
i_{X_{D_{k}}\left(f_{1},\ldots,f_{k}\right)+\xi}f_{k+1}=
i_{X_{D_{k}}\left(f_{1},\ldots,f_{k}\right)}f_{k+1}
$ because each $f_i$ has the kernel property. 

To see that $D_{k+1}$ is graded symmetric we use the assumed symmetry of any associated 
semi-Hamiltonian multivector field $X_{D_k}\left(f_1,\ldots,f_k\right)$ (up to elements of the kernel of $\omega$) and rewrite the expression
$D_{k+1}(f_1,\ldots,f_{k+1})$ in terms of the symmetric group like
$$
\textstyle\frac{1}{k!}\sum_{s\in S_{k+1}}e\left(s;f_1,\ldots,f_k\right)
e\left(D_{k}\left(f_{s_1},\ldots,f_{s_{k}}\right)\right) 
			i_{X_{D_{k}}\left(f_{s_1},\ldots,f_{s_{k}}\right)}f_{s_{k+1}}\; ,
$$
which is graded symmetric.

To see that the operator is homogeneous of degree $-1$, assume that every 
argument is homogeneous. Then the degree
$deg(i_{X_{D_{k}(f_{1},\ldots,f_{k})}}f_{k+1})=\sum deg(f_j)-1$ follows from the
assumption that $D_{k}\left(f_1,\ldots,f_k\right)$ is homogeneous of degree $-1$. 

The proof of the strong homotopy Jacobi equation is a very long 
calculations. According to a better readable text, we put it into appendix
(\ref{main_proof}).

To see that (\ref{ass_Ham_k}) is a Hamiltonian multivector field associated 
to $D_k(f_1,\ldots,f_k)$ just compute $i_{Y_{D_k}\left(f_1,\ldots,f_k\right)}\omega$.

To see that (\ref{ass_sHam_k}) is a semi-Hamiltonian multivector field associated 
to $D_k(f_1,\ldots,f_k)$ we compute
\begin{align*}
&i_{X_{D_k}\left(f_1,\ldots,f_k\right)}\omega\\ 
&=\textstyle\sum_{s\in Sh(k-1,1)}e\left(s;f_1,\ldots,f_{k}\right)
	i_{X_{s_{k}}\wedge X_{D_{k-1}}\left(f_{s_1},\ldots,f_{s_{k-1}}\right)}\omega
		-i_{Y_{J_k}\left(f_{s_1},\ldots,f_{s_{k}}\right)}\omega\\
&=\textstyle\sum_{s\in Sh(k-1,1)}e\left(s;f_1,\ldots,f_{k}\right)
	i_{X_{D_{k-1}}\left(f_{s_1},\ldots,f_{s_{k-1}}\right)}df_{s_k}\\
&\quad-\textstyle\sum^{i,j>1}_{i+j=k+1}\sum_{s\in Sh(j,k-j)}e\left(s;f_1,\ldots,f_{k}\right)
	D_i\left(D_j \left(f_{s_1}, \ldots, f_{s_j} \right), f_{s_{j+1}}, \ldots, f_{s_k}\right)\\
&=-\textstyle\sum^{i>1}_{i+j=k+1}\sum_{s\in Sh(j,k-j)}e\left(s;f_1,\ldots,f_{k}\right)
	D_i\left(D_j \left(f_{s_1}, \ldots, f_{s_j} \right), f_{s_{j+1}}, \ldots, f_{s_k}\right)
\end{align*}
And since the strong homotopy Jacobi equation is satisfied in dimension $k$ the last sum equals
$D_1D_k(f_1,\ldots,f_k)$ and $X_{D_k}\left(f_1,\ldots,f_k\right)$ is a solution to the first
fundamental equation.
\end{proof}
Since the set of Hamiltonian forms is an $\Z$-graded vector space an immediate
consequence is 
\begin{corollary}[The Lie $\I$-Algebra of Hamiltonian Forms]
Let $\left(M,\omega\right)$ be an $n$-plectic manifold and $H(M)$ the
$\Z$-graded vector space of Hamiltonian forms. The sequence 
$(D_k)_{k\in \N}$ of sh-Lie $k$-brackets defines a Lie $\I$-algebra on
$H(M)$.
\end{corollary}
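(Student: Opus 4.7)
The plan is to verify, one clause at a time, that $(H(M),(D_k)_{k\in\N})$ satisfies every requirement in the definition of a Lie $\I$-algebra, and to observe that each clause has already been established in the preceding material. So the proof is essentially a bookkeeping argument that assembles ingredients rather than proving anything new.

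First I would invoke Proposition \ref{vector_space}, together with the shift to the symmetric grading, to confirm that $H(M)$ is a $\Z$-graded vector space. Next, the $k$-multilinearity of each $D_k$ is immediate from Definition \ref{niceBracket}, where the bracket is defined on homogeneous arguments and then extended by linearity. The remaining conditions are precisely the content of Theorem \ref{main_theorem}: each $D_k$ is well defined (independent of the chosen associated semi-Hamiltonian multivector fields, by Proposition \ref{kernel_prop}), graded symmetric, homogeneous of symmetric degree $-1$, and the strong homotopy Jacobi equation
\[
\sum_{i+j=n+1}\;\sum_{s\in Sh(j,i-1)}e(s;f_1,\ldots,f_n)\,
D_i\bigl(D_j(f_{s_1},\ldots,f_{s_j}),f_{s_{j+1}},\ldots,f_{s_n}\bigr)=0
\]
holds for every $n\in\N$ and every $f_1,\ldots,f_n\in H(M)$.

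I would then make one small bookkeeping remark to make sure no dimension of the Jacobi identity is silently dropped: the sh-Lie $1$-bracket is identified with the $n$-plectic differential $D_1=-d$, so the $n=1$ case is $D_1\circ D_1=0$, which holds because $d^2=0$ and $D_1$ restricts to $H(M)$ by Proposition \ref{sym_differential}. The cases $n=2$ and $n=3$ are the Jacobi equations verified in Subsections \ref{subsec_binary} and \ref{subsec_trinary}, and all cases $n\geq 4$ are covered by the induction in Theorem \ref{main_theorem}.

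There is no genuine obstacle in this proof; the whole statement reduces to noting that everything on the Lie $\I$-algebra checklist has already been certified. If anything, the only point that deserves a line is the compatibility of $D_1$ with the higher brackets in the $n=1$ slot of each higher Jacobi equation, which is already absorbed into the inductive formulation appearing in Theorem \ref{main_theorem}.
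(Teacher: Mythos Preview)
Your proposal is correct and matches the paper's approach: the paper states the corollary as an immediate consequence of Theorem~\ref{main_theorem} together with the $\Z$-graded vector space structure on $H(M)$, without giving any separate proof. Your write-up is simply a more explicit unpacking of that same observation, so there is nothing substantively different to compare.
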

\section{Conclusion and Outlook}
We defined an Lie $\I$-algebra on Hamiltonian forms on any 
$n$-plectic manifold $M$. However since differential forms are moreover
sections of a vector bundle, the question arises, whether or not
Hamiltonian forms are vector bundle sections and hence have a Lie $\I$-algebroid 
structure in addition.

As seen in (\ref{negative_module}) the answer is not trivial, since Hamiltonian forms
are at least not a $C^\I(M)$-module, but a $C_\omega^\I(M)$-module instead.

\begin{appendix}
\section{Proof of the sh-Jacobi equation in \ref{main_theorem}}\label{main_proof}
We proof the strong homotopy Jacobi equation in dimension $k$ under
the assumption that it is satisfied in dimension $(k-1)$. 
According to a better readable text we start with some auxiliary calculations, necessary
to keep the main part as simple as possible.

The computation is different for $k=4$ and will be treated after the general situation.
We assume $k>4$, that $f_1,\ldots,f_{k} \in H(M)$ 
are homogeneous Hamiltonian forms with associated
semi-Hamiltonian multivector fields $X_1,\ldots,X_{k}$ and that graded symmetric 
associated semi-Hamiltonian multivector fields of $D_{j}$ are given by (\ref{ass_sHam_k}) 
for any $j\smin \ordinal{k}$.

The first step is to rewrite the strong homotopy Jacobi equation, according to the definition 
of the sh-Lie $k$-bracket. Since the sh-Lie $3$-bracket differs from the higher
brackets by the factor $\frac{1}{2}$ only, we define $c_k:=1$ for any $k\neq 3$ and 
$c_3:=\frac{1}{2}$, to handle them equally. Nevertheless the sh-Lie $2$-bracket is 
different and we consider the situation $i>2$ first:
\begin{my_equation}\label{my_equation_1}
\begin{align*}
&\textstyle\sum_{s\in Sh(j,i-1)}e\left(s;f_1,\ldots,f_k\right)  
	D_{i}\left(D_{j}\left(f_{s_{1}},\ldots,f_{s_{j}}\right),
		f_{s_{j+1}},\ldots,f_{s_{k}}\right) =\\
&c_i\textstyle\sum_{s\in Sh(j,i-2,1)}e\left(s;f_1,\ldots,f_k\right)	
		e\left(f_{s_1}\right)\cdots e\left(f_{s_{k-1}}\right)
			i_{X_{D_{i-1}}\left(D_j\left(f_{s_1},\ldots,f_{s_j}\right),
				f_{s_{j+1}},\ldots,f_{s_{k-1}}\right)}f_{s_k}\\
&-c_i\textstyle\sum_{s\in Sh(i-1,j)}e\left(s;f_1,\ldots,f_k\right)
	i_{X_{D_{i-1}}\left(f_{1},\ldots,f_{i-1}\right)}D_j\left(f_{s_i},\ldots,f_{s_k}\right)
\end{align*}
for any $i,j\in \N$ with $i>2$ and $i+j=k+1$.
\end{my_equation}
\begin{proof} We can split the definition of the sh-Lie $k$-bracket into two parts. 
A summation over shuffles that fix the first element and a remaining term, where the first element is shuffled to the last position. 

In particular let
$Sh(\cdot,i,j)$ be the set of permutations 
$\left(1,\mu_1,\ldots,\mu_i,\nu_1\ldots,\nu_j\right)$,
subject to the conditions $\mu_1<\ldots<\mu_i$ and $\nu_1<\ldots<\nu_j$. Then
\begin{align*}
& D_i\left(f_1,\ldots, f_i\right)\\ 
&=-c_i\textstyle\sum_{s\in Sh(\cdot,i-2,1)}e(s;f_1,\ldots,f_i)
			e\left(f_1\right)e\left(f_{s_2}\right)\cdots e\left(f_{s_{i-1}}\right)
				i_{X_{D_{i-1}}\left(f_1,f_{s_2},\ldots,f_{s_{i-1}}\right)}f_{s_i}\\
&\quad-c_ie\left(\left(i,1,\ldots,i-1\right);f_1,\ldots,f_i\right)
			e\left(f_{2}\right)\cdots e\left(f_{i}\right)
				i_{X_{D_{i-1}}\left(f_{2},\ldots,f_{i}\right)}f_1\;.
\end{align*}
If we substitute the operator $D_j$ for the first argument of $D_i$,  
we can rewrite the left side of equation (\ref{my_equation_1}) into
\begin{align*}
&-c_i\textstyle\sum_{s\in Sh(j,i-1)}e\left(s;f_1,\ldots,f_k\right)\left[
	\sum_{t\in Sh(\cdot,i-2,1)}
		e\left(t;D_j\left(f_{s_1},\ldots,f_{s_j}\right),f_{s_{j+1}}
			\ldots,f_{s_k}\right)\right.\\
&\quad\cdot	
			e\left(D_j\left(f_{s_1},\ldots,f_{s_j}\right)\right)
				e\left(f_{t\circ s_{j+1}}\right)\cdots e\left(f_{t\circ s_{k-1}}\right)\\
&\quad\cdot	\left.i_{X_{D_{i-1}}\left(D_j\left(f_{s_1},\ldots,f_{s_j}\right),
							f_{t\circ s_{j+1}},\ldots,f_{t\circ s_{k-1}}\right)}f_{t\circ s_k}\right]\\
&\quad-c_i\textstyle\sum_{s\in Sh(j,i-1)}e\left(s;f_1,\ldots,f_k\right)
	e\left(\left(i,1,2\ldots,i-1\right);D_j\left(f_{s_1},\ldots,f_{s_j}\right),
		f_{s_{j+1}},\ldots,f_{s_k}\right)\\
&\quad\cdot e\left(f_{s_{j+1}}\right)\cdots e\left(f_{s_{k}}\right)
				i_{X_{D_{i-1}}\left(f_{s_{j+1}},\ldots,f_{s_k}\right)}D_j\left(f_{s_1},\ldots,f_{s_j}\right).
\end{align*}
To simplify this, apply 
$e\left(D_j\left(f_{s_1},\ldots,f_{s_j}\right)\right)=-e(f_{s_1})\cdots e(f_{s_j})$
and the bijective map
\begin{multline*}
Sh(j,i-1)\times Sh(\cdot,i-2,1) \to Sh(j,i-2,1)\\
\left((\mu_1,\ldots,\mu_j,\nu_1,\ldots,\nu_{i-1}),
(1,\lambda_1,\ldots,\lambda_{i-2},\kappa)\right)\mapsto\\
(id_{S_j}\times(\lambda_1,\ldots,\lambda_{i-2},\kappa))\circ
(\mu_1,\ldots,\mu_j,\nu_1,\ldots,\nu_{i-1}),
\end{multline*}
on the first part, expand
$e\left(\left(2,\ldots,i,1\right);D_j\left(f_{s_1},\ldots,f_{s_j}\right),
f_{s_{j+1}},\ldots,f_{s_k}\right)$ according to
\begin{multline*}
e\left(\left(2,\ldots,i,1\right);D_j\left(f_{s_1},\ldots,f_{s_j}\right),
f_{s_{j+1}},\ldots,f_{s_k}\right)=\\
e\left(D_j\left(f_{s_1},\ldots,f_{s_j}\right),f_{s_{j+1}}\right)\cdots
e\left(D_j\left(f_{s_1},\ldots,f_{s_j}\right),f_{s_{k}}\right)=\\
e(f_{s_1},f_{s_{j+1}})\cdots e(f_{s_j},f_{s_{j+1}})e(f_{s_{j+1}})\cdots
e(f_{s_1},f_{s_{k}})\cdots e(f_{s_j},f_{s_{k}})e(f_{s_{k}})
\end{multline*}
and use the bijective map
$$Sh(j,i-1)\to Sh(i-1,j);
(\mu_1,\ldots,\mu_j,\nu_1,\ldots,\nu_{i-1})\mapsto 
(\nu_1,\ldots,\nu_{i-1},\mu_1,\ldots,\mu_{j})
$$
on the second part to rewrite the substituted expression into the right side of equation
(\ref{my_equation_1}).
\end{proof}
The case $i\leq 2$ will be treated later. First we applying the definition of $D_j$ to expend equation (\ref{my_equation_1}) further. Again, since $D_2$ is different, we additionally assume $j>2$ and get
\begin{my_equation}\label{my_equation_2}
\begin{align*}
&\textstyle\sum_{s\in Sh(j,i-1)}e\left(s;f_1,\ldots,f_{k}\right)
	D_{i}\left(D_{j}\left(f_{s_{1}},\ldots,f_{s_{j}}\right),
		f_{s_{j+1}},\ldots,f_{s_{k}}\right)\\
&=c_i\textstyle\sum_{s\in Sh(j,i-2,1)}e\left(s;f_1,\ldots,f_{k}\right)
	e\left(f_{s_{1}}\right)\cdots e\left(f_{s_{k-1}}\right)
		i_{X_{D_{i-1}}\left(D_{j}\left(f_{s_{1}},\ldots,f_{s_{j}}\right)
			\ldots,f_{s_{k-1}}\right)}f_{s_{k}}\\
&\quad+c_ic_j\textstyle\sum_{s\in Sh(i-1,j-1,1)}e\left(s;f_1,\ldots,f_k\right)
	e(f_{s_i})\cdots e(f_{s_{k-1}})\\
&\quad\quad\cdot i_{X_{D_{j-1}}\left(f_{s_i},\ldots,f_{s_{k-1}}\right)\wedge
			X_{D_{i-1}}\left(f_{s_1},\ldots,f_{s_{i-1}}\right)}f_{s_k}
\end{align*}
for any $i\,,j>2$ and $i+j=k+1$.
\end{my_equation}
This is obvious from equation (\ref{my_equation_1}) and we skip the calculation.
We can exploit the symmetry of the wedge product in the previous expression
when we sum over over all possible combinations of $i$ and $j$. To be more precise
\begin{my_equation}\label{my_equation_3}
\begin{multline*}
\textstyle\sum_{i+j=k+1}^{i,j>2}c_ic_j\sum_{s\in Sh(i-1,j-1,1)}
	e\left(s;f_1,\ldots,f_k\right)\\
		\cdot e\left(f_{s_i}\right)\cdots e\left(f_{s_{k-1}}\right)
			i_{X_{D_{j-1}}\left(f_{s_i},\ldots,f_{s_{k-1}}\right)\wedge
				X_{D_{i-1}}\left(f_{s_1},\ldots,f_{s_{i-1}}\right)}f_{s_k} = 0
\end{multline*}
\end{my_equation}
\begin{proof}For any shuffle 
$s:=(\mu_1,\ldots,\mu_{i-1},\nu_1,\ldots,\nu_{j-1},\delta)\in Sh(i-1,j-1,1)$ there
is exactly one shuffle 
$s^*:=(\nu_1,\ldots,\nu_{j-1},\mu_1,\ldots,\mu_{i-1},\delta)\in Sh(j-1,i-1,1)$ and
the identity $s^{**}=s$ holds. From the graded symmetry of the wedge product 
we get
\begin{align*}
&e\left(s;f_1,\ldots,f_k\right)
	e\left(f_{\nu_{1}}\right)\cdots e\left(f_{\nu_{j-1}}\right)
		i_{X_{D_{j-1}}\left(f_{\nu_{1}},\ldots,f_{\nu_{j-1}}\right)\wedge 
			X_{D_{i-1}}\left(f_{\mu_1},\ldots,f_{\mu_{i-1}}\right)}f_{\delta}\\
&= e\left(s;f_1,\ldots,f_k\right)
	e\left(f_{\nu_{1}}\right)\cdots e\left(f_{\nu_{j-1}}\right)
		e\left(D_{i-1}\left(f_{\mu_{1}},\ldots,f_{\mu_{i-1}}\right),
			D_{j-1}\left(f_{\nu_{i}},\ldots,f_{\nu_{j-1}}\right)\right)\\
&\quad\cdot	i_{X_{D_{i-1}}\left(f_{\mu_{1}},\ldots,f_{\mu_{i-1}}\right)\wedge 
			X_{D_{j-1}}\left(f_{s_{\nu_1}},\ldots,f_{\nu_{j-1}}\right)}f_{\delta}\\
&=-e\left(s^*;f_1,\ldots,f_k\right)
	e\left(f_{\mu_{1}}\right)\cdots e\left(f_{\mu_{i-1}}\right)
		i_{X_{D_{i-1}}\left(f_{\mu_{1}},\ldots,f_{\mu_{i-1}}\right)\wedge 
			X_{D_{j-1}}\left(f_{\nu_1},\ldots,f_{\nu_{j-1}}\right)}f_{\delta}
\end{align*}
and consequently each terms indexed by $s$ cancel with the unique term indexed
by $s^*$ in (\ref{my_equation_3}).
\end{proof}
Now we look at the situation where either $i=2$ or $j=2$. As it turns out
it is advantageous to join them in a single equation. Since $k>4$ and $i+j=k+1$ we get
\begin{my_equation}\label{my_equation_4}
\begin{align*}
&\textstyle\sum_{s\in Sh(k-1,1)}e\left(s;f_1,\ldots,f_k\right)
	D_{2}\left(D_{k-1}\left(f_{s_{1}},\ldots,f_{s_{k-1}}\right),f_{s_{k}}\right)\\
&\quad +\textstyle\sum_{s\in Sh(2,k-2)}e\left(s;f_1,\ldots,f_k\right)
	D_{k-1}\left(D_{2}\left(f_{s_{1}},f_{s_{2}}\right),f_{s_{3}},\ldots,f_{s_{k}}\right)\\
&=-\textstyle\sum_{s\in Sh(k-1,1)}e\left(s;f_1,\ldots,f_k\right)
	e(f_{s_{1}})\cdots e(f_{s_{k-1}})
		L_{X_{D_{k-1}}\left(f_{s_{1}},\ldots,f_{s_{k-1}}\right)}f_{s_{k}}\\
&\quad+\textstyle\frac{1}{2}\sum_{s\in Sh(k-2,1,1)}e\left(s;f_1,\ldots,f_k\right)
	e\left(f_{s_{1}}\right)\cdots e\left(f_{s_{k-1}}\right)
		i_{X_{D_{2}}\left(D_{k-2}\left(f_{s_{1}},\cdots,f_{s_{k-2}}\right),
			f_{s_{k-1}}\right)}f_{s_{k}}\\
&\quad+\textstyle\sum_{s\in Sh(2,k-3,1)}e\left(s;f_1,\ldots,f_k\right)
	e\left(f_{s_{1}}\right)\cdots e\left(f_{s_{k-1}}\right)
		i_{X_{D_{k-2}}\left(D_{2}\left(f_{s_{1}},f_{s_{2}}\right),f_{s_{3}},\ldots,
			f_{s_{k-1}}\right)}f_{s_{k}}
\end{align*}
\end{my_equation}
\begin{proof} We transform the first shuffle sum according to the
definition of the sh-Lie $2$-bracket into
\begin{multline*}
\textstyle\sum_{s\in Sh(k-1,1)}e\left(s;f_1,\ldots,f_k\right)
	D_{2}\left(D_{k-1}\left(f_{s_{1}},\cdots,f_{s_{k-1}}\right),f_{s_{k}}\right)=\\
-\textstyle\sum_{s\in Sh(k-1,1)}e\left(s;f_1,\ldots,f_k\right)
	e\left(f_{s_1}\right)\cdots e\left(f_{s_{k-1}}\right)
		L_{X_{D_{k-1}}\left(f_{s_{1}},\cdots,f_{s_{k-1}}\right)}f_{s_{k}}\\	
+\textstyle\sum_{s\in Sh(k-1,1)}e\left(s;f_1,\ldots,f_k\right)
		e\left(D_{k-1}\left(f_{s_{1}},\ldots,f_{s_{k-1}}\right),f_{s_{k}}\right)
			e\left(f_{s_{k}}\right)\\
				\cdot L_{X_{s_{k}}}
					D_{k-1}\left(f_{s_{1}},\cdots,f_{s_{k-1}}\right)\;,
\end{multline*}
use the map 
$Sh(k-1,1)\to Sh(1,k-1);(\mu_1,\ldots,\mu_{k-1},\nu)\mapsto(\nu,\mu_1,\ldots,\mu_{k-1})$
and the identity 
$e\left(D_{k-1}\left(f_{s_{1}},\ldots,f_{s_{k-1}}\right),f_{s_{k}}\right)
e\left(f_{s_{k}}\right)= e(f_{s_1},f_{s_{k}})\cdots e(f_{s_{k-1}},f_{s_{k}})$
to canonicalize the expression
\begin{multline*}
-\textstyle\sum_{s\in Sh(k-1,1)}e\left(s;f_1,\ldots,f_k\right)
	e\left(f_{s_{1}}\right)\cdots e\left(f_{s_{k-1}}\right)
		L_{X_{D_{k-1}}\left(f_{s_{1}},\ldots,f_{s_{k-1}}\right)}f_{s_{k}}\\
+\textstyle\sum_{s\in Sh(1,k-1)}e\left(s;f_1,\ldots,f_k\right)
	L_{X_{s_{1}}}D_{k-1}\left(f_{s_{2}},\ldots,f_{s_{k}}\right)\;.
\end{multline*}
Then apply the definition of $D_{k-1}$, 
taking $(k-1)\cdot|Sh(1,k-1)|=|Sh(1,k-2,1)|$ into account and 
rewrite the second shuffle sum using equation (\ref{my_equation_1}) to combine
both
\begin{align*}
&-\textstyle\sum_{s\in Sh(k-1,1)}e\left(s;f_1,\ldots,f_k\right)
	e\left(f_{s_{1}}\right)\cdots e\left(f_{s_{k-1}}\right)
		L_{X_{D_{k-1}}\left(f_{s_{1}},\cdots,f_{s_{k-1}}\right)}f_{s_{k}}\\
&-\textstyle\sum_{s\in Sh(1,k-2,1)}e\left(s;f_1,\ldots,f_k\right)
	e\left(f_{s_{2}}\right)\cdots e\left(f_{s_{k-1}}\right)L_{X_{s_{1}}}
		i_{X_{D_{k-2}}\left(f_{s_{2}},\cdots,f_{s_{k-1}}\right)}f_{s_{k}}\\
&+\textstyle\sum_{s\in Sh(2,k-3,1)}e\left(s;f_1,\ldots,f_k\right)	
			e\left(f_{s_1}\right)\cdots e\left(f_{s_{k-1}}\right)
			i_{X_{D_{k-2}}\left(D_2\left(f_{s_1},f_{s_2}\right),
							f_{s_{3}},\ldots,f_{s_{k-1}}\right)}f_{s_k}\\
&-\textstyle\sum_{s\in Sh(k-2,1,1)}e\left(s;f_1,\ldots,f_k\right)e(f_{s_{k-1}})
	i_{X_{D_{k-2}}\left(f_{s_1},\ldots,f_{s_{k-2}}\right)}L_{X_{s_{k-1}}}f_{s_{k}}.
\end{align*}
Using $Sh(1,k-2,1)\to Sh(k-2,1,1)\,;\, (\lambda,\mu_1,\ldots,\mu_{k-2},\nu)
\mapsto (\mu_1,\ldots,\mu_{k-2},\lambda,\nu)$ we reorder the first shuffle sum
and join it with the last shuffle sum, applying the second equation in (\ref{multi_rules})
to get:
\begin{align*}
&\textstyle\sum_{s\in Sh(k-2,1,1)}e\left(s;f_1,\ldots,f_k\right)e\left(f_{s_{k-1}}\right)
	i_{[X_{s_{k-1}},X_{D_{k-2}}\left(f_{s_{1}},\ldots,f_{s_{k-2}}\right)]}f_{s_{k}}\\
&-\textstyle\sum_{s\in Sh(k-1,1)}e\left(s;f_1,\ldots,f_k\right)
	e\left(f_{s_{1}}\right)\cdots e\left(f_{s_{k-1}}\right)
		L_{X_{D_{k-1}}\left(f_{s_{1}},\ldots,f_{s_{k-1}}\right)}f_{s_{k}}\\
&+\textstyle\sum_{s\in Sh(2,k-3,1)}e\left(s;f_1,\ldots,f_k\right)	
			e\left(f_{s_1}\right)\cdots e\left(f_{s_{k-1}}\right)
			i_{X_{D_{k-2}}\left(D_2\left(f_{s_1},f_{s_2}\right),
							f_{s_{3}},\ldots,f_{s_{k-1}}\right)}f_{s_k}.
\end{align*}
Finall, the right side of equation (\ref{my_equation_2}) follows from the definition 
of the associated semi-Hamiltonian multivector field
$X_{D_2}(X_n,X_m)$, for appropriate $n,m\in \N$.
\end{proof}
Terms where either $i=1$ or $j=1$ needs no precalculation. Nevertheless we need the
sh-Jacobi equation in its semi-Hamiltonian multivector field incarnation:
\begin{prop}\label{_mvf_sh_J}Suppose that the strong homotopy Jacobi equation is
satisfied in dimension $(k-1)$. Then there is a multivector field $\xi \in \ker(\omega)$ with 
\begin{flalign*}
&\textstyle\sum_{i+j=k}^{j>1,i>1}
		\sum_{s\in Sh\left(j,i-1\right)}e\left(s;f_1,\ldots,f_k\right)
			X_{D_{i}}\left(D_{j}\left(f_{s_{1}},\ldots,f_{s_{j}}\right),
				f_{s_{j+1}},\ldots,f_{s_{k-1}}\right) & \\
&=-\textstyle\sum_{s\in Sh\left(1,k-2\right)}e\left(s;f_1,\ldots,f_k\right) 
	X_{D_{k-1}}\left(D_1f_{s_1},\ldots ,f_{k-1}\right)+\xi &		
\end{flalign*}
\end{prop}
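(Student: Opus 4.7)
The plan is to derive the claim by applying the first fundamental pairing to the strong homotopy Jacobi equation in dimension $(k-1)$, which holds by the induction hypothesis of theorem (\ref{main_theorem}). That equation can be written
\[
\sum_{i+j=k}\sum_{s\in Sh(j,i-1)} e(s;f_1,\ldots,f_{k-1})\, D_i\!\left(D_j(f_{s_1},\ldots,f_{s_j}),\, f_{s_{j+1}},\ldots,f_{s_{k-1}}\right) = 0.
\]

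First I would apply the $n$-plectic differential $D_1=-d$ to both sides. Since every Hamiltonian form $F$ satisfies $D_1F=i_{X_F}\omega$ by the first fundamental equation, and contraction is linear in the multivector-field argument, this yields $i_\Xi\omega=0$, where
\[
\Xi := \sum_{i+j=k}\sum_{s\in Sh(j,i-1)} e(s;f_1,\ldots,f_{k-1})\, X_{D_i}\!\left(D_j(f_{s_1},\ldots,f_{s_j}),\, f_{s_{j+1}},\ldots,f_{s_{k-1}}\right).
\]
Hence $\Xi\in\ker(\omega)$.

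Next I would split $\Xi$ into three contributions by isolating $i=1$, $j=1$, and $i,j>1$. When $i=1$, forcing $j=k-1$, the single summand is an associated semi-Hamiltonian multivector field of $D_1 D_{k-1}(\ldots) = -d D_{k-1}(\ldots)$, which is closed; by the first fundamental equation any such choice lies in $\ker(\omega)$ and can be absorbed into $\xi$. When $j=1$, the summands are precisely $X_{D_{k-1}}(D_1 f_{s_1}, f_{s_2},\ldots,f_{s_{k-1}})$ indexed by $s\in Sh(1,k-2)$, forming the shuffle sum on the right. The remaining summands, with $i,j>1$, are exactly the left side of the claim. Rearranging the identity $\Xi = \xi \in \ker(\omega)$ then produces the desired equation.

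The main technical obstacle is bookkeeping: matching shuffle sums and Koszul signs carefully, and verifying that the identity is independent of the choice of representative of each $X_{D_i(\ldots)}$. The latter ambiguity is controlled by the kernel property (\ref{kernel_prop}), which allows us to replace representatives modulo $\ker(\omega)$ at the cost of adjusting $\xi$ by a kernel element; this is also why the $i=1$ contribution can be harmlessly absorbed.
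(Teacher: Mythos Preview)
Your proposal is correct and follows essentially the same approach as the paper. The paper applies $D_1$ to the strong homotopy Jacobi equation in dimension $(k-1)$, uses $D_1D_1=0$ to drop the $i=1$ term immediately, then invokes the first fundamental pairing and proposition~(\ref{kernel_prop}) to conclude that the two multivector-field sums differ by an element of $\ker(\omega)$; you instead convert the entire identity to multivector fields first and then absorb the $i=1$ contribution into $\xi$, which amounts to the same argument in a slightly different order.
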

\begin{proof}
Apply the n-plectic differential $D_1$ to the strong homotopy 
Jacobi equation in dimension $\left(k-1\right)$. Since $D_1D_1=0$ we get 
\begin{align*}
&\textstyle\sum_{i+j=k}^{j>1,i>1}
	\sum_{s\in Sh(j,i-1)}e\left(s;f_1,\ldots,f_k\right)
		D_1D_{i}\left(D_{j}\left(f_{s_{1}},\cdots,f_{s_{j}}\right),
			f_{s_{j+1}},\cdots,f_{s_{k-1}}\right)\\
&=- \textstyle\sum_{s\in Sh(1,k-2)}e\left(s;f_1,\ldots,f_k\right)
	D_1D_{k-1}\left(D_1f_{s_{1}},\ldots ,f_{s_{k-1}}\right)
\end{align*}
and using the fundamental pairing this transforms into
\begin{align*}
&\textstyle\sum_{i+j=k}^{j>1,i>1}
	\sum_{s\in Sh(j,i-1)}e\left(s;f_1,\ldots,f_k\right)
		i_{X_{D_{i}}\left(D_{j}\left(f_{s_{1}},\cdots,f_{s_{j}}\right),
			f_{s_{j+1}},\cdots,f_{s_{k-1}}\right)}\omega \\
&=- \textstyle\sum_{s\in Sh(1,k-2)}e\left(s;f_1,\ldots,f_k\right)
	i_{X_{D_{k-1}}\left(D_1f_{s_{1}},\ldots ,f_{s_{k-1}}\right)}\omega
\end{align*}
By proposition (\ref{kernel_prop}), the multivector fields on both sides of the equation 
only differ by an element of the kernel of $\omega$.
\end{proof}
Now we have all we need to calculate the strong homotopy Jacobi equation in dimension
$k$. Again, since the definition of the differential $D_1$ and the sh-Lie 2-bracket 
is different from the general situation, we separate appropriate terms according to:
\begin{align*}
&\textstyle{\sum_{i+j= k+1}}\left( \sum_{s\in Sh(j,i-1)}e\left(s;f_1,\ldots,f_k\right)	
		D_{i}\left(D_{j}\left(f_{s_{1}},\ldots,f_{s_{j}}\right),f_{s_{j+1}},\ldots,f_{s_{k}}
			\right)\right)&\\
&= D_{1}D_{k}\left(f_{1},\ldots,f_{k}\right)\\
&\quad+ \textstyle\sum_{s\in Sh(1,k-1)}e\left(s;f_1,\ldots,f_k\right)
	D_{k}\left(D_{1}f_{s_{1}},f_{s_{2}},\ldots,f_{s_{k}}\right)\\
&\quad+ \textstyle\sum_{s\in Sh(k-1,1)}e\left(s;f_1,\ldots,f_k\right)
	D_{2}\left(D_{k-1}\left(f_{s_{1}},\ldots,f_{s_{k-1}}\right),f_{s_{k}}\right)\\
&\quad+\textstyle\sum_{s\in Sh(2,k-2)}e\left(s;f_1,\ldots,f_k\right)
	D_{k-1}\left(D_{2}\left(f_{s_{1}},f_{s_{2}}\right),f_{s_{3}},
		\ldots,f_{s_{k}}\right)\\
&\quad+ \textstyle\sum_{i+j=k+1}^{i,j>2}\left( \sum_{s\in Sh(j,i-1)}
		e\left(s;f_1\ldots,f_k\right)	
			D_{i}\left(D_{j}(f_{s_{1}},...,f_{s_{j}}),f_{s_{j+1}},...,f_{s_{k}}
				\right)\right)\;.
\intertext{Applying the definition of $D_1$ and $D_k$ and using the previously calculated
expressions accordingly we rewrite this into}
&-\textstyle\sum_{s\in S(k-1,1)}e\left(s;f_1,\ldots,f_k\right)
	e\left(f_{s_{1}}\right)\cdots e\left(f_{s_{k-1}}\right)
		D_1i_{X_{D_{k-1}}\left(f_{s_{1}},\ldots,f_{s_{k-1}}\right)}f_{s_{k}}\\
&\quad+\textstyle\sum_{s\in Sh(1,k-2,1)}e\left(s;f_1,\ldots,f_k\right)
	e\left(f_{s_1}\right)\cdots e\left(f_{s_{k-1}}\right)
		i_{X_{D_{k-1}}\left(D_1f_{s_{1}},\ldots,f_{s_{k-1}}\right)}f_{s_{k}}\\
&\quad- \textstyle\sum_{s\in Sh(k-1,1)}e\left(s;f_1,\ldots,f_k\right)
			i_{X_{D_{k-1}}\left(f_{s_{1}},\ldots,f_{s_{k-1}}\right)}D_1f_{s_{k}}\\	
&\quad-\textstyle\sum_{s\in Sh(k-1,1)}e\left(s;f_1,\ldots,f_k\right)
	e\left(f_{s_{1}}\right)\cdots e\left(f_{s_{k-1}}\right)
		L_{X_{D_{k-1}}\left(f_{s_{1}},\cdots,f_{s_{k-1}}\right)}f_{s_{k}}\\
&\quad+ \textstyle\frac{1}{2}\sum_{s\in Sh(k-2,1,1)}e\left(s;f_1,\ldots,f_k\right)
	e\left(f_{s_{1}}\right)\cdots e\left(f_{s_{k-1}}\right)
		i_{X_{D_{2}}\left(D_{k-2}\left(f_{s_{1}},\dots,f_{s_{k-2}}\right),
		 f_{s_{k-1}}\right)}f_{s_{k}}\\
&\quad+\textstyle\sum_{s\in Sh(2,k-3,1)}e\left(s;f_1,\ldots,f_k\right)
	e\left(f_{s_{1}}\right)\cdots e\left(f_{s_{k-1}}\right)
		i_{X_{D_{k-2}}\left(D_{2}\left(f_{s_{1}},f_{s_{2}}\right),
			f_{s_{3}},\cdots,f_{s_{k-1}}\right)}f_{s_{k}}\\
&\quad+\textstyle\sum_{i+j=k+1}^{j>2,i>3}
	\sum_{s\in Sh(j,i-2,1)}e\left(s;f_1,\ldots,f_k\right)
		e\left(f_{s_{1}}\right)\cdots e\left(f_{s_{k-1}}\right)\\
&\quad\cdot i_{X_{D_{i-1}}\left(D_{j}\left(f_{s_{1}},\cdots,f_{s_{j}}\right),
				f_{s_{j+1}},\cdots,f_{s_{k-1}}\right)}f_{s_{k}}\\
&\quad+\textstyle\frac{1}{2}\sum_{s\in Sh(k-2,1,1)}e\left(s;f_1,\ldots,f_k\right)
	e\left(f_{s_{1}}\right)\cdots e\left(f_{s_{k-1}}\right)
		i_{X_{D_{2}}\left(D_{k-2}\left(f_{s_{1}},\ldots,f_{s_{k-2}}\right),
			f_{s_{k-1}}\right)}f_{s_{k}}
\intertext{where we already used the vanishing of equation (\ref{my_equation_3}). In
the next step we collect terms and substitute $l:=i-1$ to get}
&\textstyle\sum_{s\in S(k-1,1)}e\left(s;f_1,\ldots,f_k\right)
	e\left(f_{s_{1}}\right)\cdots e\left(f_{s_{k-1}}\right)\\
&\quad\cdot\textstyle\left(
	di_{X_{D_{k-1}}\left(f_{s_{1}},\ldots,f_{s_{k-1}}\right)}f_{s_{k}}	
		+ e\left(f_{s_{1}}\right)\cdots e\left(f_{s_{k-1}}\right)
			i_{X_{D_{k-1}}\left(f_{s_{1}},\ldots,f_{s_{k-1}}\right)}df_{s_{k}}\right)\\
&\quad+\textstyle\sum_{s\in Sh(1,k-2,1)}e\left(s;f_1,\ldots,f_k\right)
	e\left(f_{s_1}\right)\cdots e\left(f_{s_{k-1}}\right)
		i_{X_{D_{k-1}}\left(D_1f_{s_{1}},\ldots,f_{s_{k-1}}\right)}f_{s_{k}}\\
&\quad-\textstyle\sum_{s\in Sh(k-1,1)}e\left(s;f_1,\ldots,f_k\right)
	e\left(f_{s_{1}}\right)\cdots e\left(f_{s_{k-1}}\right)
		L_{X_{D_{k-1}}\left(f_{s_{1}},\cdots,f_{s_{k-1}}\right)}f_{s_{k}}\\	
&\quad+\textstyle\sum_{l+j=k}^{j>1,l>1}
	\sum_{s\in Sh(j,l-1,1)}e\left(s;f_1,\ldots,f_k\right)
		e\left(f_{s_{1}}\right)\cdots e\left(f_{s_{k-1}}\right)\\
&\quad\cdot i_{X_{D_{l}}\left(D_{j}\left(f_{s_{1}},\ldots,f_{s_{j}}\right),
				f_{s_{j+1}},\ldots,f_{s_{k-1}}\right)}f_{s_{k}}
\intertext{Now apply the definition of the Lie derivative to the first shuffle sum 
and the induction assumption together with proposition (\ref{_mvf_sh_J}) to 
the last term. Since each argument $f_j$ has the kernel property we get}
&=\textstyle\sum_{s\in S(k-1,1)}e\left(s;f_1,\ldots,f_k\right)
	e\left(f_{s_{1}}\right)\cdots e\left(f_{s_{k-1}}\right)
		L_{X_{D_{k-1}}\left(f_{s_{1}},\cdots,f_{s_{k-1}}\right)}f_{s_{k}}\\
&\quad+\textstyle\sum_{s\in Sh(1,k-2,1)}e\left(s;f_1,\ldots,f_k\right)
	e\left(f_{s_1}\right)\cdots e\left(f_{s_{k-1}}\right)
		i_{X_{D_{k-1}}\left(D_1f_{s_{1}},\ldots,f_{s_{k-1}}\right)}f_{s_{k}}\\
&\quad-\textstyle\sum_{s\in Sh(k-1,1)}e\left(s;f_1,\ldots,f_k\right)
	e\left(f_{s_{1}}\right)\cdots e\left(f_{s_{k-1}}\right)
		L_{X_{D_{k-1}}\left(f_{s_{1}},\cdots,f_{s_{k-1}}\right)}f_{s_{k}}\\	
&\quad-\textstyle\sum_{s\in Sh(1,k-2,1)}e\left(s;f_1,\ldots,f_k\right)
	e\left(f_{s_1}\right)\cdots e\left(f_{s_{k-1}}\right)
		i_{X_{D_{k-1}}\left(D_1f_{s_{1}},\ldots,f_{s_{k-1}}\right)}f_{s_{k}}
\end{align*}
and consequently the strong homotopy Jacobi equation vanishes in dimension $k$
for $k\neq 4$. 

The situation $k=4$ is just a long but straight forward computation. To see that the expression
\begin{multline*}
D_{1}D_{4}\left(f_{1},\ldots,f_{4}\right)
+ \textstyle\sum_{s\in Sh(1,3)}e\left(s;f_1,\ldots,f_4\right)
	D_{k}\left(D_{1}f_{s_{1}},f_{s_{2}},f_{s_{3}},f_{s_{4}}\right)\\
+ \textstyle\sum_{s\in Sh(3,1)}e\left(s;f_1,\ldots,f_4\right)
	D_{2}\left(D_{3}\left(f_{s_{1}},f_{s_{2}},f_{s_{3}}\right),f_{s_{4}}\right)\\
+\textstyle\sum_{s\in Sh(2,2)}e\left(s;f_1,\ldots,f_4\right)
	D_{3}\left(D_{2}\left(f_{s_{1}},f_{s_{2}}\right),f_{s_{3}},f_{s_{4}}\right)
\end{multline*}
vanishes, applying the definition of any sh-Lie $k$-bracket $D_k$ involved and 
proceed similar as in the proof of equation (\ref{my_equation_1}) to get
\begin{align*}
&\textstyle\sum_{s\in Sh(3,1)}e(s;f_1,\ldots,f_4)e(f_{s_1})e(f_{s_2})e(f_{s_3})
	di_{X_{D_3}\left(f_{s_1},f_{s_2},f_{s_3}\right)}f_{s_4}\\
&-\textstyle\sum_{s\in Sh(1,2,1)}e\left(s;f_1,\ldots,f_4\right)
	e(D_1f_{s_1})e(f_{s_2})e(f_{s_3})i_{X_{D_3}\left(D_1f_{s_1},f_{s_2},f_{s_3}\right)}f_{s_4}\\
&-\textstyle\sum_{s\in Sh(3,1)}e\left(s;f_1,\ldots,f_4\right)
	i_{X_{D_3}\left(f_{s_1},f_{s_2},f_{s_3}\right)}D_1f_{s_4}\\
&-\textstyle\sum_{s\in Sh(3,1)}e\left(s;f_1,\ldots,f_4\right)e(f_{s_1})e(f_{s_2})e(f_{s_3})
	L_{X_{D_{3}}\left(f_{s_{1}},f_{s_{2}},f_{s_{3}}\right)}f_{s_{4}}\\
&+\textstyle\sum_{s\in Sh(1,3)}e\left(s;f_1,\ldots,f_4\right)
	L_{X_{s_1}}D_{3}\left(f_{s_{2}},f_{s_{3}},f_{s_{4}}\right)\\	
&+\textstyle\frac{1}{2}\sum_{s\in Sh(2,1,1)}e\left(s;f_1,\ldots,f_4\right)e(f_{s_1})e(f_{s_2})e(f_{s_3})
	i_{X_{D_2}\left(D_{2}\left(f_{s_{1}},f_{s_{2}}\right),f_{s_{3}}\right)}f_{s_{4}}\\
&-\textstyle\frac{1}{2}\sum_{s\in Sh(2,2)}e\left(s;f_1,\ldots,f_4\right)
	i_{X_{D_2}\left(f_{s_{1}},f_{s_{2}}\right)}D_2\left(f_{s_{3}},f_{s_{4}}\right)\\
\intertext{from splitting the shuffle sum into parts that fixes te first argument and a
single term, where the first argument is suffled to the last position. Then rewrite like}
&\textstyle\sum_{s\in Sh(3,1)}e(s;f_1,\ldots,f_4)e(f_{s_1})e(f_{s_2})e(f_{s_3})\\
&\quad\cdot\left(
	di_{X_{D_3}\left(f_{s_1},f_{s_2},f_{s_3}\right)}f_{s_4}
		+e(f_{s_1})e(f_{s_2})e(f_{s_3})
			i_{X_{D_3}\left(f_{s_1},f_{s_2},f_{s_3}\right)}df_{s_4}\right)\\
&+\textstyle\sum_{s\in Sh(1,2,1)}e\left(s;f_1,\ldots,f_4\right)
	e(f_{s_1})e(f_{s_2})e(f_{s_3})i_{X_{D_3}\left(D_1f_{s_1},f_{s_2},f_{s_3}\right)}f_{s_4}\\
&-\textstyle\sum_{s\in Sh(3,1)}e\left(s;f_1,\ldots,f_4\right)e(f_{s_1})e(f_{s_2})e(f_{s_3})
	L_{X_{D_{3}}\left(f_{s_{1}},f_{s_{2}},f_{s_{3}}\right)}f_{s_{4}}\\
&-\textstyle\frac{1}{2}\sum_{s\in Sh(1,2,1)}e\left(s;f_1,\ldots,f_4\right)e(f_{s_2})e(f_{s_3})
	L_{X_{s_1}}i_{X_{D_{2}}\left(f_{s_{2}},f_{s_{3}}\right)}f_{s_{4}}\\	
&+\textstyle\frac{1}{2}\sum_{s\in Sh(2,1,1)}e\left(s;f_1,\ldots,f_4\right)e(f_{s_1})
	e(f_{s_2})e(f_{s_3})i_{X_{D_2}\left(D_{2}\left(f_{s_{1}},f_{s_{2}}\right),f_{s_{3}}\right)}
		f_{s_{4}}\\
&-\textstyle\frac{1}{2}\sum_{s\in Sh(2,1,1)}e\left(s;f_1,\ldots,f_4\right)e(f_{s_3})
	i_{X_{D_2}\left(f_{s_{1}},f_{s_{2}}\right)}L_{X_{s_{3}}}f_{s_{4}}
\intertext{and collect terms using (\ref{multi_rules}) to arrive at}
&\textstyle\sum_{s\in Sh(3,1)}e(s;f_1,\ldots,f_4)e(f_{s_1})e(f_{s_2})e(f_{s_3})
	L_{X_{D_3}\left(f_{s_1},f_{s_2},f_{s_3}\right)}f_{s_4}\\
&+\textstyle\sum_{s\in Sh(1,2,1)}e\left(s;f_1,\ldots,f_4\right)
	e(f_{s_1})e(f_{s_2})e(f_{s_3})i_{X_{D_3}\left(D_1f_{s_1},f_{s_2},f_{s_3}\right)}f_{s_4}\\
&-\textstyle\sum_{s\in Sh(3,1)}e\left(s;f_1,\ldots,f_4\right)e(f_{s_1})e(f_{s_2})e(f_{s_3})
	L_{X_{D_{3}}\left(f_{s_{1}},f_{s_{2}},f_{s_{3}}\right)}f_{s_{4}}\\
&\textstyle\frac{1}{2}\sum_{s\in Sh(2,1,1)}e\left(s;f_1,\ldots,f_4\right)e(f_{s_3})\left(
	-e(f_{s_1},f_{s_3})e(f_{s_2},f_{s_3})e(f_{s_1})e(f_{s_2})e(f_{s_3})
		L_{X_{s_3}}i_{X_{D_{2}}\left(f_{s_{1}},f_{s_{2}}\right)}f_{s_{4}}\right)\\	
&+\textstyle\frac{1}{2}\sum_{s\in Sh(2,1,1)}e\left(s;f_1,\ldots,f_4\right)
	e(f_{s_1})e(f_{s_2})e(f_{s_3})i_{X_{D_2}\left(
		D_{2}\left(f_{s_{1}},f_{s_{2}}\right),f_{s_{3}}\right)}f_{s_{4}}\\
&-\textstyle\frac{1}{2}\sum_{s\in Sh(2,1,1)}e\left(s;f_1,\ldots,f_4\right)e(f_{s_3})
	i_{X_{D_2}\left(f_{s_{1}},f_{s_{2}}\right)}L_{X_{s_{3}}}f_{s_{4}}\\
&=\textstyle\sum_{s\in Sh(1,2,1)}e\left(s;f_1,\ldots,f_4\right)
	e(f_{s_1})e(f_{s_2})e(f_{s_3})i_{X_{D_3}\left(D_1f_{s_1},f_{s_2},f_{s_3}\right)}f_{s_4}\\
&\textstyle\frac{1}{2}\sum_{s\in Sh(2,1,1)}e\left(s;f_1,\ldots,f_4\right)e(f_{s_3})
		i_{[X_{s_3},X_{D_{2}}\left(f_{s_{1}},f_{s_{2}}\right)]}f_{s_{4}}\\	
&+\textstyle\frac{1}{2}\sum_{s\in Sh(2,1,1)}e\left(s;f_1,\ldots,f_4\right)e(f_{s_1})e(f_{s_2})e(f_{s_3})
	i_{X_{D_2}\left(D_{2}\left(f_{s_{1}},f_{s_{2}}\right),f_{s_{3}}\right)}f_{s_{4}}\\
&=\textstyle\sum_{s\in Sh(1,2,1)}e\left(s;f_1,\ldots,f_4\right)
	e(f_{s_1})e(f_{s_2})e(f_{s_3})i_{
		X_{D_3}\left(D_1f_{s_1},f_{s_2},f_{s_3}\right)}f_{s_4}\\	
&+\textstyle\frac{3}{4}\sum_{s\in Sh(2,1,1)}e\left(s;f_1,\ldots,f_4\right)e(f_{s_1})
	e(f_{s_2})e(f_{s_3})i_{X_{D_2}\left(
		D_{2}\left(f_{s_{1}},f_{s_{2}}\right),f_{s_{3}}\right)}f_{s_{4}}\;.
\end{align*}
Applying the explicit expressions (\ref{ass_sHam_3}) and (\ref{well_def_2}) for the
multivector fields $X_{D_3}$ and $X_{D_2}$, using $Y_{D_1f_j}=X_{j}$, we transform this into
\begin{multline*}
\textstyle\sum_{s\in Sh(1,2,1)}e\left(s;f_1,\ldots,f_4\right)
	e(f_{s_1})e(f_{s_3})
		i_{\left[\left[X_{s_3},X_{s_2}\right],X_{s_1}\right]
		+X_{s_3}\wedge X_{D_2}\left(D_1f_{s_1},f_{s_2}\right)}f_{s_4}\\	
-\textstyle 3\sum_{s\in Sh(2,1,1)}e\left(s;f_1,\ldots,f_4\right)
	e(f_{s_1})e(f_{s_3})i_{[X_{s_{3}},[X_{s_{2}},X_{s_{1}}]]}f_{s_{4}}
\end{multline*}
The former sum over suffles vanishes due to the Jacobi identity of the Schouten
bracket and since the multivector field $X_{D_2}$ vanishes if an
argument is a closed form. The latter expression vanishes due to 
the Jacobi identity of the Schouten bracket.
This completes the proof of the strong homotopy Jacobi equation.
\section{Calculus of Differential Forms and Multivector Fields}\label{AP_2}
\subsection{Shuffle Permutation}
Let $S_k$ be the symmetric group, i.e the group of all bijective maps 
of the ordinal $\ordinal{k}$.
\begin{definition}[Shuffle Permutation] For any $p,q\in \N$
a $(p,q)$-shuffle is a permutation 
$(\mu_1,\ldots,\mu_p,\nu_1,\ldots,\nu_q)\in S_{p+q}$
subject to the condition $\mu_1<\ldots<\mu_p$ and $\nu_1<\ldots<\nu_p$.
We write $Sh(p,q)$ for the set of all $(p,q)$-shuffles.
\end{definition}
For more on shuffles, see for example at \cite{RS}.
\subsection{Graded Vector Spaces}We recall the most basics facts
about graded vector spaces. 

A $\Z$-graded $\mathbb{K}$-vector space $V$ is the direct sum 
$\oplus_{n\in \Z} V_n$ of $\mathbb{K}$-vector spaces $V_n$. The elements of $V_n$ are said to be \textbf{homogeneous} of degree $n$. Obviously every vector has a decomposition 
into homogeneous elements. When the degree of a vector $v \in V$  is well defined, i.e.
when the vector is homogeneous we denote it by $deg(v)$ (or by $|v|$ if we have are
dealing with more than one grading). In what follows we assume all vector space to be defined over $\R$ and consequently we just write vector space instead of $\R$-vector 
space.

A morphism $f : V \to W$ of graded vector spaces is a sequence of 
linear maps $f_n : V_n \to W_{n+r}$ for all $n\in \Z$. 
The integer $r$ is called the degree of $f$ and is as well denoted by $deg(f)$ (or $|f|$). 

A $k$-linear morphism $f: V_1 \times ... \times V_k \to W$ 
of graded vector spaces is a sequence of $k$-linear maps 
$f_{n_1,\ldots,n_k} : (V_1)_{n_1} \times \ldots \times (V_k)_{n_k} \to W_{\sum n_i+r}$ for all $n_i\in \Z$.

The (graded )tensor product $V \otimes W$ of two graded $\mathbb{K}$-vector spaces
$V$ and $W$ is given by
$$
(V \otimes W )_n :=\oplus_{i+j=n}\left( V_i \otimes W_j\right)
$$
and the twisting morphism by
$\tau: V \otimes W \to  W \otimes V$ on homogeneous elements $v\otimes w \in V \otimes W$
by 
$$\tau(v \otimes w):=(-1)^{deg(v)deg(w)} w \otimes v$$ and then extended to
$V\otimes W$ by linearity.

According to a better readable text we define $e(v):=(-1)^{deg(v)}$ as 
well as $e(v,w):=(-1)^{deg(v)deg(w)}$. For any permutation $s\in S_k$ and
any homogeneous vectors $v_1,\ldots,v_k\in V$ we define the \textbf{Koszul sign}
$e(s;v_1,\ldots,v_k) \in \{-1,+1\}$ by 
\begin{equation}
v_1\otimes \ldots \otimes v_k= e(s;v_1,\ldots,v_k) v_{s_1}\otimes \ldots \otimes v_{s_k}.
\end{equation}
\begin{remark}
In an actual computation the Koszul sign can be determined by the following rules: 
If a permutation $s\in S_k$ interchanges $j$ and $j+1$, 
then $e(s;v_1,\ldots,v_k)= (-1)^{deg(v_j)\cdot deg(v_{+1})}$. 
If $t\in S_k$ is another permutation, then
$e(ts;v_1,\ldots,v_k)=e(t;v_{s_1},\ldots,v_{s_k})e(s;v_1,\ldots,v_k)$.
\end{remark}
A $k$-linear morphism $ f: \bigtimes^k V \to W$ is called 
\textbf{graded symmetric} if 
$$f(v_1,\ldots,v_k) = e(s;v_1,\ldots,v_k)f(v_{s(1)},\ldots,v_{s(k)})$$
for all $s\in S_k$.

\subsection{Calculus on Multivector Fields} For a comprehensive definition of
multivector fields and the Schouten bracket see for example \cite{MM} or \cite{KMS}.

Let $M$ be a smooth manifold.
A multivector field $X$ of tensor degree $r$ is a section 
of the $r$-th exterior power $\bwedge{r}TM$ of the tangent bundle. 
We write $\mathfrak{X}(M)$ for the set of all multivector fields. 

The \textbf{Schouten bracket} is a graded antisymmetric, 
natural $\mathbb{R}$-bilinear operator
(in the sense of \cite{KMS})
\begin{equation}
[.,.]:\mathfrak{X}M \times \mathfrak{X}M \to \mathfrak{X}M\; ,
\end{equation}
homogeneous of tensor degree $-1$, that satisfies the graded Leibniz rule 
\begin{equation}
[X,Y \smwedge Z] = [X,Y]\smwedge Z + (-1)^{(|\,X\,|-1)|\,Y\,|} Y \smwedge [X,Z]\;,
\end{equation}
as well as the graded Jacobi identity
\begin{equation}
\textstyle\sum_{s\in Sh(2,1)}e(s;X_1,X_2,X_3)[[X_{s_1},X_{s_2}],X_{s_3}]=0\;.
\end{equation}
Moreover it coincides with the standard Lie bracket on vector fields.

If $\alpha \in \Omega(M)$ is a differential form and $X\in \mathfrak{X}M$ a 
multivector field of tensor degree $r$, the \textbf{contraction} $i_X\alpha$
of $\alpha$ along $X$, is defined for decomposable multivector fields
$X_1\smwedge \ldots \smwedge X_r $ by repeated contraction
\begin{equation}\label{contract}
 i_{X_1 \wedge \ldots \wedge X_r} \alpha
 =i_{X_r} \ldots i_{X_1} \alpha
\end{equation}
and is then extended to arbitrary multivector fields $X$ by linearity. The 
\textbf{Lie derivative} $L_X \alpha$ along $X$ is defined in analogy to the Cartan formula for vector fields, 
as the graded commutator of the exterior derivative
$d$ and the contraction operator $i_X$ according to:
\begin{equation}\label{def_lie}
  L_X \alpha = d\left(i_X\alpha\right) - (-1)^r i_X d \alpha.
\end{equation}

\begin{prop}\label{multi_rules}
For multivector fields $X$ of tensor degree $r$ and $Y$ of tensor degree $s$,
the equations 
\begin{equation}
\begin{array}{lcl}
dL_X^{} \alpha &=&(-1)^{r-1}L_X d\alpha \\
i_{[X,Y]} \alpha &=&(-1)^{(r-1)s}L_X i_Y^{} \alpha-i_Y L_X \alpha\\
L_{[X,Y]} \alpha &=& (-1)^{(r-1)(s-1)}L_X L_Y \alpha-L_Y L_X \alpha\\
L_{X\wedge Y} \alpha &=& (-1)^{s} i_Y L_X \alpha + L_Y i_X \alpha 
\end{array}
\end{equation}
are satisfied for any $\alpha\in \Omega(M)$.
\end{prop}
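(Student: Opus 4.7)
The plan is to treat $d$ and each contraction $i_X$ as graded endomorphisms of $\Omega(M)$ of degrees $+1$ and $-r$ respectively, and to deduce all four identities from the defining relation $L_X=d\,i_X-(-1)^r i_X\,d$, together with $d^2=0$, the composition rule $i_{X\wedge Y}=i_Y i_X$, and the graded Leibniz rule of the Schouten bracket. Reducing by $\R$-bilinearity to decomposable wedges of vector fields, each identity bottoms out on the classical Cartan formula and the ordinary Lie-bracket Jacobi identity, which can be assumed.

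The first identity is essentially a one-line check: substituting the definition gives $dL_X\alpha=-(-1)^r d\,i_X d\alpha$ and $L_X d\alpha=d\,i_X d\alpha$, since $d^2=0$ kills the other term in each expression; the two sides differ precisely by $(-1)^{r-1}$. The fourth identity I would prove by direct expansion: writing $L_{X\wedge Y}\alpha=d\,i_Y i_X\alpha-(-1)^{r+s}i_Y i_X d\alpha$ and expanding the claimed right-hand side $(-1)^s i_Y L_X\alpha+L_Y i_X\alpha$ via the definition of $L$, the cross-terms containing $i_Y d\,i_X\alpha$ cancel and the remaining terms match term for term. The second identity is the real work: I would prove it by induction on $|Y|$, with base case $|Y|=1$ given by the graded Cartan identity $[L_X,i_Y]=i_{[X,Y]}$, handled by a secondary induction on $|X|$. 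For $|X|=1$ it is classical; the inductive step writes $X=X_1\wedge X'$ and combines the Schouten Leibniz rule $[X_1\wedge X',Y]=X_1\wedge[X',Y]\pm[X_1,Y]\wedge X'$ with the fourth identity applied to $L_{X_1\wedge X'}$, matching both sides after a sign count. The outer induction from $|Y|$ to $|Y\wedge Z|$ uses $i_{Y\wedge Z}=i_Z i_Y$ together with the Schouten Leibniz rule in the second slot. The third identity then follows by applying $d$ to the second identity and using the first to commute $d$ past $L_X$; after rearrangement the left-hand side becomes $(d\,i_{[X,Y]}-(-1)^{r+s-1}i_{[X,Y]}d)\alpha=L_{[X,Y]}\alpha$ and the right-hand side recombines into $(-1)^{(r-1)(s-1)}L_X L_Y\alpha-L_Y L_X\alpha$.

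The main obstacle is sign bookkeeping in the second identity, where the degrees $r$ and $s$ enter through four separate mechanisms: the definition of $L$, the graded Schouten Leibniz rule, the order-reversal in $i_{X\wedge Z}=i_Z i_X$, and the signs of graded commutators. To keep this tractable I would phrase every intermediate step as an operator identity on $\Omega(M)$, so that the degree of $\alpha$ never enters the sign ledger; reduce by bilinearity to decomposable $X$ and $Y$, so both inductions terminate at length one; and arrange the base case to rest only on the classical Cartan formula $[L_V,i_W]=i_{[V,W]}$ for vector fields and on $d^2=0$.
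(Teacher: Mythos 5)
Your proposal is correct in outline, but it is worth noting that the paper does not actually prove this proposition at all: its ``proof'' is the single line ``See for example \cite{FPR}.'' So you are supplying the argument that the paper outsources to the literature, and the route you take is essentially the standard one found in \cite{FPR} and in Marle \cite{MM}: treat everything as operator identities on $\Omega(M)$, use $L_X = d\,i_X - (-1)^r i_X d$, $d^2=0$, and $i_{X\wedge Y}=i_Y i_X$, and induct on the lengths of decomposable multivector fields via the Schouten--Leibniz rule. The parts of your sketch that can be verified directly all check out: the first identity is the one-line $d^2=0$ computation you describe; the fourth follows by the expansion you indicate, with the two copies of $(-1)^s i_Y d\,i_X\alpha$ cancelling; and the derivation of the third from the first two is a correct sign chase (the key congruences are $(r-1)s+(r-1)\equiv (r-1)(s-1)$ and $(r+s-1)+(r-1)s \equiv (r-1)(s-1)+s+1 \pmod 2$). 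The only place where your proposal remains a plan rather than a proof is the second identity, where the double induction is structurally right --- base case $[L_V,i_W]=i_{[V,W]}$ for vector fields, inner induction on $|X|$ via $L_{X_1\wedge X'}$ and the Leibniz rule, outer induction on $|Y|$ via $i_{Y\wedge Z}=i_Z i_Y$ --- but the ``matching both sides after a sign count'' is exactly where all the difficulty lives, and you have not exhibited that count. Since that is also precisely the computation carried out in the cited references, your approach buys a self-contained proof at the cost of the bookkeeping the paper chose to avoid.
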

\begin{proof}
See for example \cite{FPR}.
\end{proof}
\begin{definition}
Let $f : M \to N$ be a smooth map. Two r-multivector fields 
$X\in \mathfrak{X}M$ and $Y\in \mathfrak{X}N$ are called 
\textbf{f-related}, if 
$$\wedge^r\, Tf\circ X = Y\circ f\;.$$
\end{definition}
If $f:M \to N$ is a diffeomorpism and $f^*Y$ is the pullback of a multivector field
$Y\in \mathfrak{X}N$, then $f^*Y$ and $Y$ are $f$-related.
\begin{prop}
If $f : M \to N$ be a smooth map and $X\in \mathfrak{X}M$ and $Y\in \mathfrak{X}N$ are
$f$-related multivector fields, then $i_X \circ f^* = f^* \circ i_Y$.
\end{prop}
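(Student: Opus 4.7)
The plan is to reduce this to a pointwise linear-algebra identity and then verify it by induction on the tensor degree, using the iterated-contraction formula (\ref{contract}). At a point $m \in M$ with $\alpha\in \Omega N$, both sides unfold as
$$
(i_X f^*\alpha)_m = i_{X_m}\bigl((T_mf)^*\alpha_{f(m)}\bigr), \qquad (f^* i_Y\alpha)_m = (T_mf)^*\bigl(i_{Y_{f(m)}}\alpha_{f(m)}\bigr),
$$
and the $f$-relatedness hypothesis yields $\bwedge{r}T_mf(X_m) = Y_{f(m)}$. So it suffices to establish the following purely linear statement: for any linear map $L\colon V \to W$, any $\xi \in \bwedge{r}V$, and any $\beta\in \bwedge{k}W^*$ with $k\geq r$,
\begin{equation*}
i_\xi(L^*\beta) \;=\; L^*\bigl(i_{\bwedge{r}L(\xi)}\beta\bigr).
\end{equation*}

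First I would handle the base case $r=1$. For a single vector $v\in V$, both $i_v(L^*\beta)$ and $L^*(i_{Lv}\beta)$ evaluate on vectors $u_1,\ldots,u_{k-1}\in V$ to $\beta(Lv, Lu_1,\ldots, Lu_{k-1})$, so the identity follows directly from the definitions of pullback and contraction. Second, for the inductive step I would use bilinearity to reduce to the case of a decomposable element $\xi = v \smwedge \xi'$ with $v\in V$ and $\xi'\in \bwedge{r-1}V$, and then compute
\begin{align*}
i_{v\smwedge \xi'}(L^*\beta) &= i_{\xi'}\,i_v(L^*\beta) \\
&= i_{\xi'}\,L^*(i_{Lv}\beta) \\
&= L^*\bigl(i_{\bwedge{r-1}L(\xi')}\,i_{Lv}\beta\bigr) \\
&= L^*\bigl(i_{Lv\,\smwedge\,\bwedge{r-1}L(\xi')}\beta\bigr),
\end{align*}
using (\ref{contract}), the base case, and the inductive hypothesis. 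Since $Lv \smwedge \bwedge{r-1}L(\xi') = \bwedge{r}L(v\smwedge \xi')$, this is exactly what we want. Assembling the pointwise identity back over $M$ gives $i_Xf^*\alpha = f^*i_Y\alpha$.

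The main obstacle, which is really just bookkeeping, is justifying the reduction to decomposable elements: $\bwedge{r}V$ is spanned by but not equal to its decomposable cone, so one must invoke linearity of both sides in $\xi$ and of the map $\xi \mapsto \bwedge{r}L(\xi)$ before inducting on decomposable representatives. Once that is noted, the rest is purely formal and follows from iterated application of the single-vector case combined with the iterated-contraction formula (\ref{contract}).
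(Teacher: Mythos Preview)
Your argument is correct. The reduction to a pointwise linear-algebra identity, the base case $r=1$, and the inductive step via the iterated-contraction formula (\ref{contract}) all go through; your use of $i_{v\smwedge\xi'}=i_{\xi'}i_v$ matches the paper's convention in (\ref{contract}), and the linearity remark covering non-decomposable multivectors is the right bookkeeping.

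There is nothing to compare against: the paper states this proposition in the appendix as part of the background calculus of multivector fields and gives no proof, treating it as standard (the surrounding material is referenced to \cite{MM}, \cite{KMS}, and \cite{FPR}). Your write-up supplies exactly the elementary verification one would expect here.
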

\begin{definition}[Kernel] Let $\alpha$ be differential form on a manifold $M$.
The \textbf{kernel} of $\alpha$ is the set
\begin{equation}
\ker(\alpha):=\left\{X \in \mathfrak{X}\left(M\right)\; |\; 
i_X\alpha = 0_{C^\I(M)}\right\}\;.
\end{equation}
of all multivector fields $X$ on $M$, such that the contraction of $\alpha$ along
$X$ vanishes.
\end{definition}
\end{appendix}
\newpage


\begin{thebibliography}{99}
\bibitem{AM}
 \textsc{R.~Abraham \& J.E.~Marsden}:
 \emph{Foundations of Mechanics, second edition, Benjamin/Cummings, Reading 1978}

\bibitem{FO1}
 \textsc{M. Forger and L.Gomes}:
 \emph{Polysymplectic Structures on Fiber Bundles,
 	Preprint RT-MAP-0702, IME-USP (2007), arXiv:0708.1586} 
 
\bibitem{FPR3}
 \textsc{M.~Forger, C.~Paufler \& H.~R\"omer}:
 \emph{A general construction of Poisson brackets on exact multisymplectic manifolds,
 Reports on Math. Phys. 51 (2003), 187, arXiv:math-ph/0208037v1}
 
\bibitem{FPR}
 \textsc{M.~Forger, C.~Paufler \& H.~R\"omer}:
 \emph{The Poisson Bracket for Poisson Forms in Multisymplectic Field Theory,
 Rev.\ Math.\ Phys. 15 (2003) 705-744; math-ph/0202043}

\bibitem{FPR2}
 \textsc{M. Forger, C. Paufler, H. R\"omer}:
 \emph{Hamiltonian Multivector Fields and Poisson Forms in Multisymplectic Field Theory,
 J. Math. Phys. 46 (2005) 112903, 29 pp.; math-ph/0407057}

\bibitem{FSR}
 \textsc{M.~Forger \& S.V.~Romero}:
 \emph{Covariant Poisson Brackets in Geometric Field Theory,
 Commun. Math. Phys. 256 (2005) 375-410; math-ph/0408008.s}

\bibitem{GO}
\textsc{M. J. Gotay}: 
 \emph{A multisymplectic framework for classical field theory and the calculus of 	
 				variations. I: Covariant Hamiltonian formalism,
 M. Francaviglia (ed.), Mechanics, analysis and geometry: 200 years after 
 					Lagrange. Amsterdam etc.: North-Holland (1991), 203-235} 

\bibitem{GO1}
\textsc{M.J. Gotay}: 
 \emph{A multisymplectic framework for classical field theory and the calculus of 
 				variations. II: Space + time decomposition,
 Differ. Geom. Appl. 1(4) (1991), 375-390} 
 
\bibitem {GIM}
 \textsc{M.J.~Gotay, J.~Isenberg \& J.E.~Marsden:}
 \emph{Momentum Maps and Classical Relativistic Fields I:Covariant Field Theory,
 arXiv:math-ph/0411032}.

\bibitem {KMS}
 \textsc{Ivan Kolar, Peter W. Michor, Jan Slovak}
 \emph{Natural Operations in Differential Geometry,
 Springer-Verlag}

\bibitem{LS}
 \textsc{T. Lada \& J. Stasheff}:
 \emph{Intoduction to sh Lie algebras for physicists}

\bibitem {LV}
 \textsc{Jean-Louis Loday, Bruno Vallette}
 \emph{Algebraic Operads,
 Grundlehren der mathematischen Wissenschaften, 
     Volume 346, Springer-Verlag (2012)}

\bibitem {MM}
 \textsc{Marle, Charles-Michel}
 \emph{The Schouten-Nijenhuis bracket and interior products,
 Journal of Geometry and Physics 23 (1997): 350–359}.

\bibitem {CR}
 \textsc{C.L. Rogers}:
 \emph{Higher Symplectic Geometry,
 Ph.D. thesis, Univ. of California, http://arxiv.org/abs/1106.4068}

\bibitem {RS}
 \textsc{Richard P. Stanley}:
 \emph{Enumerative Combinatorics,
 Cambridge University Press (2011),Vol 1, 2. edit}      
\end{thebibliography}
\end{document}